\def \de {\partial}
\def \N {\mathbb{N}}
\def \phi {\varphi}
\def \RNu {\mathbb{R}^{N+1}}
\def \RN {\mathbb{R}^N}
\def \R {\mathbb{R}}
\def \K {\mathscr{K}}
\def \G{\Gamma}
\newcommand{\paa}{z^a \de_z}
\def \So {\mathscr{S}}
\newcommand{\As}{(-\mathscr A)^s}
\newcommand{\sA}{\mathscr A}
\newcommand{\Ia}{\mathscr I_\alpha}
\newcommand{\rpp}{\rho_p(\sA)}
\newcommand{\Rn}{\mathbb R^n}
\newcommand{\p}{\partial}
\newcommand{\la}{\lambda}
\numberwithin{equation}{section}
\newcommand{\beq}{\begin{equation}}
\newcommand{\bea}[1]{\begin{array}{#1} }
\newcommand{\eeq}{ \end{equation}}
\newcommand{\ea}{ \end{array}}
\newcommand{\ve}{\varepsilon}
\newcommand{\Rnp}{\mathbb R^{N+1}_+}
\newcommand{\Lo}{\mathscr L^{2s,p}}
\newcommand{\Ma}{\mathscr M}
\newcommand{\Po}{\mathscr P}
\newcommand{\Lp}{L^p}
\newcommand{\Li}{L^\infty}
\newcommand{\Lii}{L^\infty_0}
\newtheorem{theorem}{Theorem}[section]
\newtheorem{lemma}[theorem]{Lemma}
\newtheorem{proposition}[theorem]{Proposition}
\newtheorem{corollary}[theorem]{Corollary}
\newtheorem{remark}[theorem]{Remark}
\newtheorem{definition}[theorem]{Definition}
\numberwithin{equation}{section}
\begin{document}

\title[Hardy-Littlewood-Sobolev inequalities, etc.]{Hardy-Littlewood-Sobolev inequalities for a class of non-symmetric and non-doubling hypoelliptic semigroups}


\date{}

\begin{abstract}
In his seminal 1934 paper on Brownian motion and the theory of gases Kolmogorov introduced a second order evolution equation which displays some challenging features. In the opening of his 1967 hypoellipticity paper H\"ormander discussed a general class of degenerate Ornstein-Uhlenbeck operators that includes Kolmogorov's as a special case. In this note we combine semigroup theory with a nonlocal calculus for these hypoelliptic operators to establish new inequalities of Hardy-Littlewood-Sobolev type in the situation when the drift matrix has nonnegative trace. Our work has been influenced by  ideas of E. Stein  and Varopoulos in the framework of symmetric semigroups. One of our objectives is to show that such ideas can be pushed to successfully handle the present degenerate non-symmetric setting.  
\end{abstract}

\author{Nicola Garofalo}

\address{Dipartimento d'Ingegneria Civile e Ambientale (DICEA)\\ Universit\`a di Padova\\ Via Marzolo, 9 - 35131 Padova,  Italy}
\vskip 0.2in
\email{nicola.garofalo@unipd.it}

\thanks{The first author was supported in part by a Progetto SID (Investimento Strategico di Dipartimento) ``Non-local operators in geometry and in free boundary problems, and their connection with the applied sciences", University of Padova, 2017.}

\author{Giulio Tralli}
\address{Dipartimento d'Ingegneria Civile e Ambientale (DICEA)\\ Universit\`a di Padova\\ Via Marzolo, 9 - 35131 Padova,  Italy}
\vskip 0.2in
\email{giulio.tralli@unipd.it}

\maketitle

\tableofcontents

\section{Introduction}\label{S:intro}

Sobolev inequalities occupy a central position in analysis, geometry and physics. Typically, in such a priori estimates one is able to control a certain $L^q$ norm of a derivative of a function in terms of a $\Lp$ norm of derivatives of higher order. One distinctive aspect of these inequalities is that there is gain in the exponent of integrability, i.e., $q>p$. For instance, the prototypical Sobolev inequality in $\RN$ states that for any $1\le p<N$, there exists a constant $S_{N,p}$ such that for any function $f$ in the Schwartz class $\So$, one has
\[
(\star)\ \ \ ||f||_q \le S_{N,p}\ ||\nabla f||_p\ \ \ \Longleftrightarrow\ \ \ \ \frac 1p - \frac 1q = \frac 1N.
\]
In such framework, ($\star$) is referred to as the embedding theorem $W^{1,p}(\RN) \hookrightarrow L^q(\RN)$. The relation between the exponents $p$ and $q$ in ($\star$) is the well-known Hardy-Littlewood-Sobolev condition, and the if and only if character is connected with the interplay between the differential operator $\nabla$ and the homogeneous structure (Euclidean dilations) of the ambient space.   

In this paper we are concerned with a scale of global inequalities such as the one above for the following class of second-order partial differential equations in $\RNu$,
\begin{equation}\label{K0}
\mathscr K u = \mathscr A u  - \de_t u \overset{def}{=} \operatorname{tr}(Q \nabla^2 u) + <BX,\nabla u> - \de_t u = 0,
\end{equation}
where the $N\times N$ matrices $Q$ and $B$ have real, constant coefficients, and $Q = Q^\star \ge 0$. 
We assume throughout that $N\ge 2$, and we indicate with $X$ the generic point in $\R^N$, with $(X,t)$ the one in $\RNu$. It is worth noting here that when $Q = I_N$ and $B = O_N$, then \eqref{K0} becomes the standard heat operator $\Delta - \p_t$ in $\RNu$, and we are back into the framework of ($\star$). But in the degenerate case when $Q\ge 0$ and $B\not= O_N$, then the evolution of equations such as \eqref{K0} is driven by semigroups $P_t = e^{-t \sA}$ which, in general, are non-symmetric and non-doubling. Furthermore, there is no global homogeneous structure associated with them, and they lack an obvious notion of ``gradient". For instance, a tool like the P.A. Meyer \emph{carr\'e du champ} $\Gamma(f) = \frac 12 [\sA(f^2) - 2 f \sA f]$ is not directly effective here since $\G(f) = <Q\nabla f,\nabla f>$. This misses all directions of non-ellipticity in the degenerate case, and also does not provide control on the drift.

The class \eqref{K0} first appeared in the 1967 work of H\"ormander \cite{Ho}, in which he proved his celebrated hypoellipticity theorem asserting that if smooth vector fields $Y_0, Y_1,...,Y_m$ in $\RNu$ verify the finite rank condition on the Lie algebra, then the operator $\sum_{i=1}^m Y_i^2 + Y_0$ is hypoelliptic. To motivate this result, in the opening of his paper he discussed \eqref{K0} and showed that $\K$ is hypoelliptic if and only if
$\operatorname{Ker} Q$\ does not contain any non-trivial subspace which is invariant for\ $B^\star$. This condition can be equivalently expressed in terms of the strict positivity, hence invertibility, of 
the covariance matrix 
\begin{equation}\label{Kt}
K(t) = \frac 1t \int_0^t e^{sB} Q e^{s B^\star} ds
\end{equation}
for every $t>0$.
We note that, in the degenerate case when $Q$ fails to be elliptic, this property becomes void at $t = 0$, since $K(0) = Q$. Also, it is easy to see that $K(t)>0$ for every $t>0$ if and only if $K(t_0)>0$ for one $t_0>0$. Under the hypoellipticity assumption H\"ormander constructed a fundamental solution $p(X,Y,t)>0$ for \eqref{K0}, and proved that, given $f\in \So$, the Cauchy problem $\K u = 0, u(X,0) = f(X)$ admits a unique solution given by $P_t f(X) = \int_{\RN} p(X,Y,t) f(Y) dY$. This defines a non-symmetric semigroup $\{P_t\}_{t>0}$ which is strongly continuous in $\Lp$, $1\le p<\infty$, satisfies $P_t 1 = 1$, but which, however, is not contractive in general. 

Our primary interest in this paper is on the subclass of \eqref{K0} which, besides H\"ormander's hypoellipticity condition $K(t)>0$, also satisfy the assumption
\begin{equation}\label{trace}
\operatorname{tr} B \ge 0.
\end{equation} 
This serves to guarantee that the semigroup $\{P_t\}_{t>0}$ be contractive in $\Lp$ for $1\le p<\infty$, a fact that plays a pervasive role in our work. A prototypical example to keep in mind is the operator 
\[
\K_0 u = \Delta_v u + <v,\nabla_x u> - \p_t u,
\]
introduced by Kolmogorov in his seminal 1934 note \cite{Kol} on Brownian motion and the theory of gases. Here, we have let $N = 2n$, and $X = (v,x)$, with $v, x\in \Rn$. Such $\K_0$ fails to be parabolic since it is missing the diffusive term $\Delta_x u$, but it is easily seen to satisfy H\"ormander's finite rank condition for the hypoellipticity.  Equivalently, one can verify that $K(t)=\begin{pmatrix} I_n & t/2\ I_n\\ t/2\ I_n& t^2/3\ I_n\end{pmatrix}>0$ for every $t>0$. Remarkably, Kolmogorov himself had already produced the following explicit fundamental solution
\begin{align*}
p_0(X,Y,t) & = \frac{c_n}{t^{2n}} \exp\big\{- \frac 1t \big(|v-w|^2 
 + \frac 3t <v-w,y-x-tv>  + \frac{3}{t^2} |x- y +tv|^2\big)\big\},
\end{align*}
where $Y = (w,y)$.
Since such function is smooth off the diagonal, it follows that he had proved that $\K_0$ is hypoelliptic more than thirty years before \cite{Ho}.  We note that the hypothesis \eqref{trace} trivially includes Kolmogorov's operator $\K_0$ since for the latter we have $\operatorname{tr} B = 0$, but also encompasses several different examples of interest in mathematics and physics. For a short list  the reader can see the items in red in the table in fig.\ref{fig} in Section \ref{S:volume}. For the items in black (see \cite{OU}, \cite{UW}, \cite{Bri} and \cite{Fre}) we have $\operatorname{tr} B < 0$, thus they are not covered by our results. Such subclass of \eqref{K0} will be analysed in a future study. 

To provide the reader with some perspective we mention that during the last three decades there has been considerable progress in the study of the equations \eqref{K0}. The existing approaches are essentially of two types: a) far reaching adaptations of direct methods from partial differential equations combined with Lie group theory and analysis in spaces with homogeneous and non-homogeneous structures; or b) powerful combinations of ideas from probability and semigroup theory. For the existing literature covering either a) or b), the reader should see \cite{K82}, \cite{GL}, \cite{LP}, \cite{Pcat94}, \cite{L}, \cite{PARMA}, \cite{PoRa}, \cite{LaMo}, \cite{Me}, \cite{MPP}, \cite{Bou}, \cite{PP04}, \cite{PrZa}, \cite{FL}, \cite{DP}, \cite{CPP}, \cite{BCLP}, \cite{FNPP}, \cite{BRS}, \cite{NP}, \cite{Ny}, \cite{AT}, \cite{GIMV}, \cite{AM}, but such list is by no means exhaustive. One should also consult the survey papers \cite{LPP} and \cite{Bog}, and the books \cite{DZ92}, \cite{DZ}, \cite{HN}, \cite{LB} and \cite{Vi}. Despite such large body of works, some aspects presently remain elusive, such as: (i) a systematic development of an intrinsic Hardy-Littlewood-Sobolev theory;  (ii) the analysis of local and nonlocal isoperimetric inequalities.
The aim of the present paper is to take a first step in the program (i). In the work \cite{GTiso} we address (ii). 
 
Our approach combines semigroup theory with the nonlocal calculus for  \eqref{K0} recently developed in \cite{GT}, and it has been influenced by the ideas of E. Stein in \cite{Steinlp} and Varopoulos in \cite{V85} in the setting of positive symmetric semigroups. In fact, one of the objectives of the present paper is to show that their powerful ideas can be pushed to successfully handle the degenerate non-symmetric setting of \eqref{K0}. 
   
A discussion of the main results and techniques seems in order at this point. Section \ref{S:ks} is devoted to collecting the known background results on the semigroup $\{P_t\}_{t>0}$. We introduce the intertwined non-symmetric pseudo-distance $m_t(X,Y)$, and the time-dependent pseudo-balls $B_t(X,r)$. The volume function $V(t) = \operatorname{Vol}_N(B_t(X,\sqrt t))$ is defined in \eqref{VS}. The relevance of such function is demonstrated by its place in H\"ormander's probability transition density \eqref{PtKt}. We also recall for completeness an important result from \cite{LP} stating that as $t\to 0^+$ the small-time behaviour of $V(t)$ is governed by a suitable infinitesimal homogeneous structure. Using such information one can show that there exists $D_0\ge N\ge 2$ such that 
$V(t)\ \cong\ t^{D_0/2}$ as $t\to 0^+$.
We call the number $D_0$ the intrinsic dimension of the semigroup at zero. 

As it became evident from the work \cite{V85} (see also \cite{V80}, \cite{Viso} and \cite{VSC}), in Varopoulos' semigroup approach to the Hardy-Littlewood theory the evolution is driven by the large time behaviour of the semigroup. It should thus come as no surprise that the functional inequalities in this paper hinge on the behaviour of the volume function $V(t)$ as $t\to \infty$. Section \ref{S:volume} is dedicated to the analysis of this aspect. The first key result is Proposition \ref{P:boom} in which we show that, under the hypothesis \eqref{trace}, the function $V(t)$ must blow-up at least linearly as $t\to \infty$ (note that for the Ornstein-Uhlenbeck operator $\Delta_X - <X,\nabla_X> - \p_t$, for which $\operatorname{tr}(B) < 0$, one has instead $V(t)\to c_N>0$ as $t\to \infty$). Furthermore, if the drift matrix $B$ has at least one eigenvalue with strictly positive real part, then $V(t)$ blows up exponentially and is not doubling. In other words, in such situation the drift induces a negative ``curvature"  in the ambient space $\RN$. In Definition \ref{D:hld} we introduce the key notion of \emph{intrinsic dimension at infinity} of the semigroup, and we indicate such number with $D_\infty$. We note that the above mentioned minimal linear growth of $V(t)$ at infinity, provides the basic information that $D_\infty \ge 2$. The reader should see the table in fig.\ref{fig} where the quantities $D_0$ and $D_\infty$ are compared for several differential operators of interest in mathematics and physics.  The second result of the section is Proposition \ref{P:Koneinfty} which establishes the $\Lp-L^\infty$ ultracontractivity of the semigroup $\{P_t\}_{t>0}$ for $1\le p<\infty$. As the reader can surmise from the seminal work \cite[Theorem 1]{V85} in the symmetric case, such property plays a central role in our work as well.

In Section \ref{S:fpA} we introduce the relevant Sobolev spaces. One of the difficulties in the analysis of \eqref{K0}, already hinted at above, is that a ``gradient" is not readily available. This problem is circumvented using the nonlocal operator $(-\sA)^{1/2}$ as a gradient since it intrinsically contains the appropriate fractional order of differentiation along the drift, which is instead missing in the above mentioned carr\'e du champ. 
By means of Balakrishnan's formula \eqref{As}, we can precisely identify the nonlocal operators $\As$ by means of the semigroup $\{P_t\}_{t>0}$. This allows to introduce spaces of Sobolev type as follows. Given $0<s<1$ and $1\le p<\infty$, we define the Banach space
$$\Lo = \overline{\So}^{|| \  ||_{\Lo}},$$
where for a function in Schwartz class $\So$ we have denoted by  
$||f||_{\Lo} \overset{def}{=} ||f||_{L^p} + ||(-\sA)^s f||_{L^p}$.  We stress that, when $\sA = \Delta$, $s = 1/2$ and $1<p<\infty$, the classical Calder\'on-Zygmund theory guarantees that the space $\mathscr L^{1,p}$ coincides with the standard Sobolev space $
W^{1,p} = \{f\in L^p\mid \nabla f\in L^p\}$.

In Section \ref{S:poisson}, under the hypothesis \eqref{trace}, we establish a Littlewood-Paley estimate that has been so far missing in the analysis of the class  \eqref{K0}. To achieve this we have combined a far reaching idea of E. Stein in \cite{Steinlp} with the kernel associated with the Poisson semigroup $\Po_z = e^{z(-\sA)^{1/2}}$ in \cite{GT}. Combining such tools with the powerful abstract Hopf-Dunford-Schwartz ergodic theorem in \cite{DS} we obtain the main weak$-L^1$ estimate in Theorem \ref{T:maximal}.  
 
In Section \ref{S:fi} we introduce, for any $0<\alpha<D_\infty$, the Riesz potential operators $\Ia$.  Our central result is Theorem \ref{T:inverse} that shows that for any $0<\alpha<2$ and $f\in \So$, one has
\begin{equation}\label{azz}
f = \mathscr I_{\alpha} \circ (-\sA)^{\alpha/2} f = (-\sA)^{\alpha/2} \circ \Ia f.
\end{equation}
This proves that $\Ia = (-\sA)^{-\alpha/2}$. 
 Again, the hypothesis \eqref{trace} is essential. The reader should pay attention here to the fact noted above that, under such assumption, we have $D_\infty\ge 2$, and thus \eqref{azz} covers the whole range $0<\alpha<2$. We note that, once again, the semigroup $\Po_z = e^{z(-\sA)^{1/2}}$, $z>0$, is in the background here. 

In Section \ref{S:sobolev} we establish our main Hardy-Littlewood-Sobolev embedding, Theorem \ref{T:main}. Suppose that there exist $D, \gamma_D>0$ such that 
\begin{equation}\label{vol0}
V(t) \ge \gamma_D\ t^{D/2},\ \ \ \ \ \ \ \ \ \forall t>0.
\end{equation}
Then, for every $0<\alpha<D$ the operator $\Ia$ maps $L^1$ into $L^{\frac{D}{D-\alpha},\infty}$. If instead $1<p<D/\alpha$, then $\Ia$ maps $L^p$ to $L^q$, with $\frac 1p - \frac 1q = \frac{\alpha}D$. Combining this result with \eqref{azz} we finally obtain the Sobolev embedding Theorem \ref{T:sob}. We mention that in the ``negative curvature" situation when $D_\infty = \infty$, see in this respect the operator of Kolmogorov with friction in ex.$6^+$ in fig.\ref{fig}, given any $1\le p<\infty$ we are free to chose $D>\max\{D_0,2sp\}$ such that \eqref{vol0} hold. For such $D$ we thus obtain $\Lo \hookrightarrow L^{pD/(D-2sp)}$. The reader should note that \eqref{vol0} implies that $2\le D_0 \le D\le D_\infty$, and thus Theorems \ref{T:main} and \ref{T:sob} do not cover the possibility $D_0>D_\infty$. In the degenerate setting this case can occur, see the Ex. 4 of the Kramers' operator in fig.\ref{fig}. When $D_0>D_\infty$ the estimate \eqref{vol0} must be replaced by \eqref{vol2} below and, under such hypothesis, we obtain appropriate versions of the above described results, see Theorems \ref{T:mainsum} and \ref{T:sobsum}.

In closing, we compare our results with the available literature.
Presently, there exist very few Sobolev-type estimates related to the class of degenerate operators \eqref{K0}. In \cite{PP04, CPP} the authors prove some interesting local results for nonnegative solutions to equations modelled on \eqref{K0}. They use tools from potential theory and representation formulas. The restriction to solutions, however, does not allow to obtain a priori information for arbitrary functions. For kinetic Fokker-Planck equations (where in particular we have $X= (v,x)$, with $v$ indicating velocity and $x$ position), we mention the recent papers \cite{GIMV} and \cite{AM}. In the former the authors prove a local gain of integrability for nonnegative sub-solutions via a non-trivial adaptation of the so-called velocity averaging method. In the latter the authors obtain a Poincar\'e inequality in a weighted $L^2$ space by means of a ad-hoc variational space. Our results differ from either one of these works since our Sobolev spaces $\Lo$ are defined with the aid of the nonlocal operators $\As$. Similarly to the classical potential estimate $|f(X)|\le c_N I_1(|\nabla f|)(X)$, our formula \eqref{azz}, combined with Theorem \ref{T:sob}, provides the sharp a priori control of the $L^q$ norm of a function, in terms of the appropriate fractional order of differentiation. Both, along the directions of ellipticity, and of the drift.

We also mention \cite{Bou}, in which the author obtained $L^2$ a priori estimates for the above discussed homogeneous Kolmogorov's operator $\K_0$, and the work \cite{BCLP}, where the authors prove some Calder\'on-Zygmund type estimates (both in $L^p$ and weak-$L^1$) for the operator $\sA$. 
The interesting analysis in \cite{BCLP} combines local singular integral estimates with suitable coverings that exploit the homogeneous structure discovered in \cite{LP} (see also subsection \ref{SS:D0} below). Our approach, based on the semigroup $\Po_z = e^{z\sqrt{-\sA}}$, is different and allows to obtain results of a global nature, both in space and time.

\subsection{Notation} 

The notation $\operatorname{tr} A$ indicates the trace of a matrix $A$, $A^\star$ is the transpose of $A$, and $\nabla^2 u$ denotes the Hessian matrix of a function $u$.  All the function spaces in this paper are based on $\RN$, thus we will routinely avoid reference to the ambient space throughout this work. For instance, the Schwartz space of rapidly decreasing functions in $\RN$ will be denoted by $\So$, and for $1\le p \le \infty$ we let $\Lp = L^p(\RN)$. The norm in $\Lp$ will be denoted by $||\cdot||_p$, instead of $||\cdot||_{\Lp}$. We will indicate with $\Lii$ the Banach space of the $f\in C(\RN)$ such that $\underset{|X|\to \infty}{\lim}\ |f(X)| = 0$ with the norm $||\cdot||_\infty$. The reader should keep in mind the following simple facts: (1) $P_t : L^\infty_0 \to L^\infty_0$ for every $t>0$; (2) $\So$ is dense in $\Lii$.
 The notation  $|E|$ will indicate the $N$-dimensional Lebesgue measure of a set $E$. If $T:\Lp\to L^q$ is a bounded linear map, we will indicate with $||T||_{p\to q}$ its operator norm. If $ q =p$, the spectrum of $T$ on $\Lp$ will be denoted by $\sigma_p(T)$, the resolvent set by $\rho_p(T)$, the resolvent operator by $R(\la,T) = (\la I - T)^{-1}$. The notation $\operatorname{tr} A$ indicates the trace of a matrix $A$, $A^\star$ is the transpose of $A$, and $\nabla^2 u$ denotes the Hessian matrix of a function $u$. For $x>0$ we will indicate with $\G(x) = \int_0^\infty t^{x-1} e^{-t} dt$ Euler's gamma function.
For any $N\in \mathbb N$ we will use the standard notation
$\sigma_{N-1} = \frac{2\pi^{N/2}}{\G(N/2)}$, $\omega_N = \frac{\sigma_{N-1}}{N}$,
respectively for the $(N-1)$-dimensional measure of the unit sphere $\mathbb S^{N-1} \subset \RN$, and $N$-dimensional measure of the unit ball. We adopt the convention that $a/\infty = 0$ for any $a\in \R$.


\section{Preliminaries}\label{S:ks}

In this section we collect, mostly without proofs, various properties of the semigroup associated with \eqref{K0} which will be used throughout the rest of the paper. One should see \cite[Section 2]{GT}, where some of the results in this section are discussed in detail.

\subsection{One-parameter intertwined pseudo-distances}\label{SS:m}

Given matrices $Q$ and $B$ as in \eqref{K0} we introduce a one-parameter family of intertwined pseudo-distances which plays a key role in the analysis of the relevant operators $\K$. For $X, Y\in \RN$ we define
\begin{align}\label{m}
m_t(X,Y) & = \sqrt{<K(t)^{-1}(Y-e^{tB} X ),Y-e^{tB} X >},\ \ \ \ \ \ \ t>0.
\end{align}
It is obvious that, when $B\not= O_N$, we have $m_t(X,Y) \not= m_t(Y,X)$ for every $t>0$. Given $X\in \RN$ and $r>0$, we consider the set 
$$B_t(X,r) = \{Y\in \RN\mid m_t(X,Y) < r\},$$
and call it the time-varying pseudo-ball.
We will need the following simple result.

\begin{lemma}\label{L:slices}
Let $g\in L^1(0,\infty)$. Then, for every $X\in \RN$ and $t>0$ one has
\begin{equation}\label{g}
\int_{\RN} g(m_t(X,Y)) dY =  \sigma_{N-1} (\det K(t))^{1/2}\int_0^\infty g(r) r^{N-1} dr,
\end{equation}
and
\begin{equation}\label{gstar}
\int_{\RN} g(m_t(X,Y)) dX =  \sigma_{N-1} e^{-t \operatorname{tr} B} (\det K(t))^{1/2}\int_0^\infty g(r) r^{N-1} dr.
\end{equation}
In particular, we have
\begin{equation}\label{misB}
\operatorname{Vol}_N(B_t(X,r)) = \omega_N r^N (\det K(t))^{1/2}. \end{equation}

\end{lemma}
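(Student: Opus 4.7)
The plan is to reduce everything to standard polar coordinates on $\RN$ via a linear change of variables adapted to the positive definite matrix $K(t)$. Since hypoellipticity guarantees $K(t)>0$ for $t>0$, its symmetric positive square root $K(t)^{1/2}$ is well defined and invertible.

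For the first identity, I would fix $X,t$ and substitute $W = Y - e^{tB} X$, which has unit Jacobian, and then $Z = K(t)^{-1/2} W$, with Jacobian $(\det K(t))^{1/2}$. Under the second substitution the quadratic form transforms as
\[
\bigl\langle K(t)^{-1} W, W\bigr\rangle = \bigl\langle K(t)^{-1/2} W, K(t)^{-1/2} W\bigr\rangle = |Z|^2,
\]
so that $m_t(X,Y) = |Z|$. This converts the left-hand side into $(\det K(t))^{1/2}\int_{\RN} g(|Z|)\, dZ$, and the standard polar change of coordinates in $\RN$ yields the factor $\sigma_{N-1}\int_0^\infty g(r) r^{N-1}\, dr$, which gives \eqref{g}.

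For the second identity, the only difference is that now $Y$ and $t$ are fixed and we integrate in $X$. I would write $W = Y - e^{tB} X$, so that $X = e^{-tB}(Y - W)$ and $dX = |\det e^{-tB}|\, dW = e^{-t \operatorname{tr} B}\, dW$, using the identity $\det e^{-tB} = e^{-t \operatorname{tr} B} > 0$. After this step the argument is identical to the one above, now producing the extra factor $e^{-t \operatorname{tr} B}$, which is \eqref{gstar}. Finally, \eqref{misB} follows at once by applying \eqref{g} with $g$ the indicator of $[0,r)$, since
\[
\operatorname{Vol}_N(B_t(X,r)) = \int_{\RN} \mathbf{1}_{[0,r)}(m_t(X,Y))\, dY = \sigma_{N-1} (\det K(t))^{1/2} \int_0^r \rho^{N-1}\, d\rho = \omega_N r^N (\det K(t))^{1/2}.
\]

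There is no real obstacle here; the statement is essentially a bookkeeping exercise once one recognizes that $m_t(X,\cdot)$ is the norm induced by the inner product $\langle K(t)^{-1} \cdot, \cdot\rangle$ centered at $e^{tB}X$. The only point that deserves a line of care is the appearance of the factor $e^{-t \operatorname{tr} B}$ in \eqref{gstar}, which comes from the non-symmetric nature of $m_t$ and reflects the asymmetry of the underlying semigroup; this is precisely the feature that will play a role later when comparing $P_t$ with its formal adjoint.
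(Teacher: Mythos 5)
Your proof is correct and follows essentially the same route as the paper: the substitution $Z = K(t)^{-1/2}(Y - e^{tB}X)$ for \eqref{g}, the extra Jacobian factor $\det e^{-tB} = e^{-t\operatorname{tr}B}$ for \eqref{gstar} (which the paper leaves to the reader), and the choice $g = \mathbf 1_{(0,r)}$ for \eqref{misB}. Nothing is missing.
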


\begin{proof}
Formula \eqref{g} easily follows from \eqref{m} by the change of variable $Z = K(t)^{-1/2}(Y-e^{tB} X )$. The latter gives
\begin{align*}
\int_{\RN} g(m_t(X,Y)) dY = (\det K(t))^{1/2} \int_{\RN} g(|Z|) dZ = \sigma_{N-1} (\det K(t))^{1/2}\int_0^\infty g(r) r^{N-1} dr.
\end{align*}
The proof of \eqref{gstar} is similar and we leave it to the reader.
To obtain \eqref{misB} it suffices to apply \eqref{g} with $g = \mathbf 1_{(0,r)}$.

\end{proof}

We stress that the quantity in the right-hand side of \eqref{misB} is independent of $X\in \RN$, a reflection of the underlying Lie group structure induced  by the matrix $B$, see Remark \ref{R:lie}. As a consequence, we will hereafter drop the dependence in such variable and indicate 
$\operatorname{Vol}_N(B_t(X,r)) = V_t(r)$.
When $r = \sqrt t$, we will simply write $V(t)$, instead of $V_t(\sqrt t)$, i.e., 
\begin{equation}\label{VS}
V(t) = \operatorname{Vol}_N(B_t(X,\sqrt t)) = \omega_N  (\det(t K(t)))^{1/2}.
\end{equation}
 
\subsection{The Cauchy problem}\label{SS:hs}
We next recall the theorem in the opening of \cite{Ho} which constitutes the starting point of the present work. We warn the unfamiliar reader that our presentation of the fundamental solution \eqref{PtKt} of \eqref{K0} differs from that in \cite{Ho}. This is done to emphasise the role of the one-parameter intertwined pseudo-distances \eqref{m} and of the corresponding volume function $V(t)$ defined by \eqref{VS}. In \eqref{PtKt} below we have let $c_N = (4\pi)^{-N/2} \omega_N$.

\begin{theorem}[H\"ormander]\label{T:hor}
Given $Q$ and $B$ as in \eqref{K0}, for every $t> 0$ consider the \emph{covariance matrix} \eqref{Kt}.
Then, the operator $\K$ is hypoelliptic if and only if 
$\operatorname{det} K(t) >0$ for every $t>0$.
In such case, given $f \in \So$, the unique solution to the Cauchy problem $\K u = 0$ in $\R^{N+1}_+$, $u(X,0) = f$, is given by $P_t f(X) = \int_{\R^N} p(X,Y,t) f(Y) dY$,
where
\begin{equation}\label{PtKt}
p(X,Y,t) = \frac{c_N}{V(t)} \exp\left( - \frac{m_t(X,Y)^2}{4t}\right).
\end{equation}
\end{theorem}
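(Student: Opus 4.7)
The plan is to work on the Fourier side. After taking a spatial Fourier transform, $\mathscr K$ becomes a first-order linear PDE in $(\xi,t)$ whose characteristics are the explicit orbits $\xi\mapsto e^{sB^\star}\xi$. Along these characteristics the PDE reduces to a pure multiplication-type ODE that can be integrated exactly; the integral that emerges is precisely $\int_0^t \langle Q e^{sB^\star}\xi, e^{sB^\star}\xi\rangle ds$, which upon backtracking to the initial Fourier variable yields the covariance matrix $K(t)$ from \eqref{Kt}. This machinery is due to Kolmogorov in his special case and was generalized in \cite{Ho}; I would organize it as follows.

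Starting from the Cauchy problem with $f\in\So$, I use the identities $\widehat{\operatorname{tr}(Q\nabla^2 u)}(\xi) = -\langle Q\xi,\xi\rangle \hat u$ and $\widehat{\langle BX,\nabla u\rangle}(\xi) = -(\operatorname{tr} B)\hat u - \langle B^\star\xi,\nabla_\xi \hat u\rangle$ to get
\[
\partial_t \hat u + \langle B^\star\xi,\nabla_\xi \hat u\rangle = -\bigl(\langle Q\xi,\xi\rangle + \operatorname{tr} B\bigr)\hat u.
\]
Solving along the characteristics $\xi(s) = e^{sB^\star}\xi_0$ and inverting at time $t$ gives
\[
\hat u(\xi,t) = e^{-t\operatorname{tr} B}\exp\!\Bigl(-\langle \widetilde K(t)\xi,\xi\rangle\Bigr)\hat f(e^{-tB^\star}\xi),
\qquad \widetilde K(t) = \int_0^t e^{-rB}Q e^{-rB^\star}dr.
\]
Inverting the Fourier transform turns the Gaussian factor into another Gaussian in $X$, and the substitution $Y\mapsto e^{tB}Y$ combined with the conjugation identity $e^{tB}\widetilde K(t) e^{tB^\star} = t K(t)$ (together with $\det(e^{tB}\widetilde K(t) e^{tB^\star}) = \det(t K(t))$ absorbing the factor $e^{-t\operatorname{tr} B}$) rearranges the expression into exactly \eqref{PtKt}, with the correct normalization constant $c_N = (4\pi)^{-N/2}\omega_N$ dictated by the volume $V(t)$ of \eqref{VS}.

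Once the kernel is in hand, the ``if'' direction is essentially read off: if $\det K(t)>0$ for every $t>0$, then $p(\cdot,\cdot,t)$ is smooth on $\RN\times\RN$, $P_t f\in C^\infty$, and a direct calculation confirms $\K (P_t f)=0$ and $P_t f\to f$ in $\So$ as $t\to 0^+$; hence $\K$ is hypoelliptic. Uniqueness in the Schwartz class follows from the Fourier representation itself: any solution with $u(\cdot,0)=0$ must satisfy $\hat u(\xi,t)\equiv 0$ by the uniqueness of solutions to the transport equation along the characteristics $\xi(s)=e^{sB^\star}\xi_0$.

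The main obstacle is the ``only if'' direction, which requires the linear-algebraic equivalence: $\det K(t)>0$ for all $t>0$ iff $\ker Q$ contains no non-trivial subspace invariant under $B^\star$, iff the vector fields naturally associated with the principal part of $\K$, namely the drift $\langle BX,\nabla\rangle - \p_t$ together with the columns of a square root of $Q$, satisfy H\"ormander's finite-rank bracket-generating condition. The plan here is to verify that the degeneracy of $K(t_0)$ for some $t_0>0$ forces a common invariant subspace of $\operatorname{Ker} Q$ under $B^\star$ (by differentiating $\langle K(t)v,v\rangle = 0$ repeatedly at $t=0$), and then to produce, via a change of basis, a direction along which $\K$ reduces to a genuinely first-order transport operator with no smoothing, contradicting hypoellipticity; conversely, when the rank condition holds, hypoellipticity is granted by H\"ormander's general theorem. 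This algebraic step is the technical heart, and is the part where I would be most careful in a detailed write-up.
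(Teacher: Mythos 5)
The paper does not prove Theorem \ref{T:hor}: it is recalled verbatim from the opening of H\"ormander's paper \cite{Ho} (see also \cite{Kol} and \cite{LP}), with the only novelty being the repackaging of the Gaussian kernel in terms of $m_t$ and $V(t)$. So there is nothing internal to compare against; what you have written is the classical Kolmogorov--H\"ormander derivation, and your computations are correct. In particular the Fourier-side identities are right, the characteristic ODE integrates to $\exp\left(-t\operatorname{tr}B-\langle tK(t)\xi_0,\xi_0\rangle\right)$ with $\xi_0=e^{-tB^\star}\xi$, and the conjugation $e^{tB}\widetilde K(t)e^{tB^\star}=tK(t)$ together with $\det \widetilde K(t)=e^{-2t\operatorname{tr}B}\det(tK(t))$ does exactly the bookkeeping needed to land on \eqref{PtKt} with $c_N=(4\pi)^{-N/2}\omega_N$, since $V(t)=\omega_N(\det(tK(t)))^{1/2}$. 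The ``only if'' sketch is also the standard one: degeneracy of $K(t_0)$ gives $Q^{1/2}e^{sB^\star}v=0$ for $0\le s\le t_0$, hence (by analyticity in $s$, or equivalently by your repeated differentiation at $s=0$) $\operatorname{span}\{(B^\star)^k v\}$ is a $B^\star$-invariant subspace of $\operatorname{Ker}Q$, along which one exhibits a non-smooth solution of the pure transport equation.

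One point deserves more care than your write-up gives it. In the ``if'' direction, smoothness of the Cauchy kernel $p(X,Y,t)$ and the identity $\K(P_tf)=0$ do not by themselves yield hypoellipticity, which is a local statement about arbitrary distributional solutions of $\K u=g$: you need either a parametrix argument built from a fundamental solution smooth off the diagonal, or the equivalence of $\det K(t)>0$ with the finite-rank bracket condition (items (i) and (iv) of Proposition \ref{P:equiv}) followed by an appeal to H\"ormander's general theorem. You do invoke the latter in your closing paragraph, which closes the gap, but as stated earlier in the proposal the inference ``kernel smooth $\Rightarrow$ hypoelliptic'' is not valid and should be removed or replaced by the rank-condition route. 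Similarly, the uniqueness claim via the Fourier representation implicitly assumes the competing solution is tempered in $X$ for each $t$; that is consistent with the class in which the theorem is stated, but you should say so explicitly.
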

For a small list of differential operators of interest in mathematics and physics that are encompassed by Theorem \ref{T:hor} the reader should see the table in fig.\ref{fig} at the end of this section. 

\begin{remark}\label{R:lie}
We mention that it was noted in \cite{LP} that the class \eqref{K0} is invariant with respect to the following non commutative group law $(X,s)\circ (Y,t) = (Y+ e^{-tB}X,s+t)$. Endowed with the latter, the space $(\R^{N+1},\circ)$ becomes a non-Abelian Lie group. This aspect is reflected in the expression \eqref{PtKt}, as well as in the invariance with respect to $\circ$ of the volume of the intertwined pseudoballs, see \eqref{misB} in Lemma \ref{L:slices}. Except for this, such Lie group structure will play no role in our work.
\end{remark}

\subsection{Semigroup aspects}\label{SS:sa}
In the following lemmas we collect the main (well-known) properties of the semigroup $\{P_t\}_{t>0}$ defined by Theorem  \ref{T:hor}. 

\begin{lemma}\label{L:invS}
For any $t>0$ we have: 
\begin{itemize}
\item[(a)] $\sA(\So)\subset \So$ and $P_t(\So) \subset \So$;
\item[(b)] For any $f\in \So$ and $X\in \RN$ one has $\frac{\de}{\de t} P_t f(X) = \mathscr A P_t f(X)$; 
\item[(c)] For every $f\in \So$, $X\in \RN$ and $t>0$, the commutation property is true
$\mathscr A P_t f(X) = P_t \mathscr A  f(X)$.
\end{itemize}
\end{lemma}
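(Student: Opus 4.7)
The plan is to deduce all three assertions from the explicit representation $P_t f(X) = \int_{\RN} p(X,Y,t) f(Y) dY$ given by Theorem \ref{T:hor} together with the Gaussian character of the kernel. Performing the change of variable $W = Y - e^{tB} X$ and simplifying $c_N/V(t)$ by means of \eqref{VS} and the definition of $c_N$ puts $P_t$ in the convenient form
\[
P_t f(X) = (\phi_t \ast f)(e^{tB} X), \qquad \phi_t(W) = \frac{1}{(4\pi t)^{N/2} (\det K(t))^{1/2}} \exp\left(-\frac{\langle K(t)^{-1} W, W\rangle}{4t}\right),
\]
a Schwartz Gaussian density in $W$. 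This representation will be the main tool for (a) and (c).

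The assertion $\sA(\So) \subset \So$ in (a) is immediate, since $\sA$ is a second-order differential operator whose coefficients are polynomials of degree at most one in $X$, and $\So$ is preserved by differentiation and by polynomial multiplication. For $P_t(\So) \subset \So$, the representation above exhibits $P_t f$ as the composition of the convolution $\phi_t \ast f$ (which lies in $\So$ since both factors are Schwartz) with the invertible linear map $X \mapsto e^{tB} X$; composition with an invertible linear map preserves $\So$, hence $P_t f \in \So$. For (b), by construction $p(\cdot, Y, \cdot)$ is a classical solution of $\K = \sA - \p_t$ on $\RN \times (0, \infty)$, so pointwise $\sA_X p(X, Y, t) = \p_t p(X, Y, t)$. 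All $X$- and $t$-derivatives of $p$ are polynomials in $(X, Y, t)$ multiplied by the Gaussian factor $\exp(-m_t(X,Y)^2/(4t))$, so against $f \in \So$ the resulting integrands admit integrable majorants locally uniformly in $(X, t)$. Differentiation under the integral sign then yields $\p_t P_t f(X) = \sA P_t f(X)$.

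For (c), I would combine (a) and (b) with the semigroup law $P_{t+s} = P_t P_s$. By (a), $P_s f \in \So$ for every $s \geq 0$; applying (b) to $P_t f$ and using linearity of $P_t$ gives
\[
\sA P_t f = \p_s P_{t+s} f \big|_{s=0^+} = \lim_{s \to 0^+} \frac{P_t P_s f - P_t f}{s} = \lim_{s \to 0^+} P_t\left(\frac{P_s f - f}{s}\right).
\]
The main obstacle is to identify the right-hand side with $P_t \sA f$, i.e.\ to bring the limit inside $P_t$. I would handle this by dominated convergence applied to the integral $P_t g(X) = \int \phi_t(W) g(e^{tB}X + W) dW$: by (b), $(P_s f - f)/s = s^{-1} \int_0^s \sA P_r f dr$, so the integrand $\sA P_r f$ belongs to $\So$ and, for $r$ in a fixed small interval $[0,\delta]$, admits a uniform Schwartz majorant in its argument (a point which requires a small amount of additional control on $r \mapsto P_r f$ near $r = 0$ but follows directly from the explicit Gaussian representation). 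The fast decay of $\phi_t$ then permits passing the limit $s \to 0^+$ under the integral defining $P_t$, yielding the commutation $\sA P_t f = P_t \sA f$.
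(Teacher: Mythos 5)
The paper states this lemma without proof (Section \ref{S:ks} collects these facts ``mostly without proofs'', deferring to \cite[Section 2]{GT}), so there is no in-paper argument to compare against; judged on its own, your proof is correct and follows the natural route. The reduction $P_t f(X) = (\phi_t * f)(e^{tB}X)$ with $\phi_t$ a Gaussian of covariance $2tK(t)$ is exactly right and immediately gives (a), and (b) is essentially already contained in Theorem \ref{T:hor} once differentiation under the integral sign is justified as you do. For (c), the one point that genuinely needs care is the uniform Schwartz majorant for $\sA P_r f$ on $r \in (0,\delta]$: as $r \to 0^+$ the matrix $K(r)$ tends to $Q$, which may be degenerate, so $K(r)^{-1}$ blows up in some directions; nevertheless the relevant object is the covariance $2rK(r)$, which is monotone in $r$ and tends to $0$, so the Gaussian tails of $\phi_r$ are controlled uniformly on $(0,\delta]$ and the splitting $|W|\le |X|/2$ versus $|W|>|X|/2$ yields the claimed uniform rapid decay of $\phi_r * g$ for $g \in \So$; together with $|e^{rB}X| \ge c|X|$ this closes the dominated-convergence step. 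An alternative, slightly shorter route to (c) is the abstract semigroup fact that the generator commutes with $P_t$ on its domain, combined with Corollary \ref{C:lp} and part (a) to identify the abstract generator with the differential operator on $\So$; but since the paper places this lemma before Corollary \ref{C:lp}, your direct argument is the more self-contained one.
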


\begin{lemma}\label{L:Pt}
The following properties hold:
\begin{itemize}
\item[(i)] For every $X\in \RN$ and $t>0$ we have
$P_t 1(X) = \int_{\RN} p(X,Y,t) dY = 1$;
\item[(ii)] $P_t:L^\infty \to L^\infty$ with $||P_t||_{L^\infty\to L^\infty} \le 1$;
\item[(iii)] For every $Y\in \RN$ and $t>0$ one has
$\int_{\RN} p(X,Y,t) dX = e^{- t \operatorname{tr} B}$.
\item[(iv)] Let $1\le p<\infty$, then $P_t:L^p \to L^p$ with $||P_t||_{L^p\to L^p} \le e^{-\frac{t \operatorname{tr} B}p}$. If $\operatorname{tr} B\ge 0$, $P_t$ is a contraction on $L^p$ for every $t>0$;
\item[(v)] [Chapman-Kolmogorov equation]
for every $X, Y\in \R^N$ and $t>0$ one has
$p(X,Y,s+t)  = \int_{\R^N} p(X,Z,s) p(Z,Y,t) dZ$.
Equivalently, one has $P_{t+s} = P_t \circ P_s$ for every $s, t>0$.
\end{itemize}
\end{lemma}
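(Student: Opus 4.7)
The plan is to derive (i) and (iii) as direct computations using the Gaussian form \eqref{PtKt} and Lemma \ref{L:slices}, and then to derive the $L^p$ bounds in (ii) and (iv) from them by positivity of the kernel together with Jensen's inequality. Finally, (v) will follow from the uniqueness in the Cauchy problem (Theorem \ref{T:hor}).

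For (i), I would apply \eqref{g} with $g(r)=\frac{c_N}{V(t)}e^{-r^2/(4t)}$. After the change of variable $u=r^2/(4t)$ the radial integral becomes $2^{N-1}t^{N/2}\G(N/2)$. Plugging in $V(t)=\omega_N t^{N/2}(\det K(t))^{1/2}$ and $c_N=(4\pi)^{-N/2}\omega_N$, the determinants cancel and the remaining constant simplifies to $1$ via $\sigma_{N-1}=2\pi^{N/2}/\G(N/2)$. For (iii), the same computation with \eqref{gstar} in place of \eqref{g} produces the extra factor $e^{-t\operatorname{tr} B}$. Property (ii) is then immediate: since $p(X,Y,t)>0$, one has $|P_t f(X)|\le \|f\|_\infty \int_{\RN} p(X,Y,t)\,dY=\|f\|_\infty$ by (i).

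For (iv), the case $p=1$ follows from Tonelli combined with (iii):
\[
\|P_t f\|_1 \le \int_{\RN}\!\!\int_{\RN} p(X,Y,t)|f(Y)|\,dY\,dX = e^{-t\operatorname{tr} B}\|f\|_1.
\]
For $1<p<\infty$ the cleanest route is Jensen's inequality. Since (i) says $p(X,\cdot,t)$ is a probability density in the second variable, convexity of $s\mapsto |s|^p$ yields $|P_t f(X)|^p\le P_t(|f|^p)(X)$. Integrating in $X$ and using (iii) then gives $\|P_t f\|_p^p \le e^{-t\operatorname{tr} B}\|f\|_p^p$, which is the desired bound. (Alternatively, one could interpolate between $p=1$ and $p=\infty$ via Riesz--Thorin, but the Jensen route avoids invoking interpolation.) The contractivity statement when $\operatorname{tr} B\ge 0$ is then obvious.

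For (v), rather than checking the Chapman--Kolmogorov identity directly from the explicit kernel (which is possible but involves manipulating $K(s+t)$, $e^{(s+t)B}$ and the exponent in \eqref{PtKt}), I would exploit uniqueness in Theorem \ref{T:hor}. Given $f\in\So$, parts (a)--(b) of Lemma \ref{L:invS} ensure that both $(X,\tau)\mapsto P_{s+\tau}f(X)$ and $(X,\tau)\mapsto P_\tau(P_s f)(X)$ are smooth solutions of $\K u=0$ for $\tau>0$ that attain the initial datum $P_s f$ as $\tau\to 0^+$. Uniqueness then forces $P_{t+s}f=P_t(P_s f)$, and unwinding the integral representation gives the pointwise kernel identity. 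I expect this last step to be the only non-routine point, since it requires that $P_s f$ is an admissible initial datum: this is precisely what (a) of Lemma \ref{L:invS} guarantees, so the argument closes. A density argument in $\So$ then extends the identity to all relevant classes.
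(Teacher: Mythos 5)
The paper states this lemma without proof, collecting it among the ``well-known'' properties of $\{P_t\}_{t>0}$ (with a pointer to \cite[Section 2]{GT}), so there is no in-paper argument to compare against; your proof is correct and fills that gap. The computations for (i) and (iii) via Lemma \ref{L:slices} check out: the radial integral is indeed $2^{N-1}t^{N/2}\G(N/2)$, and the constant $\sigma_{N-1}(4\pi)^{-N/2}2^{N-1}\G(N/2)$ collapses to $1$, with \eqref{gstar} contributing the extra factor $e^{-t\operatorname{tr}B}$ in (iii). Part (ii) is immediate from positivity and (i), and the Jensen--Tonelli argument for (iv) gives exactly $\|P_t\|_{p\to p}\le e^{-t\operatorname{tr}B/p}$, which is the standard route for positivity-preserving Markovian kernels. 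The only point worth making fully explicit is in (v): the uniqueness assertion of Theorem \ref{T:hor} is invoked for the solution $\tau\mapsto P_{s+\tau}f$ with datum $P_sf$, so you need this function to lie in the class in which uniqueness holds; since $P_sf\in\So$ by (a) of Lemma \ref{L:invS}, $P_{s+\tau}f$ is bounded by (ii), solves the equation by (b), and attains the datum continuously, this is fine, but it deserves a sentence. A self-contained alternative, which avoids any appeal to a uniqueness class, is to verify the kernel identity directly from \eqref{PtKt}: it reduces to the Gaussian convolution identity combined with $e^{(s+t)B}=e^{tB}e^{sB}$ and the covariance decomposition
\begin{equation*}
(s+t)K(s+t)=tK(t)+e^{tB}\bigl(sK(s)\bigr)e^{tB^\star},
\end{equation*}
which also explains why the normalizing determinants in \eqref{PtKt} combine correctly. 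Either route is acceptable; yours is shorter but leans on the (unspecified) uniqueness class of Theorem \ref{T:hor}.
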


\begin{lemma}\label{L:Lprate}
Let $1\le p \le \infty$. Given any $f\in \So$ for any $t\in [0,1]$ we have 
\[
||P_t f - f||_{p} \le ||\mathscr A f||_{p}\ \max\{1,e^{-\frac{\operatorname{tr} B}p}\}\ t.
\]
\end{lemma}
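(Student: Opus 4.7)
The plan is to reduce the statement to the fundamental theorem of calculus applied to $s\mapsto P_s f$, and then to control the resulting integral by the contraction-type bound recorded in Lemma \ref{L:Pt}(iv).

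First, I would fix $f\in \So$. By Lemma \ref{L:invS}(a), $P_s f\in \So$ for every $s\ge 0$, and by Lemma \ref{L:invS}(b)-(c),
\[
\frac{d}{ds} P_s f(X) \;=\; \sA P_s f(X) \;=\; P_s \sA f(X), \qquad X\in \RN,\ s>0.
\]
Since $\sA f\in \So$ as well, and $s\mapsto P_s(\sA f)(X)$ is continuous on $[0,t]$ for each $X$, the classical fundamental theorem of calculus gives, pointwise,
\[
P_t f(X)-f(X) \;=\; \int_0^t P_s(\sA f)(X)\,ds.
\]

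Next, I would take the $L^p$ norm of both sides and apply Minkowski's integral inequality together with Lemma \ref{L:Pt}(iv):
\[
\|P_t f - f\|_p \;\le\; \int_0^t \|P_s(\sA f)\|_p\, ds \;\le\; \|\sA f\|_p \int_0^t e^{-\frac{s\operatorname{tr} B}{p}}\, ds,
\]
with the usual convention $1/\infty=0$ that handles the case $p=\infty$ (in which the exponential factor equals $1$).

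Finally, I would estimate the integral on $[0,t]\subset[0,1]$ by splitting on the sign of $\operatorname{tr} B$. If $\operatorname{tr} B\ge 0$, the integrand is bounded by $1$, so the integral is $\le t$. If $\operatorname{tr} B<0$, the integrand is increasing in $s$, and for $s\le t\le 1$ it is bounded by $e^{-\operatorname{tr} B/p}$, giving an integral $\le e^{-\operatorname{tr} B/p}\, t$. In both cases the bound is $\max\{1,e^{-\operatorname{tr} B/p}\}\, t$, which yields the claim.

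There is no real obstacle here; the only minor point worth checking is the pointwise differentiation under $\partial_s$, which is immediate from Lemma \ref{L:invS} because $f\in \So$ guarantees that all relevant quantities remain in Schwartz class for every $s\ge 0$ and depend smoothly on $s$ in the $\Lp$-norm. Everything else is Minkowski and the explicit bound \mbox{$\|P_s\|_{p\to p}\le e^{-s\operatorname{tr} B/p}$} already furnished by Lemma \ref{L:Pt}(iv).
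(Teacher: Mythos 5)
Your proof is correct and is exactly the standard argument the paper relies on (the lemma is stated without proof, with the details deferred to \cite[Section 2]{GT}): write $P_tf-f=\int_0^t P_s\sA f\,ds$ via Lemma \ref{L:invS}, apply Minkowski and the operator bound $\|P_s\|_{p\to p}\le e^{-s\operatorname{tr}B/p}$ from Lemma \ref{L:Pt}(iv), and bound the resulting integral on $[0,t]\subset[0,1]$ by $\max\{1,e^{-\operatorname{tr}B/p}\}\,t$. No gaps.
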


\begin{corollary}\label{C:Ptpzero}
Let $1\le p< \infty$. For every $f\in L^p$, we have
$||P_tf-f||_{p}\rightarrow 0$ as $t \to 0^+.$ Consequently, $\{P_t\}_{t>0}$ is a strongly continuous semigroup on $\Lp$. The same is true when $p = \infty$, if we replace $L^\infty$ by the space $\Lii$.  
\end{corollary}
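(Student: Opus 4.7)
The plan is the standard three-epsilon argument built on Lemma \ref{L:Lprate} and the density of $\So$ in the target space, combined with the uniform $L^p$-bound from Lemma \ref{L:Pt}(iv).

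First I would treat the case $1\le p<\infty$. Fix $f\in L^p$ and $\e>0$. Since $\So$ is dense in $L^p$, choose $\phi\in \So$ with $\|f-\phi\|_p<\e$. For every $t\in [0,1]$ Lemma \ref{L:Pt}(iv) gives a uniform operator bound $\|P_t\|_{p\to p}\le M:=\max\{1,e^{-\operatorname{tr} B/p}\}$, so
\[
\|P_tf-f\|_p \le \|P_t(f-\phi)\|_p + \|P_t\phi-\phi\|_p + \|\phi-f\|_p \le (M+1)\e + \|P_t\phi-\phi\|_p.
\]
By Lemma \ref{L:Lprate} applied to $\phi\in \So$, $\|P_t\phi-\phi\|_p\le M\|\sA\phi\|_p\, t\to 0$ as $t\to 0^+$. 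Hence $\limsup_{t\to 0^+}\|P_tf-f\|_p\le (M+1)\e$, and $\e>0$ being arbitrary yields the claim.

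For the case $p=\infty$ with the space $\Lii$, I would repeat the same approximation, now using the fact (recorded in the notation subsection) that $\So$ is dense in $\Lii$ and that $P_t:\Lii\to \Lii$ is a contraction by Lemma \ref{L:Pt}(ii). The estimate becomes
\[
\|P_tf-f\|_\infty \le \|f-\phi\|_\infty + \|P_t\phi-\phi\|_\infty + \|f-\phi\|_\infty,
\]
and Lemma \ref{L:Lprate} (with $p=\infty$) again sends the middle term to zero.

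Finally, strong continuity at every $t_0>0$ is a formal consequence of the Chapman--Kolmogorov identity of Lemma \ref{L:Pt}(v): for $h>0$,
\[
\|P_{t_0+h}f-P_{t_0}f\|_p = \|P_{t_0}(P_hf-f)\|_p \le M\,\|P_hf-f\|_p\to 0,
\]
while for $h<0$ with $|h|<t_0$ one writes $P_{t_0}f-P_{t_0+h}f = P_{t_0+h}(P_{-h}f-f)$ and argues the same way. No genuine obstacle is anticipated; the only point requiring minor care is that, when $\operatorname{tr} B<0$, the operator norm $\|P_t\|_{p\to p}$ is not identically $1$, but it is uniformly controlled by $M$ on $[0,1]$, which is all that the argument needs.
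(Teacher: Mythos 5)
Your proof is correct and is essentially the argument the paper intends: the paper states this corollary without proof (the section collects background facts, deferring details to \cite{GT}), but its placement immediately after Lemma \ref{L:Lprate}, together with the facts recorded in the notation subsection ($\So$ dense in $\Lii$ and $P_t:\Lii\to\Lii$), shows the intended route is exactly your three-epsilon density argument combined with the uniform bound on $\|P_t\|_{p\to p}$ for $t\in[0,1]$. The only cosmetic point is that in the continuity check at $t_0>1$ the relevant operator bound is $\|P_{t_0}\|_{p\to p}\le e^{-t_0\operatorname{tr}B/p}$ rather than $M$, but this is still a fixed finite constant and changes nothing.
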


\begin{remark}\label{R:infty}
The reader should keep in mind that from this point on when we consider $\{P_t\}_{t>0}$ as a strongly continuous semigroup in $\Lp$, we always intend to use $\Lii$ when $p = \infty$.
\end{remark}

Denote by $(\sA_p,D_p)$ the infinitesimal generator of the semigroup $\{P_t\}_{t>0}$ on $L^p$ with domain  
\[
D_p = \big\{f\in L^p\mid \sA_p f \overset{def}{=} \underset{t\to 0^+}{\lim}\ \frac{P_t f - f}{t}\ \text{exists in }\ L^p\big\}.
\]
One knows that $(\sA_p,D_p)$ is closed and densely defined (see \cite[Theorem 1.4]{EN}). 

\begin{corollary}\label{C:lp}
We have $\So\subset D_p$. Furthermore, $\sA_p f = \sA f$ for any $f\in \So$, and $\So$ is a core for $(\sA_p,D_p)$.
\end{corollary}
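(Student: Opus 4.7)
The proof breaks into two parts: the identification $\sA_p f = \sA f$ for $f \in \So$, and the core property.

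For the first part, I would fix $f \in \So$ and invoke Lemma \ref{L:invS}. Part (a) ensures $\sA f \in \So$, so that in particular $\sA f \in L^p$ (or $\Lii$ when $p=\infty$). Parts (b) and (c) combined give the identity $\frac{d}{ds} P_s f = \sA P_s f = P_s \sA f$ pointwise on $\RN$. The natural move is then to integrate in $s$, which by a vector-valued fundamental theorem of calculus (legitimate because $s \mapsto P_s \sA f$ is continuous into $L^p$ by Corollary \ref{C:Ptpzero} applied to $\sA f \in \So$) yields
\[
P_t f - f = \int_0^t P_s \sA f\, ds \qquad \text{in } L^p.
\]
Dividing by $t$ and writing
\[
\frac{P_t f - f}{t} - \sA f = \frac{1}{t}\int_0^t \big(P_s \sA f - \sA f\big)\, ds,
\]
the right-hand side tends to $0$ in $L^p$ as $t \to 0^+$, again by the strong continuity of $\{P_s\}$ at $s=0$ applied to $\sA f$. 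This simultaneously shows that $f \in D_p$ and that $\sA_p f = \sA f$.

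For the core property, I would invoke the classical criterion from semigroup theory (see e.g.\ \cite{EN}): a subspace $\mathscr D \subset D_p$ is a core for the generator $(\sA_p,D_p)$ provided that $\mathscr D$ is dense in the ambient Banach space and is invariant under the semigroup, i.e.\ $P_t(\mathscr D) \subset \mathscr D$ for every $t>0$. Both hypotheses are in hand: $\So$ is dense in $L^p$ for $1\le p<\infty$ (and in $\Lii$ when $p=\infty$, cf.\ Remark \ref{R:infty}), while Lemma \ref{L:invS}(a) is precisely the invariance $P_t(\So)\subset \So$. The first part of the proof has already placed $\So$ inside $D_p$, so all hypotheses of the criterion are verified and $\So$ is a core.

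The only mildly delicate point is justifying that the identity $\frac{d}{ds}P_s f = P_s \sA f$ provided by Lemma \ref{L:invS}(b)--(c) lifts from a pointwise statement to an $L^p$-valued Bochner integration identity; however, the fact that $P_s \sA f$ is continuous from $[0,\infty)$ into $L^p$ (for $\sA f \in \So$) and uniformly bounded on compact $s$-intervals makes this routine. Both steps are short and the argument uses only previously established facts.
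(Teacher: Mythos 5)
Your argument is correct and is the standard one: the paper itself omits the proof of this corollary (deferring to \cite[Section 2]{GT}), and your two steps — deriving $P_tf-f=\int_0^t P_s\sA f\,ds$ in $L^p$ from Lemma \ref{L:invS} plus strong continuity, and then applying the dense-and-invariant-subspace core criterion from \cite{EN} — are exactly what is needed. The delicate point you flag (passing from the pointwise derivative identity to the $L^p$-valued Bochner identity) is handled adequately by the continuity and boundedness of $s\mapsto P_s\sA f$, so nothing is missing.
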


\begin{remark}\label{R:id}
From now on for a given $p\in [1,\infty]$ with a slight abuse of notation we write $\sA : D_p\to \Lp$ instead of $\sA_p$. In so doing, we must keep in mind that $\sA$ actually indicates the closed operator $\sA_p$ that, thanks to Corollary \ref{C:lp}, coincides with the differential operator $\sA$ on $\So$. Using this identification we will henceforth say that $(\sA,D_p)$ is the infinitesimal generator of the semigroup $\{P_t\}_{t>0}$ on $\Lp$.
\end{remark}

\begin{lemma}\label{L:specter}
Assume that \eqref{trace} be in force, and let $1\le p \le \infty$. Then: 
\begin{itemize}
\item[(1)] For any $\la\in \mathbb C$ such that $\Re \la >0$, we have $\la\in \rpp$;
\item[(2)] If $\la\in \mathbb C$ such that $\Re \la >0$, then $R(\la,\sA)$ exists and for any $f\in \Lp$ it is given by the formula $R(\la,\sA) f = \int_0^\infty e^{-\la t} P_t f\ dt$;
\item[(3)] For any $\Re \la > 0$ we have $||R(\la,\sA)||_{p\to p} \le \frac{1}{\Re \la}$.
\end{itemize}
\end{lemma}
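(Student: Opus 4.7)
The plan is to reduce everything to the standard Hille--Yosida framework for contraction semigroups. The hypothesis $\operatorname{tr} B \ge 0$ enters through Lemma \ref{L:Pt}(iv), which gives that $\{P_t\}_{t>0}$ is a contraction semigroup on $L^p$ for $1\le p<\infty$; Corollary \ref{C:Ptpzero} (together with Remark \ref{R:infty} for the $p=\infty$ case, handled on $L^\infty_0$) upgrades this to strong continuity. Once we are in the setting of a strongly continuous contraction semigroup on a Banach space, the three claims of the lemma are the classical integral representation of the resolvent of the infinitesimal generator, and this is what I would verify directly.

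First, for $\Re \lambda > 0$ and $f\in L^p$, I would define
\[
R_\lambda f \ =\ \int_0^\infty e^{-\lambda t} P_t f\, dt
\]
as a Bochner integral in $L^p$. The estimate $\|e^{-\lambda t} P_t f\|_p \le e^{-(\Re \lambda) t}\|f\|_p$ shows the integrand is strongly measurable and Bochner-integrable, and taking norms inside the integral immediately gives claim (3), namely $\|R_\lambda\|_{p\to p}\le 1/\Re \lambda$.

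Next I would show that $R_\lambda$ inverts $\lambda I - \sA$ on the relevant domains. For $f\in L^p$ and $h>0$, the change of variable $s=t+h$ gives
\[
\frac{P_h - I}{h}\, R_\lambda f \ =\ \frac{e^{\lambda h}-1}{h}\int_h^\infty e^{-\lambda t} P_t f\, dt \ -\ \frac{1}{h}\int_0^h e^{-\lambda t} P_t f\, dt.
\]
Using the strong continuity from Corollary \ref{C:Ptpzero} to pass $h\to 0^+$, the right-hand side converges in $L^p$ to $\lambda R_\lambda f - f$. By the very definition of the generator, this shows $R_\lambda f\in D_p$ and $(\lambda I - \sA) R_\lambda f = f$. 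For $f\in D_p$, the commutation $\sA P_t f = P_t \sA f$ (Lemma \ref{L:invS}(c), extended to the closure by density of the core $\So$ provided by Corollary \ref{C:lp}) lets one pull $\sA$ through the Bochner integral to obtain $\sA R_\lambda f = R_\lambda \sA f$, hence $R_\lambda(\lambda I -\sA) f = f$ on $D_p$. Thus $\lambda\in \rho_p(\sA)$ with $R(\lambda,\sA)=R_\lambda$, which is (1) and (2).

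The main obstacle is the routine but careful bookkeeping at the interface between the closed operator $(\sA,D_p)$ and the Bochner integral: one must justify that the finite-difference limit above holds in $L^p$-norm (which requires the exponential decay $e^{-(\Re\lambda) t}$ to control the tail of $\int_h^\infty$), and one must justify pulling $\sA$ through the integral for $f\in D_p$, which is a standard application of the closedness of $\sA$ together with the commutation on the core $\So$. Neither step uses anything beyond the strongly continuous, contractive character of $\{P_t\}_{t>0}$, so the trace hypothesis \eqref{trace} is the only ingredient of the lemma that is specific to the degenerate Hörmander class \eqref{K0}.
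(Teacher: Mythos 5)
Your proof is correct and is exactly the argument the paper has in mind: the lemma is stated in the preliminaries without proof precisely because, once contractivity (Lemma \ref{L:Pt}(iv), resting on \eqref{trace}) and strong continuity (Corollary \ref{C:Ptpzero}, with $L^\infty_0$ when $p=\infty$) are in place, it is the classical Laplace-transform representation of the resolvent from Hille--Yosida theory, which you reproduce faithfully (the difference-quotient computation, the closedness/commutation step on $D_p$, and the norm bound $1/\Re\lambda$ are all handled correctly).
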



\subsection{Small-time behaviour of the volume function}\label{SS:D0}
The small-time behaviour of the function $V(t)$ was studied in the paper \cite{LP}, where it was shown that the class of operators \eqref{K0} possesses an infinitesimal osculating structure. For completeness of presentation we recall it in this subsection. We begin with the following known result, see \cite{Ho}, \cite{LP}, \cite{L} and \cite{LB}. 

\begin{proposition}\label{P:equiv}
The following are equivalent:
\begin{itemize}
\item[(i)] condition $K(t)>0$ for every $t>0$;
\item[(ii)] $\operatorname{Ker} Q$\ does not contain any non-trivial subspace which is invariant for\ $B^\star$;
\item[(iii)] $\operatorname{Rank}[Q, B Q,...,B^{N-1} Q] = N$.   (Kalman rank condition)
\item[(iv)] the vector fields defined by $Y_0 u = <BX,\nabla u>$, $Y_i u = \sum_{i,j=1}^N a_{ij} \p_{X_j} u$, $i=1,...,N$,
where $A = [a_{ij}] = Q^{1/2}$, satisfy the finite rank condition
\[
\operatorname{Rank\ Lie}[Y_0,Y_1,...,Y_N](X) = N,\ \ \ \ \ \forall\ X\in \R^N.
\]
\item[(v)] in a suitable basis of $\RN$ the matrices $Q$ and $B$ assume the following form
\[
Q = \begin{pmatrix} Q_0 & 0 & \cdot & \cdot & \cdot & 0 & 0
\\
0 & 0 & \cdot & \cdot & \cdot & 0 & 0
\\
0 & 0 & \cdot & \cdot & \cdot &  0 & 0
\\
\cdot & \cdot & \cdot & \cdot & \cdot & \cdot & \cdot
\\
\cdot & \cdot & \cdot & \cdot & \cdot & \cdot & \cdot
\\
\cdot & \cdot & \cdot & \cdot & \cdot & \cdot & \cdot
\\
0 & 0 & \cdot & \cdot & \cdot &  0 & 0
\end{pmatrix},
\ \ \ \ \ \ \ \ 
B = \begin{pmatrix} \star & \star & \cdot & \cdot & \cdot & \star & \star
\\
B_1 & \star & \cdot & \cdot & \cdot & \star & \star
\\
0 & B_2 & \star & \cdot & \cdot &  \star & \star
\\
\cdot & \cdot & \cdot & \cdot & \cdot & \cdot & \cdot
\\
\cdot & \cdot & \cdot & \cdot & \cdot & \cdot & \cdot
\\
\cdot & \cdot & \cdot & \cdot & \cdot & \cdot & \cdot
\\
0 & 0 & \cdot & \cdot & \cdot &  B_r & \star
\end{pmatrix},
\]
where $Q_0$ is a $p_0\times p_0$ non-singular matrix, and $B_j$ is a $ p_j \times p_{j-1}$ matrix having rank $p_j$, $j=1,...,r$, with $p_0\ge p_1\ge ... \ge p_r \ge 1$, and $p_0 + p_1 + ... + p_r = N$. The $\star$ blocks in the canonical form of $B$ can be arbitrary matrices.
\end{itemize}
\end{proposition}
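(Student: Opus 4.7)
The plan is to establish the chain (i) $\Leftrightarrow$ (ii) $\Leftrightarrow$ (iii) $\Leftrightarrow$ (iv) through direct algebraic manipulations, and then to derive (v) from these by constructing an adapted basis of $\RN$.

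For (i) $\Leftrightarrow$ (ii) I would start from the identity
\[
\langle K(t)\xi,\xi\rangle = \frac{1}{t}\int_0^t |Q^{1/2} e^{sB^\star}\xi|^2\,ds,
\]
which shows that $K(t)$ fails to be strictly positive at some $t>0$ iff there is a nonzero $\xi$ with $Q^{1/2} e^{sB^\star}\xi = 0$ for every $s\in [0,t]$. Since $s\mapsto Q^{1/2} e^{sB^\star}\xi$ is real-analytic, this vanishing propagates to all $s\in \R$, and the subspace $\operatorname{span}\{e^{sB^\star}\xi:s\in\R\}$ becomes a nontrivial $B^\star$-invariant subspace contained in $\operatorname{Ker} Q$. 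The converse direction is immediate by retracing the argument.

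For (ii) $\Leftrightarrow$ (iii), expanding $Qe^{sB^\star}\xi$ as a power series in $s$ and invoking Cayley--Hamilton reduces $Qe^{sB^\star}\xi\equiv 0$ to the finite system $Q(B^\star)^k\xi = 0$ for $k=0,\dots,N-1$. Since $Q=Q^\star$, this is the same as $\xi\perp\operatorname{Range}[Q,BQ,\dots,B^{N-1}Q]$, giving the equivalence. For (iii) $\Leftrightarrow$ (iv) a direct computation yields $[Y_0,Y_k] = -(BAe_k)^\top \nabla$; iterating, $\operatorname{ad}^j(Y_0)(Y_k)$ is the constant vector field associated with $(-B)^j A e_k$. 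Thus the Lie algebra spanned by $Y_0,Y_1,\dots,Y_N$ at any point has dimension equal to $\operatorname{Rank}[A,BA,\dots,B^{N-1}A]$, which coincides with $\operatorname{Rank}[Q,BQ,\dots,B^{N-1}Q]$ because $A=Q^{1/2}$ is symmetric with $\operatorname{Range}(A)=\operatorname{Range}(Q)$.

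For the derivation of (v), the strategy is to introduce the increasing filtration $V_0=\operatorname{Range}(Q)$, $V_{j+1} = V_j + BV_j$, set $p_0=\dim V_0$ and $p_j=\dim(V_j/V_{j-1})$, and choose a basis of $\RN$ by stacking bases of complements $W_j$ of $V_{j-1}$ in $V_j$. The Kalman condition (iii) is equivalent to $V_r=\RN$ for some $r$, which forces $p_0+\dots+p_r=N$. In this basis $Q$ takes the displayed block-diagonal form because $\operatorname{Range}(Q)=W_0$, while $Be\in V_j$ whenever $e\in W_{j-1}$, and this yields the prescribed almost-lower-triangular shape of $B$, with each subdiagonal block $B_j$ representing the induced linear map $W_{j-1}\to W_j$. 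The main obstacle I anticipate is verifying that this induced map is surjective, so that $\operatorname{Rank}(B_j)=p_j$: this must be extracted from the defining relation $V_j=V_{j-1}+BV_{j-1}$ through careful bookkeeping with quotient-space representatives, after which the monotonicity $p_0\ge p_1\ge\dots\ge p_r\ge 1$ will follow automatically from the surjectivity $W_{j-1}\twoheadrightarrow W_j$.
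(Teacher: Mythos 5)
The paper does not prove this proposition: it is stated as a known result with references to H\"ormander, Lanconelli--Polidoro, Lunardi and Lorenzi--Bertoldi, so there is no internal proof to compare against. Your outline is essentially the standard argument from that literature and is sound: the quadratic-form identity $\langle K(t)\xi,\xi\rangle=\frac1t\int_0^t|Q^{1/2}e^{sB^\star}\xi|^2\,ds$ together with analyticity gives (i)$\Leftrightarrow$(ii); Cayley--Hamilton and $\operatorname{Ker}Q^{1/2}=\operatorname{Ker}Q=\operatorname{Range}(Q)^\perp$ give (ii)$\Leftrightarrow$(iii); the commutator computation $[\,Y_0,v\cdot\nabla\,]=-(Bv)\cdot\nabla$ and $\operatorname{Range}(B^kA)=\operatorname{Range}(B^kQ)$ give (iii)$\Leftrightarrow$(iv); and the filtration $V_0=\operatorname{Range}Q$, $V_{j+1}=V_j+BV_j$ gives (v), with the surjectivity of the induced maps $W_{j-1}\to V_j/V_{j-1}$ following directly from $V_j=V_{j-1}+BV_{j-1}$ and $BV_{j-2}\subset V_{j-1}$, so the step you flag as the main obstacle is in fact immediate. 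Two details deserve tightening. First, in (iii)$\Leftrightarrow$(iv) the rank of the Lie algebra at a point $X$ with $BX\notin\operatorname{Range}[A,BA,\dots,B^{N-1}A]$ can exceed that rank by one, so the clean equivalence should be extracted by evaluating at $X=0$, where $Y_0$ vanishes and only the constant fields survive. Second, for (v) an arbitrary complement $W_j$ of $V_{j-1}$ in $V_j$ will \emph{not} in general make the off-diagonal blocks of $Q$ vanish; you must choose $W_j\subset\operatorname{Ker}Q$ for $j\ge1$, which is possible because $\RN=\operatorname{Range}Q\oplus\operatorname{Ker}Q$ for symmetric $Q\ge0$ forces $V_j=V_0\oplus(\operatorname{Ker}Q\cap V_j)$; with that choice $Q_0$ is the matrix of the injective map $Q|_{W_0}:W_0\to W_0$ and is therefore nonsingular.
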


Let us now suppose that in a given basis of $\RN$ the matrices $Q$ and $B$ are given as in (v) of Proposition \ref{P:equiv}. Recall that $Q_0$ is a $p_0\times p_0$ positive matrix. We form a new matrix $\bar B$ by replacing all the elements with a $\star$ in $B$ with a zero matrix of the same dimensions, i.e.,  
\[
\bar B = \begin{pmatrix} 0 & 0 & \cdot & \cdot & \cdot & 0 & 0
\\
B_1 & 0 & \cdot & \cdot & \cdot & 0 & 0
\\
0 & B_2 & \cdot & \cdot & \cdot &  0 & 0
\\
\cdot & \cdot & \cdot & \cdot & \cdot & \cdot & \cdot
\\
\cdot & \cdot & \cdot & \cdot & \cdot & \cdot & \cdot
\\
\cdot & \cdot & \cdot & \cdot & \cdot & \cdot & \cdot
\\
0 & 0 & \cdot & \cdot & \cdot &  B_r & 0
\end{pmatrix}.
\]
We recall that $B_j$ is a $p_j\times p_{j-1}$ matrix having rank $p_j$. If we denote by $X=\left(x^{(p_0)},x^{(p_1)},\ldots,x^{(p_r)}\right)$ the generic point of $\RN=\R^{p_0}\times\R^{p_1}\times\cdots\times\R^{p_r}$, then the differential operator associated to the matrices $Q$ and $\bar B$ is given by
\begin{equation}\label{Kbar}
\bar{\K} = \operatorname{tr}\left(Q_0 D^2_{x^{(p_0)}}\right) + \sum_{j=1}^r <B_j x^{(p_{j-1})},\nabla_{x^{(p_j)}} > - \de_t.
\end{equation}
The fact that the blocks $B_j$ have maximal rank allows to easily check the condition (iv) in Proposition \ref{P:equiv}, therefore also $\bar \K$ verifies the H\"ormander's condition (i) in Proposition \ref{P:equiv}, with a matrix $\bar K(t)$ defined as in \eqref{Kt} with $\bar B$ in place of $B$. Furthermore, $\bar \K$ is left-invariant with respect to the group law $\circ$ in Remark \ref{R:lie}, in which $B$ has been replaced by $\bar B$. We remark that $\operatorname{tr} \bar  B = 0$, and that $\bar B$ is nilpotent, therefore $e^{s \bar B}$ is in fact a finite sum. One important aspect of the operator $\bar \K$  is that, unlike $\K$, it possesses a homogeneous structure: it is invariant of degree $2$ with respect to the group of anisotropic dilations $\delta_\lambda : \RNu\longrightarrow\RNu$ defined by
\begin{equation}\label{sdl}
\delta_\lambda(X,t) = \left(\lambda
x^{(p_0)},\lambda^3
x^{(p_1)},\ldots,\lambda^{2r+1}x^{(p_r)},\lambda^2t\right).
\end{equation}
We mention that it was proved in \cite[Proposition 2.2]{LP} that a necessary and sufficient condition for the existence of a family of non-isotropic dilations $\delta_\lambda$ associated with the operator $\K$ in \eqref{K0} is that $B$ in (v) takes precisely the special form $\bar B$.
 The \emph{homogeneous dimension} of $(\RNu, \circ, \delta_\lambda)$ is given by
\begin{equation}\label{hd}
D_0+2=p_0+3p_1+\ldots+(2r+1)p_r+2.
\end{equation}

Returning to the general discussion, we consider the one-parameter group of anisotropic dilations $D_\lambda:\RN\longrightarrow\RN$ associated with the matrix $\bar B$ 
\begin{equation}\label{Dl}
\quad D_\lambda(X) = \left(\lambda
x^{(p_0)},\lambda^3
x^{(p_1)},\ldots,\lambda^{2r+1}x^{(p_r)}\right).
\end{equation}
The fact that $\delta_\lambda$ are group automorphisms with respect to $\circ$ is a consequence of the following commutation property valid for any $\lambda>0$ and $\tau\in\R$, 
$$e^{-\lambda^2\tau \bar B}=D_\lambda e^{-\tau \bar B}D_{\lambda^{-1}},$$
(see \cite[eq. (2.20)]{LP} and also \cite{K82}).
From this, and the fact that $\operatorname{tr} \bar B = 0$, one can see that the positive definite matrix $\bar K(t)$,  defined in \eqref{Kt} with $\bar B$ instead of $B$, satisfies 
\[
\operatorname{det}(t \bar K(t)) = t^{D_0} \operatorname{det}(\bar K(1)).
\]
Denoting with $\bar V(t)$ the volume of the pseudoballs $\bar B_t(X,\sqrt t)$ associated with $\bar K$, we thus conclude that we must have for every $t>0$,
\begin{equation}\label{osc}
\bar V(t) = c_N \operatorname{det}(\bar K(1))^{1/2}\ t^{D_0/2} = \gamma\ t^{D_0/2}.
\end{equation}
The result in \cite[eq. (3.14) and Remark 3.1]{LP} gives us the following asymptotic.

\begin{proposition}\label{P:KbarK}
One has
$\underset{t\to 0^+}{\lim}\ \frac{V(t)}{\bar V(t)} = 1$.
\end{proposition}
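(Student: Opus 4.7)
The plan is to reduce the claim, via the anisotropic dilations $D_\lambda$ associated with $\bar B$, to the pointwise convergence of a rescaled matrix. The key identity to exploit is the one already recorded in the excerpt: with $\lambda=\sqrt t$, one has $D_{1/\sqrt t}\,e^{t\sigma\bar B}\,D_{\sqrt t}=e^{\sigma\bar B}$ for every $\sigma,t>0$. Since $Q$ is supported on the top-left $p_0\times p_0$ block $Q_0$, where $D_\lambda$ acts by the scalar $\lambda$, one also has $D_{1/\sqrt t}\,Q\,D_{1/\sqrt t}^\star = t^{-1}Q$. Inserting these two identities into the change of variable $s=t\sigma$ in $\bar K(t)=\frac{1}{t}\int_0^t e^{s\bar B}Q e^{s\bar B^\star}ds$ yields the exact scaling $\bar K(t)=t^{-1}D_{\sqrt t}\,\bar K(1)\,D_{\sqrt t}^\star$, and taking determinants recovers \eqref{osc}, namely $\det(t\bar K(t))=t^{D_0}\det\bar K(1)$.

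Now I would perform the same rescaling on $K(t)$, defining
\begin{equation*}
L(t)\ \overset{def}{=}\ t\,D_{1/\sqrt t}\,K(t)\,D_{1/\sqrt t}^\star\ =\ \int_0^1 \Big(D_{1/\sqrt t}\,e^{t\sigma B}\,D_{\sqrt t}\Big)\,Q\,\Big(D_{\sqrt t}^\star e^{t\sigma B^\star} D_{1/\sqrt t}^\star\Big)\,d\sigma,
\end{equation*}
where again I used $D_{1/\sqrt t}\,Q\,D_{1/\sqrt t}^\star=t^{-1}Q$ to move the dilations inside. Since $(D_{1/\sqrt t}\,M\,D_{\sqrt t})^k=D_{1/\sqrt t}\,M^k\,D_{\sqrt t}$, we have $D_{1/\sqrt t}\,e^{t\sigma B}\,D_{\sqrt t}=\exp\!\big(D_{1/\sqrt t}(t\sigma B)D_{\sqrt t}\big)$, and the crux is to understand this rescaled matrix as $t\to 0^+$. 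Writing $B=\bar B+R$, where $R$ collects the $\star$ blocks in position (v) of Proposition \ref{P:equiv}, a direct block computation using the block-diagonal form of $D_\lambda$ (with entries $\lambda^{2j+1}I_{p_j}$) shows that the $(i,j)$ block of $D_{1/\sqrt t}\,M\,D_{\sqrt t}$ equals $t^{j-i}M_{ij}$. The nonzero blocks of $\bar B$ sit at positions $(i,i-1)$, so $D_{1/\sqrt t}(t\bar B)D_{\sqrt t}=\bar B$; the nonzero $\star$ blocks of $R$ sit only at positions $(i,j)$ with $j\ge i$, so $D_{1/\sqrt t}(tR)D_{\sqrt t}$ has $(i,j)$ block $t^{j-i+1}R_{ij}$, each of which tends to $0$ as $t\to 0^+$ since $j-i+1\ge 1$. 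Consequently $D_{1/\sqrt t}(t\sigma B)D_{\sqrt t}\to \sigma\bar B$ uniformly in $\sigma\in[0,1]$.

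By continuity of the matrix exponential and dominated convergence (the rescaled integrand is uniformly bounded on $t\in(0,1]$, $\sigma\in[0,1]$), it follows that
\begin{equation*}
L(t)\ \longrightarrow\ \int_0^1 e^{\sigma\bar B}\,Q\,e^{\sigma\bar B^\star}\,d\sigma\ =\ \bar K(1)\qquad\text{as } t\to 0^+.
\end{equation*}
Inverting the scaling gives $tK(t)=D_{\sqrt t}\,L(t)\,D_{\sqrt t}^\star$ and therefore $\det(tK(t))=(\det D_{\sqrt t})^2\det L(t)=t^{D_0}\det L(t)$. Combining with the exact formula $\det(t\bar K(t))=t^{D_0}\det\bar K(1)$, we obtain
\begin{equation*}
\frac{V(t)}{\bar V(t)}\ =\ \left(\frac{\det L(t)}{\det \bar K(1)}\right)^{1/2}\ \longrightarrow\ 1,
\end{equation*}
using that $\bar K(1)>0$ by the H\"ormander condition for $\bar{\K}$. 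The main obstacle is the block-matrix bookkeeping of Step 2: one must verify carefully that every $\star$ block of $R$, after the combined scaling by $t$ on the left and $D_{\sqrt t}$, $D_{1/\sqrt t}$ on the two sides, contributes a factor $t^{j-i+1}$ with $j-i+1\ge 1$. This is precisely where the very specific location of the $\star$ blocks on or above the block diagonal of $B$ in (v) is essential, and it is what singles out $\bar B$ as the correct osculating principal part of $B$ relative to the dilations $D_\lambda$.
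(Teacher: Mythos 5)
Your argument is correct, and it is worth noting that the paper does not actually prove Proposition \ref{P:KbarK}: it simply quotes the asymptotic from \cite[eq. (3.14) and Remark 3.1]{LP}. What you have written is therefore a self-contained reconstruction of that result. The mechanism is the right one: conjugating by $D_{\sqrt t}$ and using that the block $(i,j)$ of $D_{1/\sqrt t}MD_{\sqrt t}$ is $t^{j-i}M_{ij}$, the subdiagonal blocks $B_i$ of $\bar B$ (at positions $(i,i-1)$) are exactly compensated by the factor $t$ in $t\sigma B$, while the $\star$ blocks of $R=B-\bar B$, sitting at positions $(i,j)$ with $j\ge i$, pick up $t^{j-i+1}\to 0$; this is precisely why $\bar B$ is the osculating principal part. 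Your identities $D_{1/\sqrt t}QD_{1/\sqrt t}^\star=t^{-1}Q$ and $D_{1/\sqrt t}(t\bar B)D_{\sqrt t}=\bar B$ check out against the commutation relation $e^{-\lambda^2\tau\bar B}=D_\lambda e^{-\tau\bar B}D_{\lambda^{-1}}$ recalled in the paper, the limit $L(t)\to\bar K(1)$ follows from uniform convergence of the integrand on $\sigma\in[0,1]$, and $\det D_{\sqrt t}=t^{D_0/2}$ together with $\bar K(1)>0$ (Hörmander's condition for $\bar\K$) closes the argument. Compared with simply invoking \cite{LP}, your proof buys a transparent, purely linear-algebraic derivation of the small-time volume asymptotic; it is essentially the same scaling philosophy as in \cite{LP}, but packaged as a clean statement about the convergence $tD_{1/\sqrt t}K(t)D_{1/\sqrt t}^\star\to\bar K(1)$, which is in fact slightly stronger than the determinant asymptotic needed here.
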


Proposition \ref{P:KbarK} and \eqref{osc} motivate the following.

\begin{definition}\label{D:D0}
We call the number $D_0$ in \eqref{hd} the \emph{intrinsic dimension at zero} of the H\"ormander semigroup $\{P_t\}_{t>0}$. Note that it follows from \eqref{hd} that it must be $D_0\ge N\ge 2$.
\end{definition}

 
 \section{Large time behaviour of the volume function and ultracontractivity}\label{S:volume}

The analysis of the semigroup $\{P_t\}_{t>0}$ revolves on the large time behaviour of  the volume function $V(t)$. In this section we analyse this behaviour under the assumption \eqref{trace}. Our main result, Proposition \ref{P:boom}, plays a pervasive role in the rest of the paper since: 1) it shows that $V(t)$ grows at infinity at least linearly; and, 2) it says that when at least one of the eigenvalues of the drift matrix $B$ has a strictly positive real part, then $V(t)$ must blow up exponentially.
In what follows we will make use of the equivalence (i)\ $\Longleftrightarrow$\ (ii) in Proposition \ref{P:equiv}. The notation $\sigma(B)$ indicates the spectrum of $B$. 

\begin{proposition}\label{P:boom}
Suppose that \eqref{trace} hold. Then:
\begin{itemize}
\item[(i)] there exists a constant $c_1>0$ such that $V(t)\geq c_1 t$ for all $t\geq 1$;
\item[(ii)] moreover, if $\max\{\Re(\lambda)\mid \lambda\in \sigma(B)\}=L_0>0$, there exists a constant $c_0$ such that $V(t)\geq c_0 e^{L_0 t}$ for all $t\geq 1.$
\end{itemize}
\end{proposition}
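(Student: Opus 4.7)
Here is my plan. I would treat the two parts via quite different arguments: a determinant iteration for (i), and a spectral/quadratic-form bound for (ii).

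For part (i), I start from the identity obtained by splitting the integral defining $tK(t)$ in \eqref{Kt} at the midpoint and shifting variables in the second half:
\[
tK(t) \;=\; \tfrac{t}{2}K(\tfrac{t}{2}) \;+\; e^{(t/2)B}\,\tfrac{t}{2}K(\tfrac{t}{2})\,e^{(t/2)B^\star}.
\]
Both summands are symmetric positive definite by hypoellipticity. Applying Minkowski's determinant inequality and using $\det(e^{(t/2)B})=e^{(t/2)\operatorname{tr} B}$ together with assumption \eqref{trace}, I get
\[
\det(tK(t))^{1/N} \;\ge\; \det\bigl(\tfrac{t}{2}K(\tfrac{t}{2})\bigr)^{1/N}\bigl(1+e^{(t/N)\operatorname{tr} B}\bigr) \;\ge\; 2\,\det\bigl(\tfrac{t}{2}K(\tfrac{t}{2})\bigr)^{1/N}.
\]
Iterating this halving $k=\lfloor\log_2 t\rfloor$ times brings us down to a time $t/2^k\in[1,2)$, on which $\det(sK(s))$ is bounded below by a positive constant $\delta_0$ (continuity plus positive definiteness on the compact interval $[1,2]$). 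This yields $\det(tK(t))\ge \delta_0\,(t/2)^N$, and since $N\ge 2$ by hypothesis, $V(t)\ge c_1 t$ for all $t\ge 1$.

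For part (ii), the key observation is that $tK(t)-K(1)=\int_1^t e^{sB}Q\,e^{sB^\star}ds\ge 0$ for $t\ge 1$, so $\lambda_{\min}(tK(t))\ge \lambda_{\min}(K(1))>0$; consequently
\[
\det(tK(t)) \;\ge\; \lambda_{\max}(tK(t))\cdot \lambda_{\min}(K(1))^{N-1}.
\]
It therefore suffices to exhibit a single real vector $w$ with $w^\star tK(t) w\gtrsim e^{2L_0 t}$. I pick $\mu\in\sigma(B)$ with $\Re\mu=L_0$, so that $\bar\mu\in\sigma(B^\star)$, and take a complex eigenvector $z=u+iv\in\mathbb{C}^N$ of $B^\star$; when $b:=\Im\mu\neq 0$ the vectors $u,v\in\R^N$ are automatically $\R$-linearly independent (otherwise $\bar\mu$ would be a real eigenvalue of $B^\star$), while for $b=0$ we take $v=0$ and $u\neq 0$ real. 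Setting $w=u$, direct computation on the $B^\star$-invariant plane $W=\operatorname{span}(u,v)$ gives $e^{sB^\star}u=e^{sL_0}(\cos(sb)u+\sin(sb)v)$, hence
\[
w^\star tK(t) w \;=\; \int_0^t |Q^{1/2}e^{sB^\star}u|^2\,ds \;=\; \int_0^t e^{2sL_0}\, g(s)\,ds,
\]
with $g(s):=|\cos(sb)Q^{1/2}u+\sin(sb)Q^{1/2}v|^2\ge 0$ either constant ($b=0$) or $(\pi/b)$-periodic.

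The main obstacle is ruling out $g\equiv 0$: were this the case, $Q^{1/2}u=Q^{1/2}v=0$, i.e.\ $W\subset \operatorname{Ker} Q$, contradicting the hypoellipticity characterization in Proposition \ref{P:equiv}(ii) because $W$ is a nontrivial $B^\star$-invariant subspace. Hence $I_0:=\int_0^T g(s)\,ds>0$ on one full period $T$. Partitioning $[0,t]$ into full periods and exploiting the periodicity of $g$ converts $\int_0^t e^{2sL_0}g(s)\,ds$ into a geometric sum with ratio $e^{2TL_0}>1$, producing $\int_0^t e^{2sL_0}g(s)\,ds\ge c\,e^{2L_0 t}$ for $t$ sufficiently large; a compactness bound over the transitional interval extends this to all $t\ge 1$. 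Combined with the spectral inequality above, this yields $V(t)\ge c_0\,e^{L_0 t}$ and completes (ii).
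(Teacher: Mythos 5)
Your proposal is correct, and it splits interestingly against the paper's proof: part (ii) follows essentially the same strategy, while part (i) is a genuinely different and arguably cleaner argument. For (ii), both you and the paper (a) use the monotonicity of $t\mapsto tK(t)$ to bound the remaining $N-1$ eigenvalues below by $\lambda_{\min}(K(1))>0$, (b) test the quadratic form on a vector $u$ spanning (with $v$) a $B^\star$-invariant line or plane attached to an eigenvalue of maximal real part $L_0$, and (c) invoke Proposition \ref{P:equiv}(ii) to rule out that this invariant subspace lies in $\operatorname{Ker}Q$; the only difference is that the paper computes $\int_0^t e^{2L_0 s}\big(\cos^2(b_1s)\langle Qv,v\rangle+\cdots\big)\,ds$ explicitly and then argues the leading coefficient is bounded away from zero, whereas you exploit the $(\pi/|b|)$-periodicity of $g(s)=e^{-2L_0 s}\,|Q^{1/2}e^{sB^\star}u|^2$ to reduce to a geometric series, avoiding the trigonometric bookkeeping. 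For (i), the paper passes to the real Jordan form of $B^\star$, observes that \eqref{trace} forces either $L_0>0$ (so (ii) applies) or all eigenvalues to have zero real part, and in the latter case runs a three-way case analysis (two independent null eigenvectors, a nilpotent block of size $\ge 2$, or a rotation block) to get $\det(tK(t))\gtrsim t^2$. Your halving identity
\begin{equation*}
tK(t)=\tfrac t2K\big(\tfrac t2\big)+e^{(t/2)B}\,\tfrac t2 K\big(\tfrac t2\big)\,e^{(t/2)B^\star},
\end{equation*}
combined with Minkowski's determinant inequality and $\det e^{(t/2)B}=e^{(t/2)\operatorname{tr}B}\ge 1$, bypasses the Jordan form and the case split with (ii) entirely, works uniformly in $B$, makes transparent exactly where \eqref{trace} enters, and in fact yields the stronger conclusion $V(t)\ge c\,t^{N/2}$ for $t\ge 1$, which implies the stated linear growth since $N\ge 2$. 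All the individual steps check out (the split-and-shift identity, the positive semidefiniteness of both summands needed for Minkowski, the compactness lower bound for $\det(sK(s))$ on $[1,2]$, and the non-vanishing of $g$ via Proposition \ref{P:equiv}(ii)), so the argument is complete.
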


\begin{proof}
As it will be evident from the proof, we first establish (ii) and then (i). Up to a change of variables in $\R^N$, we can assume that $B^*$ is in the following block-diagonal real Jordan canonical form (see, e.g., \cite[Theorem 3.4.5]{HJ})
$$B^\star=\begin{pmatrix}
J_{n_1}(\lambda_1) &  &  &  &  &  & 
\\
 & \ddots &  &  & 0 &   & 
\\
 &  & J_{n_q}(\lambda_q) &  &  &  & \\
 &  &  & C_{m_1}(a_1,b_1) &  &  & 
\\
 &  & 0 &  &  & \ddots & 
\\
 &  &  &  &  &   & C_{m_p}(a_p,b_p)
\end{pmatrix},$$
where $\sigma(B)=\sigma(B^\star)=\{\lambda_1,\ldots,\lambda_q,a_1\pm ib_1,\ldots, a_p\pm ib_p\}$ with $\lambda_k, a_\ell, b_\ell\in \R$ ($b_\ell\neq 0$), $n_1+\ldots+n_q+2m_1+\ldots+2m_p=N$ with $n_k, m_\ell\in \N$, and the $n_k\times n_k$ matrix $J_{n_k}(\lambda_k)$ and the $2m_\ell\times 2m_\ell$ matrix $C_{m_\ell}(a_\ell,b_\ell)$
are respectively in the form
$$J_{n_k}(\lambda_k)=\begin{pmatrix}
\lambda_k & 1 & 0 & \ldots & 0  \\
0 & \lambda_k & 1 & \ldots & 0  \\
0 & 0 & \ddots & \ddots & 0  \\
0 & \ldots & 0 & \lambda_k & 1  \\
0 & 0 & \ldots & 0 & \lambda_k 
\end{pmatrix},\quad C_{m_\ell}(a_\ell,b_\ell)=\begin{pmatrix}
a_\ell & -b_\ell & 1 & 0 & 0 & \ldots & \ldots & 0  \\
b_\ell & a_\ell & 0 & 1 & 0 & \ldots & \ldots & 0  \\
0 & 0 & a_\ell & -b_\ell & 1 & 0 & \ldots & 0  \\
0 & 0 & b_\ell & a_\ell & 0 & 1 & \ldots & 0 \\
\vdots & \vdots & 0 & 0 & \ddots & \ddots & 1 & 0 \\
\vdots & \vdots & 0 & 0 & \ddots & \ddots & 0 &  1\\
0 & 0 & \ldots & \ldots & 0 & 0 & a_\ell & -b_\ell \\
0 & 0 & \ldots & \ldots & 0 & 0 & b_\ell & a_\ell \\
\end{pmatrix}.
$$
Since $\operatorname{tr} B=\sum_{k=1}^q \lambda_k + 2\sum_{\ell=1}^p a_\ell \ge 0$, we have two cases:
$$\mbox{either}\qquad L_0=\max\{\lambda_k,a_\ell\}>0\qquad \mbox{or}\qquad \lambda_k=0=a_\ell\,\,\forall k\in\{1,\ldots,q\},\, \ell\in\{1,\ldots,p\}.$$
Suppose $L_0>0$. We are going to show that, for some $C_0>0$, we have
\begin{equation}\label{detL0}
\operatorname{det}\left(tK(t)\right)\geq C_0 e^{2L_0 t}\qquad\mbox{for all }t\geq 1.
\end{equation}
To do this, it is enough to show that
\begin{equation}\label{lmaxL0}
\lambda_M(t)\geq C e^{2L_0 t}\qquad\mbox{for all }t\geq 1,
\end{equation}
where $\lambda_M(t)$ is the largest eigenvalue of $tK(t)$. In fact, since $t\mapsto tK(t)$ is monotone increasing in the sense of matrices, for $t\geq 1$ all the eigenvalues of $tK(t)$ are larger than the minimum eigenvalue of $K(1)$, which is strictly positive by H\"ormander condition: this tells us that \eqref{lmaxL0} implies \eqref{detL0}. To prove \eqref{lmaxL0}, we notice that at least one of the following two possibilities occurs:
\begin{itemize}
\item[(a)] there is $k_0\in\{1,\ldots,q\}$ such that $\lambda_{k_0}=L_0$;
\item[(b)] there is $\ell_0\in\{1,\ldots,p\}$ such that $a_{\ell_0}=L_0$.
\end{itemize}
Suppose case $(a)$ occurs. It is not restrictive to assume $k_0=1$. Then, $v_0=(1,0,\ldots,0)\in\RN$ is an eigenvector for $B^*$ with relative eigenvalue $L_0$. Thus $e^{sB^\star}v_0=e^{L_0 s} v_0$, for all $s\in\R$. From (ii) in Proposition \ref{P:equiv} we know that $v_0\notin \operatorname{Ker} Q$, i.e. $\left\langle Q v_0,v_0\right\rangle>0$. Therefore, we have
$$\lambda_M(t)\geq \left\langle tK(t) v_0,v_0\right\rangle =\int_0^t \left\langle Q e^{sB^\star}v_0, e^{sB^\star}v_0\right\rangle\,ds = \left\langle Q v_0,v_0\right\rangle \int_0^t e^{2L_0 s}\,ds=\frac{e^{2L_0t}-1}{2L_0}\left\langle Q v_0,v_0\right\rangle,$$
which proves \eqref{lmaxL0} in case $(a)$. Suppose now case $(b)$ occurs. As before, let us assume also $\ell_0=1$ (that is $a_1=L_0$). If $e_1,e_2\in\R^{2m_1}$ are the vectors $e_1=(1,0,\ldots,0)$, $e_2=(0,1,0,\ldots,0)$, we denote the correspondent vectors in $\RN$ by $v_1=(0,\ldots,0,e_1,0,\ldots,0)$, $v_2=(0,\ldots,0,e_2,0,\ldots,0)$. With these notations, we have that $\operatorname{span}\{v_1,v_2\}$ is an invariant subspace for $B^\star$. From (ii) in Proposition \ref{P:equiv} we know that $\operatorname{span}\{v_1,v_2\}$ is not contained in $\operatorname{Ker} Q$. Moreover, denoting by $J$ the simplectic matrix restricted to $\operatorname{span}\{v_1,v_2\}$ such that $Jv_1=v_2$ and $Jv_2=-v_1$, we have 
\begin{equation}\label{expJ}
e^{sB^\star}v= e^{L_0 s}\left(\cos(b_1 s) v + \sin(b_1 s)Jv\right)\quad\mbox{ for all }s\in\R\mbox{ and for any }v\in \operatorname{span}\{v_1,v_2\}.
\end{equation}
Hence, for $v\in\operatorname{span}\{v_1,v_2\}$, we have
\begin{align*}
&\lambda_M(t)\geq \left\langle tK(t) v,v\right\rangle =\int_0^t \left\langle Q e^{sB^\star}v, e^{sB^\star}v\right\rangle\,ds \\
&= \int_0^t e^{2L_0 s}\left(\cos^2(b_1 s) \left\langle Qv,v\right\rangle + \sin^2(b_1 s) \left\langle QJv,Jv\right\rangle  + \sin(2b_1 s) \left\langle Qv,Jv\right\rangle\right)\,ds\\
&=\frac{e^{2L_0t}}{4L_0}\left(\left\langle Q\left(\cos(b_1t)v+\sin(b_1t)Jv\right),\left(\cos(b_1t)v+\sin(b_1t)Jv\right) \right\rangle + \frac{1}{L_0^2+b_1^2}\left\langle Q v_t,v_t\right\rangle \right)\\
&-\frac{1}{4L_0}\left(\left\langle Qv,v\right\rangle + \left\langle QJv,Jv\right\rangle\right)-\frac{L_0}{4(L_0^2+b_1^2)}\left(\left\langle Qv,v\right\rangle - \left\langle QJv,Jv\right\rangle\right)+\frac{b_1}{2(L_0^2+b_1^2)}\left\langle Qv,Jv\right\rangle,
\end{align*}
where we have denoted $v_t=(L_0\cos(b_1t)+b_1\sin(b_1t))v-(b_1\cos(b_1t)-L_0\sin(b_1t))Jv$. The fact that $v$ and $Jv$ cannot belong both to $\operatorname{Ker} Q$ implies that, for any $t>0$, also $v_t$ and $\cos(b_1t)v+\sin(b_1t)Jv$ cannot be in $\operatorname{Ker} Q$ at the same time. This says, since $Q\geq 0$, that
$$\left\langle Q\left(\cos(b_1t)v+\sin(b_1t)Jv\right),\left(\cos(b_1t)v+\sin(b_1t)Jv\right) \right\rangle + \frac{1}{L_0^2+b_1^2}\left\langle Q v_t,v_t\right\rangle \geq \bar{c}$$
for some positive $\bar{c}$, from which we can deduce
$$\lambda_M(t)\geq \frac{\bar{c}}{8L_0}e^{2L_0t}\quad\mbox{ for large t}.$$
This proves \eqref{lmaxL0} and concludes case $(b)$. This establishes (ii) in the statement of the proposition.\\
We next turn to proving (i). Suppose that
$$\lambda_k=0=a_\ell\,\,\forall k\in\{1,\ldots,q\},\, \ell\in\{1,\ldots,p\}.$$
Since $N\geq 2$, at least one of the following possibilities must occur:
\begin{itemize}
\item[(1)] there are two linearly independent eigenvectors of $B^*$ with eigenvalue $0$;
\item[(2)] there exists $k_0\in\{1,\ldots,p\}$ such that $n_{k_0}\geq 2$;
\item[(3)] there exists $\ell_0\in\{1,\ldots,p\}$ such that $m_{\ell_0}\geq 1$.
\end{itemize}
In each of these three cases we are going to show that, for some $C_1>0$, we have
\begin{equation}\label{dettutto0}
\operatorname{det}\left(tK(t)\right)\geq C_1 t^2\qquad\mbox{for all }t\geq 1.
\end{equation}
The case $(1)$ is the easiest since we have two linearly independent eigenvectors $v_{0,1}, v_{0,2}$ such that $e^{sB^\star}v_{0,j}=v_{0,j}$ for all $s\in\R$ and $j\in\{1,2\}$. Furthermore, for any $v\in \operatorname{span}\{v_{0,1},v_{0,2}\}$, $\left\langle Qv,v\right\rangle>0$ since the eigenvectors cannot belong to $\operatorname{Ker} Q$ by (ii). Hence,
$$\left\langle tK(t)v,v\right\rangle=\int_0^t \left\langle Q e^{sB^\star}v, e^{sB^\star}v\right\rangle\,ds = t \left\langle Qv,v\right\rangle$$
for all $t>0$ and $v\in \operatorname{span}\{v_{0,1},v_{0,2}\}$. Then, the symmetric matrix $tK(t)$ has at least two eigenvalues growing as $t$. By monotonicity the other eigenvalues are bounded below by a positive constant for $t\geq 1$. This yields \eqref{dettutto0}.\\
Suppose case $(2)$ occurs. Again, it is not restrictive to assume $k_0=1$. Then, we have $n_1\geq 2$ and we know that
$$e^{sJ_{n_1}(0)} =\begin{pmatrix}
1 & s & \frac{s^2}{2} & \ldots & \frac{s^{n_1-1}}{(n_1-1)!}  \\
0 & 1 & s & \ddots &  \frac{s^{n_1-2}}{(n_1-2)!} \\
0 & 0 & \ddots & \ddots & \ddots  \\
0 & \ldots & 0 & 1 & s  \\
0 & 0 & \ldots & 0 & 1 
\end{pmatrix}.$$
From this we infer that $e^{sB^\star}e_{n_1}=\frac{s^{n_1-1}}{(n_1-1)!} e_1 + o(s^{n_1-1})$ as $s\rightarrow+\infty$, where we have denoted by $e_j$ the canonical basis of $\RN$. We recall that, being $e_1$ an eigenvector for $B^\star$, we have $\left\langle Qe_1,e_1\right\rangle>0$. Therefore, we obtain
$$\left\langle Q e^{sB^\star}e_{n_1}, e^{sB^\star}e_{n_1}\right\rangle= \left\langle Q e_{1}, e_{1}\right\rangle \frac{s^{2n_1-2}}{(n_1-1)!^2}+ o(s^{2n_1-2})$$
as $s\rightarrow+\infty$. In particular, for $s\geq s_0$, we deduce
$$\left\langle Q e^{sB^\star}e_{n_1}, e^{sB^\star}e_{n_1}\right\rangle\geq c s^{2n_1-2},$$
for some positive constant $c$. This implies
$$\left\langle tK(t) e_{n_1},e_{n_1}\right\rangle\geq \int_{s_0}^t \left\langle Q e^{sB^\star}e_{n_1}, e^{sB^\star}e_{n_1}\right\rangle\,ds \geq c\int_{s_0}^t s^{2n_1-2}\,ds\geq \tilde{c}t^{2n_1-1} $$
for large $t$. This tells us that $\lambda_M(t)\geq \tilde{c}t^{2n_1-1}$, from which \eqref{dettutto0} follows since $n_1\geq 2$.\\
We are left with case $(3)$. As before, assume $\ell_0=1$ and denote by $v_1=(0,\ldots,0,e_1,0,\ldots,0)$, $v_2=(0,\ldots,0,e_2,0,\ldots,0)$ the vectors in $\RN$ which correspond to the canonical vectors $e_1, e_2\in\R^{2m_1}$. We have that $\operatorname{span}\{v_1,v_2\}$ is an invariant subspace for $B^\star$. From (ii) in Proposition \ref{P:equiv} we know that $\operatorname{span}\{v_1,v_2\}$ is not contained in $\operatorname{Ker} Q$. Thus, at least one of $\left\langle Qv_1,v_1\right\rangle$ and $\left\langle Qv_2,v_2\right\rangle$ is strictly positive.
With the same notations as in \eqref{expJ}, for $v\in\operatorname{span}\{v_1,v_2\}$ we find
$$e^{sB^\star}v= \cos(b_1 s) v + \sin(b_1 s)Jv.$$
Hence, for all $t>0$ we have
\begin{align*}
&\left\langle tK(t) v,v\right\rangle =\int_0^t \left\langle Q e^{sB^\star}v, e^{sB^\star}v\right\rangle\,ds \\
&=\left\langle Qv,v\right\rangle \int_0^t \cos^2(b_1 s)\,ds +\left\langle QJv,Jv\right\rangle \int_0^t \sin^2(b_1 s)\,ds +2\left\langle Qv,Jv\right\rangle \int_0^t \cos(b_1 s)\sin(b_1 s)\,ds \\
&=\left(\frac{t}{2}+\frac{\sin(2b_1t)}{4b_1}\right)\left\langle Qv,v\right\rangle+\left(\frac{t}{2}-\frac{\sin(2b_1t)}{4b_1}\right)\left\langle QJv,Jv\right\rangle +\frac{1-\cos(2b_1t)}{2b_1}\left\langle Qv,Jv\right\rangle. 
\end{align*}
Exploiting the fact that $\left\langle Qv,v\right\rangle + \left\langle QJv,Jv\right\rangle=\left\langle Qv_1,v_1\right\rangle + \left\langle Qv_2,v_2\right\rangle>0$, we can make the quantity $\left\langle tK(t) v,v\right\rangle\geq c t$ for large $t$ and for any $v\in\operatorname{span}\{v_1,v_2\}$. This shows the validity of \eqref{dettutto0}, thus completing the proof.

\end{proof}

Proposition \ref{P:boom} has the following basic consequence.
 
\begin{corollary}\label{C:infty}
Suppose that \eqref{trace} hold. 
Then, $\lim_{t\rightarrow \infty} V(t) = \infty.$
\end{corollary}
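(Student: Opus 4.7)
The plan is simply to invoke part (i) of Proposition \ref{P:boom}. Under the standing assumption $\operatorname{tr} B \ge 0$, that proposition already supplies a constant $c_1 > 0$ such that
\[
V(t) \ge c_1 t \qquad \text{for all } t \ge 1.
\]
The conclusion $\lim_{t \to \infty} V(t) = \infty$ is then immediate by letting $t \to \infty$ on the right-hand side.

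There is essentially nothing more to do: the corollary is a one-line consequence of the linear lower bound established in Proposition \ref{P:boom}(i), and one does not even need the sharper exponential bound in Proposition \ref{P:boom}(ii) (which would of course give the same conclusion whenever $\max\{\Re(\lambda) : \lambda \in \sigma(B)\} > 0$). Thus there is no real obstacle; the work has already been done in proving the proposition. In particular, one does not need to split into the two spectral cases here — part (i) holds regardless of whether some eigenvalue of $B$ has strictly positive real part, and it is already strong enough to force divergence.

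If one wished to be slightly more explicit, one could note that for any $M > 0$, choosing $t \ge \max\{1, M/c_1\}$ yields $V(t) \ge c_1 t \ge M$, which is the definition of $\lim_{t \to \infty} V(t) = \infty$. This completes the proof.
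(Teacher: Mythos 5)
Your proof is correct and follows essentially the same route as the paper, which likewise deduces the corollary immediately from Proposition \ref{P:boom} (the paper additionally remarks on the monotonicity of $t\mapsto V(t)$, but as you observe, the linear lower bound $V(t)\ge c_1 t$ for $t\ge 1$ already forces divergence on its own). Nothing is missing.
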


\begin{proof}
Recalling that $t\to t K(t)$ is monotone increasing in the sense of matrices, we have that $t\to V(t)$ is also a monotone function. Then, the conclusion immediately follows from Proposition \ref{P:boom}.
\end{proof}

\subsection{Intrinsic dimension at infinity}\label{SS:Dinfty}

In dealing with the general class \eqref{K0} the first question that comes to mind is: what number occupies the role of the dimension $N$ in the analysis of the semigroup $\{P_t\}_{t>0}$? This question is central since, as one can see in fig.\ref{fig}, the behaviour for large times of the volume function $V(t) = \operatorname{Vol}_N(B_t(X,\sqrt t))$ can be quite diverse, depending on the structure of the matrix $B$, and in fact non-doubling in general. The next definition introduces a notion which allows to successfully handle this matter. 

\begin{definition}\label{D:hld}
Consider the set $\Sigma_\infty \overset{def}{=} \big\{\alpha>0\big| \int_1^\infty \frac{t^{\alpha/2-1}}{V(t)} dt < \infty\big\}$.
We call the number  $D_\infty = \sup \Sigma_\infty$ the \emph{intrinsic dimension at infinity} of the semigroup $\{P_t\}_{t>0}$. 
\end{definition}

When $\Sigma_\infty = \varnothing$ we set $D_\infty = 0$. If $\Sigma_\infty \not= \varnothing$ we clearly have $0<D_\infty\le \infty$.

\begin{remark}\label{R:volumes}
Some comments are in order:
\begin{itemize}
\item[(1)] when $\sA = \Delta$, the standard Laplacian in $\RN$, then $V(t) = \omega_N t^{N/2}$. In such case, $\alpha\in \Sigma_\infty$ if and only if $0<\alpha<N$, and thus $D_\infty = N$;
\item[(2)] if the operator $\K$ in \eqref{K0} admits a homogeneous structure (as $\bar{\K}$ in subsection \ref{SS:D0}), we have $V(t) = \gamma_N t^{D_0/2}$, where $D_0$ is the intrinsic dimension at zero of the semigroup. In such case, we have $D_\infty = D_0$;
\item[(3)] more in general, if there exist constants $T, \gamma, D>0$ such that $V(t) \ge \gamma_D\ t^{D/2}$ for every $t\ge T$, then we must have $(0,D)\subset \Sigma_\infty$, and therefore $D_\infty\ge D$; from this observation and (i) in Proposition \ref{P:boom} we infer $D_\infty\ge 2$;
\item[(4)] the reader should note that the assumption $\Sigma_\infty \not= \varnothing$ hides a condition on the matrix $B$ in \eqref{K0}. For instance, when $Q = I_N$ and $B = - I_N$, then $\K u= \Delta u - <X,\nabla u> - \p_t u$ is the Ornstein-Uhlenbeck operator, see Ex.2 in fig.\ref{fig} below. In such case we have $V(t) = c_N (1-e^{-2t})^N$, and therefore in particular $\underset{t\to \infty}{\lim}\ V(t) = c_N>0$. It follows that $\Sigma_\infty = \varnothing$ and thus $D_\infty = 0$. The same happens with the Smoluchowski-Kramers' operator in Ex.5 below. In both cases the theory developed in this paper does not apply (we will return to this aspect in a future study);
\item[(5)] it can happen that $D_\infty < D_0$, see Ex.4;
\item[(6)] finally, one can have $D_\infty = \infty$, see Ex.$6^+$.
\end{itemize}
\end{remark}

In the following table we illustrate the different behaviours of the volume function $V(t)$ on a significant sample of operators. The items in red refer to situations in which the drift matrix satisfies $\operatorname{tr}(B)\ge 0$. This is the situation covered by this paper.

\vskip 0.2in 

\begin{figure}[htbp]
\begin{tabular}{| c | c | c | c | c | c | c | }
\hline \vphantom{$\left(\frac{t^2}{4}+\frac{1}{8}\right)^{\frac{1}{2}}$} Ex. &  {\color{red}$\K$} & {\color{red}$V(t)$} & {\color{red}$\operatorname{tr}(B)$}  & {\color{red}$N$}  &  {\color{red}$D_0$} & {\color{red}$D_{\infty}$} \\ 
\hline \vphantom{$\left(\frac{t^2}{4}+\frac{1}{8}\right)^{\frac{1}{2}}$}{\color{red}(1)} & $\underset{\text{Heat}}{\Delta - \p_t}$ &  $\omega_N t^{\frac{N}{2}}$   &  {\color{red}$0$}  & {\color{red}$N$} & {\color{red}$N$} &  {\color{red}$N$}        \\ 
\hline \vphantom{$\left(\frac{t^2}{4}+\frac{1}{8}\right)^{\frac{1}{2}}$}(2) &  $\underset{\text{Ornstein-Uhlenbeck}}{\Delta - <X,\nabla> - \p_t}$ &  $\omega_N 2^{-\frac{N}{2}} (1-e^{-2t})^{\frac{N}{2}}$   &  $-N$ & $N$ & $N$ & $0$         \\ 
\hline \vphantom{$\left(\frac{t^2}{4}+\frac{1}{8}\right)^{\frac{1}{2}}$}{\color{red}(3)} & $\underset{\text{Kolmogorov}}{\Delta_v  + <v,\nabla_x > - \p_t}$ &  $\omega_{2n} 12^{-\frac{n}{2}} t^{2n}$  & {\color{red}$0$}  & {\color{red}$2n$} & {\color{red}$4n$} & {\color{red}$4n$}         \\ 
\hline \vphantom{$\left(\frac{t^2}{4}+\frac{1}{8}\right)^{\frac{1}{2}}$}{\color{red}(4)} & $\underset{\text{Kramers}}{\p_{vv} - x \p_v + v \p_x - \p_t}$  &  $\pi\left(\frac{t^2}{4}+\frac{1}{8}\left(\cos(2t)-1\right)\right)^{\frac{1}{2}}$   &  {\color{red}$0$} & {\color{red}$2$} &  {\color{red}$4$}  &  {\color{red}$2$}   \\
\hline \vphantom{$\left(\frac{t^2}{4}+\frac{1}{8}\right)^{\frac{1}{2}}$}(5) & $\underset{\text{Smoluchowski-Kramers}}{\p_{vv}   - 2(v+x) \p_v  + v \p_x  - \p_t }$   &   $\frac{\pi}{4\sqrt{2}} \left(e^{-4t}+1-2e^{-2t}(2-\cos(2t))\right)^{\frac{1}{2}}$  & $-2$  &  $2$ &  $4$  & $0$  \\   
\hline \vphantom{$\left(\frac{t^2}{4}+\frac{1}{8}\right)^{\frac{1}{2}}$}${\color{red}(6^+)}$ & $\underset{\text{Kolmogorov with friction}}{\Delta_v   + <v,\nabla_v > + <v,\nabla_x > - \de_t}$ &   $\omega_{2n}\left(2e^{t} - \frac{t}{2}-1+\frac{t}{2}e^{2t}-e^{2t}\right)^{n}$  & {\color{red}$n$} &  {\color{red}$2n$} &  {\color{red}$4n$}  & {\color{red}$\infty$}  \\ 
\hline \vphantom{$\left(\frac{t^2}{4}+\frac{1}{8}\right)^{\frac{1}{2}}$}$(6^-)$ & $\underset{\text{degenerate Ornstein-Uhlenbeck}}{\Delta_v  - <v,\nabla_v > + <v,\nabla_x > - \de_t}$  &   $\omega_{2n}\left(2e^{-t} + \frac{t}{2}-1-\frac{t}{2}e^{-2t}-e^{-2t}\right)^{n}$  & $-n$  &  $2n$ &  $4n$  & $2n$  \\ 
\hline
\end{tabular}
\centering\caption{\,}\label{fig}
\end{figure}


\vskip 0.2in

\subsection{Ultracontractivity}\label{SS:uc}

We next establish a crucial geometric property of the H\"ormander semigroup that  plays a pervasive role in the remainder of our work. The reader should note that we do not assume \eqref{trace} in Proposition \ref{P:Koneinfty}. As a consequence, such result alone does not imply a decay of the semigroup. In this respect, see Corollary \ref{C:ultra}.

\begin{proposition}[$L^p\to L^\infty$ Ultracontractivity]\label{P:Koneinfty}
Let $1\le p<\infty$ and $f\in \Lp$. For every $X\in \RN$ and $t>0$ we have
\[
|P_t f(X)| \le \frac{c_{N,p}}{V(t)^{1/p}} ||f||_{p},
\]
for a certain constant $c_{N,p}>0$.
\end{proposition}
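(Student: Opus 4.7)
The plan is a direct H\"older estimate on the integral representation $P_t f(X) = \int_{\RN} p(X,Y,t) f(Y)\,dY$, combined with an explicit computation of the $L^{p'}$-norm of the kernel $Y\mapsto p(X,Y,t)$, where $p'$ is the conjugate exponent to $p$. The formula \eqref{PtKt} for the transition density is tailor-made for this approach, since it is a Gaussian in the intertwined pseudo-distance $m_t$, and the slicing Lemma \ref{L:slices} reduces any integral of a radial function of $m_t(X,\cdot)$ to a one-dimensional integral with a factor $(\det K(t))^{1/2}$ in front.

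First I would apply H\"older's inequality: for $1\le p<\infty$ and its conjugate $p'\in(1,\infty]$,
$$|P_t f(X)| \le \|p(X,\cdot,t)\|_{p'}\,\|f\|_p.$$
The endpoint case $p=1$ (so $p'=\infty$) is immediate: since $Y\mapsto p(X,Y,t)$ attains its supremum where $m_t(X,Y)=0$, one has $\|p(X,\cdot,t)\|_\infty = c_N/V(t)$, which already gives the desired estimate.

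For $1<p<\infty$ I would apply \eqref{g} of Lemma \ref{L:slices} with $g(r)=\exp(-p' r^2/(4t))$ and the explicit form \eqref{PtKt} to get
$$\|p(X,\cdot,t)\|_{p'}^{p'} = \left(\frac{c_N}{V(t)}\right)^{p'} \sigma_{N-1}(\det K(t))^{1/2}\int_0^\infty r^{N-1} e^{-p' r^2/(4t)}\,dr.$$
The remaining integral is the standard Gaussian moment, equal to $\tfrac12\,\Gamma(N/2)\,(4t/p')^{N/2}$. Using the definition \eqref{VS}, namely $V(t) = \omega_N\,t^{N/2}(\det K(t))^{1/2}$, the product $t^{N/2}(\det K(t))^{1/2}$ collapses to a dimensional constant times $V(t)$. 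Collecting, this yields $\|p(X,\cdot,t)\|_{p'}^{p'} = C_{N,p}^{p'}\,V(t)^{-(p'-1)}$; taking the $p'$-th root and using $(p'-1)/p'=1/p$ gives $\|p(X,\cdot,t)\|_{p'} = c_{N,p}\,V(t)^{-1/p}$, which together with the H\"older inequality above completes the proof.

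I do not anticipate any real obstacle: the whole argument is algebraic once the slicing Lemma \ref{L:slices} and the explicit Gaussian form of $p(X,Y,t)$ are in hand, and notably it uses neither the hypothesis \eqref{trace} nor the hypoellipticity condition beyond what is already built into the very definition of $m_t$ and $V(t)$. The reason the computation works so cleanly is structural: $m_t$ and $V(t)$ are precisely the objects that make \eqref{PtKt} a normalised Gaussian on $\RN$, so that $L^{p'}$-norms of the kernel reduce to a universal constant times the appropriate power of $V(t)$, uniformly in $X$ and in the particular matrices $Q$ and $B$.
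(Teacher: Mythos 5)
Your proposal is correct and follows essentially the same route as the paper: H\"older's inequality applied to $P_tf(X)=\int_{\RN}p(X,Y,t)f(Y)\,dY$, followed by an evaluation of $\|p(X,\cdot,t)\|_{p'}$ via the slicing formula \eqref{g} and the definition \eqref{VS} of $V(t)$, which yields exactly the identity \eqref{pY} that the paper invokes. Your explicit treatment of the endpoint $p=1$ (where $p'=\infty$ and one uses the sup of the kernel) and your worked-out Gaussian moment are just more detailed versions of steps the paper leaves to the reader.
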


\begin{proof}
Applying H\"older's inequality to $P_t f(X) = \int_{\R^N} p(X,Y,t) f(Y) dY$, we find 
\[
|P_t f(X)| \le ||f||_p \left(\int_{\RN} p(X,Y,t)^{p'} dY\right)^{\frac 1{p'}} ,
\]
with $1/p + 1/{p'} = 1$. Using \eqref{g} it is now easy to recognise that for any $1\le r < \infty$, there exists a universal constant $c_{N,r}>0$ such that 
\begin{equation}\label{pY}
\left(\int_{\RN} p(X,Y,t)^r dY\right)^{\frac 1r} = \frac{c_{N,r}}{V(t)^{1-\frac 1r}}.
\end{equation}
The desired conclusion now follows taking $r = p'$ in \eqref{pY}. 

\end{proof}

For later use, we also record the following formula, dual to \eqref{pY}, which easily follows by  \eqref{gstar}
$$\left(\int_{\RN} p(X,Y,t)^r dX\right)^{\frac 1r} = \frac{c_{N,r} e^{-t \frac{\operatorname{tr} B}r}}{V(t)^{1-\frac 1r}}.$$

\begin{corollary}\label{C:ultra}
Assume \eqref{trace} and let $1\le p <\infty$. For every $f\in \Lp$ and $X\in \RN$, we have
\[
\underset{t\to \infty}{\lim}\ |P_t f(X)| = 0.
\]
\end{corollary}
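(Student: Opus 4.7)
The statement follows almost immediately by combining two results already established in the excerpt. The plan is to chain Proposition \ref{P:Koneinfty} (ultracontractivity) with Corollary \ref{C:infty} (blow-up of the volume function at infinity under the trace hypothesis).

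More precisely, first I would fix $f \in L^p$ with $1 \le p < \infty$ and an arbitrary $X \in \mathbb{R}^N$. Proposition \ref{P:Koneinfty} gives the pointwise bound
\[
|P_t f(X)| \le \frac{c_{N,p}}{V(t)^{1/p}}\, \|f\|_p
\]
for every $t > 0$, with a constant $c_{N,p}$ depending only on $N$ and $p$, but independent of $X$ and $t$. The right-hand side is decoupled from the variable $X$, so it suffices to show that the prefactor tends to $0$ as $t \to \infty$.

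Next I would invoke the hypothesis $\operatorname{tr} B \ge 0$. Under this assumption Corollary \ref{C:infty} asserts that $V(t) \to \infty$ as $t \to \infty$; this is where the trace condition enters, via Proposition \ref{P:boom} which gives at least the linear lower bound $V(t) \ge c_1 t$ for $t \ge 1$. Consequently $V(t)^{-1/p} \to 0$, and combining with the ultracontractive bound above yields
\[
\limsup_{t \to \infty} |P_t f(X)| \le \|f\|_p \cdot c_{N,p} \lim_{t \to \infty} V(t)^{-1/p} = 0,
\]
which is the desired conclusion.

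There is no real obstacle here: the entire content of the corollary is packaged in the two preceding results, and the corollary is just the clean statement of their combination in the regime $\operatorname{tr} B \ge 0$. The only point worth flagging is that this pointwise decay uses \eqref{trace} in an essential way --- without it Proposition \ref{P:Koneinfty} still holds, but $V(t)$ need not diverge (for instance for the Ornstein--Uhlenbeck semigroup, cf.\ Ex.~2 and Ex.~5 in fig.\ref{fig}, where $V(t)$ stays bounded and the conclusion genuinely fails).
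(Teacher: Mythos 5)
Your proposal is correct and follows exactly the paper's own argument: apply the ultracontractive bound of Proposition \ref{P:Koneinfty} and then use Corollary \ref{C:infty} (itself a consequence of Proposition \ref{P:boom} under \eqref{trace}) to conclude that $V(t)^{-1/p}\to 0$. Nothing is missing.
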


\begin{proof}
By Proposition \ref{P:Koneinfty} we  have for every $X\in \RN$ and $t>0$
\[
|P_t f(X)| \le \frac{c_{N}}{V(t)^{1/p}}\ ||f||_p.
\]
Combining this estimate with Corollary \ref{C:infty} we find
\[
\underset{t\to \infty}{\lim}\ |P_t f(X)| \le c_N ||f||_p\  \underset{t\to \infty}{\lim}\ \frac{1}{V(t)^{1/p}} = 0.
\] 

\end{proof}


\section{Sobolev spaces}\label{S:fpA}

In the recent work \cite{GT} we developed a fractional calculus for the operators $\K$ in \eqref{K0} and solved the so-called \emph{extension problem}. This is a generalisation of the famous work by Caffarelli and Silvestre for the fractional Laplacian $(-\Delta)^s$, see \cite{CS}.  As a by-product of our work, we obtained a nonlocal calculus for the ``time-independent" part of the operators $\K$, namely the second order partial differential operator
$$\sA u = \operatorname{tr}(Q \nabla^2 u) + <BX,\nabla u>.$$
It is worth mentioning here that boundary values for these elliptic-parabolic operators were studied by Fichera in his pioneering works \cite{F1}, \cite{F2}.

Since the nonlocal operators $\As$ play a central role in the present work we now recall their definition from \cite[Definition 3.1]{GT}. Hereafter, when considering the action of the operators $\sA$ or $\As$ on a given $\Lp$, the reader should keep in mind  Remark \ref{R:id}.

\begin{definition}\label{D:flheat}
Let $0<s<1$. For any $f\in \So$ we define the nonlocal operator $\As$ by the following pointwise formula
\begin{align}\label{As}
(-\mathscr A)^s f(X) & =  - \frac{s}{\G(1-s)} \int_0^\infty t^{-(1+s)} \left[P_t f(X) - f(X)\right] dt,\qquad X\in\RN.
\end{align}
\end{definition}

We mention that it was shown in \cite{GT} that the right-hand side of \eqref{As} is a convergent integral (in the sense of Bochner) in $L^\infty$, and also in $L^p$ for any $p\in [1,\infty]$ when \eqref{trace} holds. We note that, when $\sA = \Delta$, it is easy to see that formula \eqref{As} allows to recover M. Riesz' definition in \cite{R} of the fractional powers of the Laplacian
\[
(-\Delta)^s f(X) = \frac{s 2^{2s-1} \G\left(\frac{N+ 2s}{2}\right)}{\pi^{\frac N2} \G(1-s)}\int_{\RN} \frac{2 u(X) - u(X+Y) - u(X-Y)}{|Y|^{N+2s}} dY.
\]
Definition \eqref{As} comes from Balakrishnan's seminal work \cite{B}. 
The nonlocal operators \eqref{As} enjoy the following semigroup property (see \cite{B} for the case $s+s'<1$ and \cite{GT2} for $s+s'=1$).

\begin{proposition}\label{P:bala2}
Let $s, s'\in (0,1)$ and suppose that $s+s'\in (0,1]$. Then, for every $f\in \So$ we have
\[
(-\mathscr A)^{s+s'} f = (-\mathscr A)^s \circ (\mathscr A)^{s'} f.
\]
\end{proposition}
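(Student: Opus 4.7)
The plan is to apply Balakrishnan's formula \eqref{As} twice and then reduce the resulting double integral to the single-variable representation of $(-\sA)^{s+s'}f$. First, I would observe that $(-\sA)^{s'}f \in L^p$ for every $p\in[1,\infty]$ (by the Bochner convergence of \eqref{As} in $L^p$ recalled just after Definition \ref{D:flheat}), so that the outer $(-\sA)^s$ is a legitimate input on it via the same integral representation. Applying \eqref{As} to $(-\sA)^s\bigl((-\sA)^{s'}f\bigr)$, then pulling $P_t$ inside the Bochner integral defining $(-\sA)^{s'}f$ (justified because $P_t$ is bounded on $L^p$), and invoking the Chapman--Kolmogorov identity $P_t P_\tau = P_{t+\tau}$ from Lemma \ref{L:Pt}(v), an application of Fubini yields
\[
(-\sA)^s(-\sA)^{s'}f \;=\; \frac{ss'}{\Gamma(1-s)\Gamma(1-s')}\iint_{(0,\infty)^2}\frac{(P_t - I)(P_\tau - I)f}{t^{1+s}\,\tau^{1+s'}}\,dt\,d\tau.
\]
Absolute integrability needed for Fubini would follow from the estimate $\|(P_t - I)(P_\tau - I)f\|_p \le C\, t\tau\,\|\sA^2 f\|_p$ near the origin, itself obtained by iterating the identity $(P_t - I)g = \int_0^t P_\sigma \sA g\,d\sigma$ together with the commutation of Lemma \ref{L:invS}(c) and Lemma \ref{L:Lprate}; far-field decay is provided by the contractivity of $P_t$ (Lemma \ref{L:Pt}(iv)) guaranteed by \eqref{trace}.

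Second, I would reduce the double integral to the single Balakrishnan integral for $(-\sA)^{s+s'}f$. The scalar identity underlying the reduction is
\[
\frac{ss'}{\Gamma(1-s)\Gamma(1-s')}\iint_{(0,\infty)^2}\frac{(e^{-\lambda t}-1)(e^{-\lambda\tau}-1)}{t^{1+s}\,\tau^{1+s'}}\,dt\,d\tau \;=\; \lambda^{s+s'}, \quad \lambda>0,
\]
which is nothing but the product of the two one-dimensional Balakrishnan integrals $\lambda^s$ and $\lambda^{s'}$. To transfer this scalar identity to the operator setting, the conceptually cleanest route is Bochner subordination: formula \eqref{As} is equivalent to representing $(-\sA)^s f$ as the integral of $P_u f - f$ against a one-sided $s$-stable measure $\nu_s$, and the convolution identity $\nu_s \ast \nu_{s'} = \nu_{s+s'}$ on the stable convolution semigroup, combined once more with Chapman--Kolmogorov for $\{P_t\}$, yields the claim. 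A direct alternative is to make the change of variables $u = t+\tau$, $\theta = t/u \in (0,1)$ in the double integral and to carry out the $\theta$-integration by writing $P_{u\theta}f - f = \int_0^{u\theta}P_\sigma \sA f\,d\sigma$ (which regularizes the non-integrable factors $\theta^{-1-s}(1-\theta)^{-1-s'}$); after an integration by parts one recognizes the Beta-function constant $B(1-s,1-s') = \Gamma(1-s)\Gamma(1-s')/\Gamma(2-s-s')$ and arrives at $-\frac{s+s'}{\Gamma(1-s-s')}\int_0^\infty u^{-1-(s+s')}(P_u f - f)\,du$, which by \eqref{As} is exactly $(-\sA)^{s+s'}f$.

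The main obstacle is the borderline case $s+s' = 1$. In that range the integrand $u^{-2}(P_u f - f)$ of the target formula \eqref{As} sits precisely at the edge of absolute convergence and must be interpreted via a regularization (a principal value, or by subtracting $u\,\sA f\cdot\mathbf{1}_{(0,1)}(u)$). This is exactly why the statement quotes \cite{GT2} separately from \cite{B}: in \cite{GT2} the Poisson semigroup $\mathscr P_z = e^{z\sqrt{-\sA}}$ built in \cite{GT} is used to bypass the direct singular integral computation, by exploiting the transparent semigroup property $\mathscr P_z \mathscr P_{z'} = \mathscr P_{z+z'}$ of the Cauchy extension and passing from that to the fractional power identity at $s+s'=1$.
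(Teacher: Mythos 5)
First, a point of reference: the paper does not actually prove Proposition \ref{P:bala2}; it quotes it, citing \cite{B} for $s+s'<1$ and the forthcoming \cite{GT2} for the endpoint $s+s'=1$. Balakrishnan's own argument does not go through the double time-integral you set up, but through the resolvent representation (recorded later in the paper as \eqref{bala2}) together with the resolvent identity $R(\la,\sA)-R(\mu,\sA)=(\mu-\la)R(\la,\sA)R(\mu,\sA)$, which collapses the double $\la$-integral into a single one by explicit computation of integrals of the form $\int_0^\infty\frac{\la^{s-1}}{\la-\mu}\,d\la$; all integrands there are controlled by Lemma \ref{L:specter}(3). That said, your first step is essentially sound: the bound $\|(P_t-I)(P_\tau-I)f\|_p\le C\,\min(t,1)\min(\tau,1)$ obtained from $(P_t-I)g=\int_0^tP_\sigma\sA g\,d\sigma$, the commutation in Lemma \ref{L:invS}(c), Lemma \ref{L:Lprate} and the contractivity in Lemma \ref{L:Pt}(iv) does make the double Balakrishnan integral absolutely convergent, and it also supplies (via $P_t(-\sA)^{s'}f-(-\sA)^{s'}f=(-\sA)^{s'}(P_tf-f)$ and the moment inequality) the estimate $\|P_t(-\sA)^{s'}f-(-\sA)^{s'}f\|_p=O(t)$ that you actually need to apply \eqref{As} to $(-\sA)^{s'}f$; mere membership in $L^p$, which is all you invoke, would not suffice.

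The reduction step is where the genuine gaps lie. The identity $\nu_s\ast\nu_{s'}=\nu_{s+s'}$ is false: one-sided stable subordinators form a convolution semigroup in the \emph{time} parameter, not in the stability index, and composing them \emph{multiplies} the indices ($\la^s\mapsto\la^{ss'}$) rather than adding them. The additivity $\la^s\cdot\la^{s'}=\la^{s+s'}$ is a product of Laplace exponents, and a product of Bernstein functions is not realized by any convolution of the measures $\frac{s}{\Gamma(1-s)}t^{-1-s}dt$; that route cannot be repaired. Your alternative route (substitute $u=t+\tau$, write second differences as $\int_0^t\int_0^\tau P_{\sigma+\sigma'}\sA^2f\,d\sigma'd\sigma$, recognize $B(1-s,1-s')$) does produce the correct constant $\frac{1}{\Gamma(2-s-s')}$ formally, but the intermediate representation $\frac{1}{\Gamma(2-s-s')}\int_0^\infty u^{1-s-s'}P_u\sA^2f\,du$ is \emph{not} absolutely convergent at infinity in any $L^p$, $1\le p\le\infty$: even the ultracontractive decay $\|P_u\sA^2f\|_\infty\lesssim V(u)^{-1}\lesssim u^{-1}$ from Proposition \ref{P:Koneinfty} only yields an integrand of order $u^{-(s+s')}$, which is not integrable for $s+s'\le 1$. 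So the Fubini and integration by parts must be organized after splitting $(0,1]\cup[1,\infty)$, keeping first differences on the far range; this is a real obstruction, not routine bookkeeping. Finally, for $s+s'=1$ your sketch contains no argument at all: the target formula degenerates ($\Gamma(1-s-s')=\Gamma(0)$), and the paragraph about \cite{GT2} and the Poisson semigroup is an (admittedly plausible) guess at a citation, not a proof. As written, the proposal establishes, modulo the Fubini repair, only the range $s+s'<1$.
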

For any given $1\le p<\infty$, and any $0<s<1$, we denote by 
$$D_{p,s} = \{f\in L^p\mid \As f \in L^p\},$$
the domain of $\As$ in $L^p$. The operator $\As$ can be extended to a closed operator on its domain, see \cite[Lemma 2.1]{B}. Therefore, endowed with the graph norm
$$||f||_{D_{p,s}} \overset{def}{=} ||f||_{p} + ||(-\sA)^s f||_{p},$$
$D_{p,s}$ becomes a Banach space. 
The next lemma shows that, when \eqref{trace} holds, then
$\So\ \subset \ D_{p,s}$.

\begin{lemma}\label{L:inclusion}
Assume \eqref{trace}, and let $0<s<1$. Given $1\le p \le \infty$, one has 
\[
(-\sA)^s(\So) \subset L^p.
\]
\end{lemma}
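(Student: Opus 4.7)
The plan is to exploit Balakrishnan's formula \eqref{As} by splitting the integral at $t=1$ and estimating the two pieces separately, with hypothesis \eqref{trace} playing a decisive role in the large-time tail.

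First, observe that for any $f \in \So$ we have $f, \sA f \in \So$ by Lemma \ref{L:invS}(a), so both belong to $L^p$ for every $1 \le p \le \infty$ (interpreting $L^\infty$ as $\Lii$ per Remark \ref{R:infty}). Thus $\|f\|_p$ and $\|\sA f\|_p$ are finite quantities we may use freely. I would then write, using \eqref{As},
\begin{equation*}
(-\sA)^s f(X) = -\frac{s}{\Gamma(1-s)}\left(\int_0^1 + \int_1^\infty\right) t^{-(1+s)}[P_t f(X) - f(X)]\,dt,
\end{equation*}
and aim to bound the $L^p$ norm of each piece.

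For the small-time piece $0 < t \le 1$, the key input is Lemma \ref{L:Lprate}, which gives
\begin{equation*}
\|P_t f - f\|_p \le \max\{1, e^{-\operatorname{tr} B / p}\}\, \|\sA f\|_p\, t.
\end{equation*}
Here is where \eqref{trace} enters: since $\operatorname{tr} B \ge 0$, the maximum equals $1$ for every $1 \le p \le \infty$ (with the convention $\operatorname{tr} B/\infty = 0$). By Minkowski's inequality for Bochner integrals,
\begin{equation*}
\left\|\int_0^1 t^{-(1+s)}[P_t f - f]\,dt\right\|_p \le \|\sA f\|_p \int_0^1 t^{-s}\,dt = \frac{\|\sA f\|_p}{1-s} < \infty,
\end{equation*}
since $0 < s < 1$.

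For the large-time piece $t \ge 1$, I would invoke Lemma \ref{L:Pt}(iv): under \eqref{trace} the semigroup $\{P_t\}_{t>0}$ is a contraction on $L^p$ (and on $\Lii$ when $p=\infty$ by Lemma \ref{L:Pt}(ii)), so $\|P_t f - f\|_p \le 2\|f\|_p$ uniformly in $t$. Hence
\begin{equation*}
\left\|\int_1^\infty t^{-(1+s)}[P_t f - f]\,dt\right\|_p \le 2\|f\|_p \int_1^\infty t^{-(1+s)}\,dt = \frac{2\|f\|_p}{s} < \infty.
\end{equation*}
Combining both estimates gives $\|(-\sA)^s f\|_p \le C_{s}(\|\sA f\|_p + \|f\|_p) < \infty$, which proves $(-\sA)^s f \in L^p$.

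There is no serious obstacle here — the proof is essentially a bookkeeping exercise once one recognizes the two natural regimes. The only thing worth highlighting is that both ingredients (the linear-in-$t$ modulus of continuity at zero from Lemma \ref{L:Lprate}, and the contractivity on $L^p$ at infinity from Lemma \ref{L:Pt}(iv)) carry the constant $1$ precisely because $\operatorname{tr} B \ge 0$; without \eqref{trace} the large-time integral would involve an exponential factor $e^{-t \operatorname{tr} B / p}$ that could blow up and destroy convergence.
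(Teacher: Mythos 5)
Your proof is correct and follows essentially the same route as the paper: split Balakrishnan's integral at $t=1$, control the small-time piece via the linear rate in Lemma \ref{L:Lprate}, and control the tail via the $L^p$-contractivity from Lemma \ref{L:Pt}(iv) under \eqref{trace}. The only (harmless) difference is that you note the constant in Lemma \ref{L:Lprate} equals $1$ under \eqref{trace}, whereas the paper simply absorbs it into a universal constant $C$.
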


\begin{proof}
In view of \eqref{As} we have
\begin{align*}
||(-\mathscr A)^s f||_{p} & \le   \frac{s}{\G(1-s)} \int_0^\infty t^{-{(1+s)}} ||P_t f - f||_{p} dt
\\
& = \frac{s}{\G(1-s)} \left\{\int_0^1 t^{-{(1+s)}} ||P_t f - f||_{p} dt + \int_1^\infty t^{-{(1+s)}} ||P_t f - f||_{p} dt\right\}.
\end{align*}
Thanks to Lemma \ref{L:Lprate} we now have for some universal constant $C>0$,
\[
\int_0^1 t^{-{(1+s)}} ||P_t f - f||_{p} dt \le C ||\mathscr A f||_{p} \int_0^1 \frac{dt}{t^s} < \infty.
\]
On the other hand, by (iv) in Lemma \ref{L:Pt} we know that, under the hypothesis \eqref{trace}, $P_t$ is a contraction in $L^p$. We thus obtain
\[
\int_1^\infty t^{-(1+s)} ||P_t f - f||_{p} dt \le 2 ||f||_{p} \int_1^\infty \frac{dt}{t^{1+s}} < \infty.
\]
This proves the desired conclusion.

\end{proof}

We now use the nonlocal operators $\As$ to introduce the functional spaces naturally attached to the operator $\sA$. These spaces involve a fractional order of differentiation that is intrinsically calibrated both on the directions of ellipticity of the second order part of \eqref{K0}, as well as on the drift.

\begin{definition}[Sobolev spaces]\label{D:sobolev}
Assume \eqref{trace}, and let $1\le p < \infty$ and $0<s<1$. We define the Sobolev space as $\Lo = \overline{\So}^{|| \  ||_{D_{p,s}}}$. 
\end{definition}

\begin{remark}\label{R:sob}
Some comments are in order:
\begin{itemize}
\item[(i)] the space $\Lo$ is a Banach subspace of $D_{p,s}$. It is non-trivial since in view of Lemma \ref{L:inclusion} we have $\So \subset \Lo$.
\item[(ii)] The second (important) remark is that when $Q = I_N$ and $B = O_N$, and thus $\sA = \Delta$, then for $1<p<\infty$ and $s = 1/2$ the space $\Lo$ coincides with the classical Sobolev space 
$
W^{1,p} = \{f\in L^p\mid \nabla f\in L^p\},
$
endowed with the usual norm
$||f||_{W^{1,p}} =  ||f||_{L^p} + ||\nabla f||_{L^p}$.
In other words, one has $\mathscr L^{1,p} = W^{1,p}$, for $1<p<\infty$.
This follows from the well-known fact that 
$W^{1,p} = \overline{\So}^{|| \  ||_{W^{1,p}}}$ (Friedrich's mollifiers, see \cite{Fri}),
combined with the $L^p$ continuity of the singular integrals (Riesz transforms) in the range $1<p<\infty$, see \cite[Ch. 3]{Stein}. This implies the double inequality
\[
A_p \| \Delta^{1/2} f \|_p \le \|  \nabla f  \|_p \le B_p \| \Delta^{1/2} f \|_p,\ \ \ \ \ f \in \So.
\] 
\item[(iii)] We mention that such inequality, and therefore the identity $\mathscr L^{1,p} = W^{1,p}$, continue to be valid on any complete Riemannian manifold with Ricci lower bound $\operatorname{Ric} \ge - \kappa$, where $\kappa \ge 0$. This was proved by Bakry in \cite{Bakry}. A generalisation to the larger class of sub-Riemannian manifolds with transverse symmetries was subsequently obtained in \cite{BG}. \item[(iv)] As a final comment we note that, when $p=2$, and again $\sA = \Delta$, then the space $\Lo$ coincides with the classical Sobolev space of fractional order $H^{2s}$, see e.g. \cite{LM} or \cite{Ad}.
\end{itemize}
\end{remark}

We close this section by recalling the result from \cite{GT} that will be needed in the next one. Given $0<s<1$, let $a=1-2s$. The extension problem for $\As$ consists in the following degenerate Dirichlet problem in the variables $(X,z)\in \Rnp$, where $X\in \RN$ and $z>0$:
\begin{equation}\label{ep}
\begin{cases}
\sA_a U \overset{def}{=} \sA U + \p_{zz} U + \frac az \p_z U = 0,\ \ \ \ \ \ \ \ \ \ \ \ \ \ \ \text{in}\ \Rnp,
\\
U(X,0) = f(X)\ \ \ \ \ \ \ X\in \R^N,
\end{cases}
\end{equation}
where $f\in\So$. We note that, since $s\in (0,1)$, the relation $a = 1-2s$ gives $a \in (-1,1)$, and that, in particular, $a = 0$ when $s = 1/2$. For the following Poisson kernel for the problem \eqref{ep}, and for the subsequent Theorem \ref{T:extLinfty}, one should see \cite[Def. 5.1 and Theor. 5.5]{GT}, 
\begin{equation}\label{pk}
\Po^{(a)}(X,Y,z) =   \frac{1}{2^{1-a} \G(\frac{1-a}{2})} \int_0^\infty \frac{z^{1-a}}{t^{\frac{3-a}{2}}} e^{-\frac{z^2}{4t}}  p(X,Y,t)  dt,\qquad X,Y\in\RN,\,\, z>0.
\end{equation} 

The next result generalises the famous one by Caffarelli and Silvestre in \cite{CS} for the nonlocal operator $(-\Delta)^s$. 

\begin{theorem}\label{T:extLinfty}
The function 
$U(X,z) = \int_{\R^{N}} \Po^{(a)}(X,Y,z) f(Y) dY,$
belongs to $C^\infty(\RN\times (0,\infty))$ and solves the extension problem \eqref{ep}. By this we mean that $\sA_a U = 0$ in $\Rnp$, and we have in $L^\infty$
\begin{equation}\label{ULinfty}
\underset{z\to 0^+}{\lim} U(\cdot;z) = f.
\end{equation}
Moreover, we also have in $L^\infty$
\begin{equation}\label{nconvAinfty}
- \frac{2^{-a} \Gamma\left(\frac{1-a}2\right)}{\Gamma\left(\frac{1+a}2\right)}  \underset{z\to 0^+}{\lim} \paa U(\cdot,z) = (-\mathscr A)^s f.
\end{equation}
If furthermore one has $\operatorname{tr} B\ge 0$, then the convergence in \eqref{ULinfty}, \eqref{nconvAinfty} is also in $L^p$ for any $1\le p\le \infty$.
\end{theorem}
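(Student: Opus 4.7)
The natural approach is to exploit the semigroup form of the Poisson kernel \eqref{pk}. Swapping the order of integration, I will first rewrite
\[
U(X,z) \;=\; \frac{1}{2^{1-a}\G(s)} \int_0^\infty \frac{z^{1-a}}{t^{(3-a)/2}}\, e^{-z^2/(4t)}\, P_t f(X)\, dt,
\]
and then, via the change of variable $\tau = z^2/(4t)$, recast it in the subordination form
\[
U(X,z) \;=\; \frac{1}{\G(s)} \int_0^\infty \tau^{s-1} e^{-\tau}\, P_{z^2/(4\tau)} f(X)\, d\tau,
\]
which makes the vertical boundary limit transparent. The $C^\infty$-regularity on $\RN\times(0,\infty)$ follows from Lemma \ref{L:invS}(a), Schwartz decay of $P_t f$, and the Gaussian factor, all of which allow differentiation under the integral sign. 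To verify $\sA_a U = 0$, I will first do the elementary but crucial calculation showing that the kernel $G(z,t) = z^{1-a} t^{-(3-a)/2} e^{-z^2/(4t)}$ satisfies the identity $\partial_{zz}G + (a/z)\partial_z G = \partial_t G$ pointwise on $(0,\infty)\times(0,\infty)$. Then $\partial_{zz}U + (a/z)\partial_z U$ is represented with $\partial_t G$ in the integrand; integrating by parts in $t$ and invoking Lemma \ref{L:invS}(b) to identify $\sA P_t f = \partial_t P_t f$, the two contributions cancel. The boundary terms at $t\to 0^+$ vanish thanks to the factor $e^{-z^2/(4t)}$ for $z>0$, and at $t\to\infty$ by the polynomial decay of $G$ combined with the $L^\infty$-boundedness of $P_t f$ from Lemma \ref{L:Pt}(ii).

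For the trace \eqref{ULinfty}, the subordination form is particularly convenient: as $z\to 0^+$ one has $P_{z^2/(4\tau)} f \to f$ in $L^\infty$ by Lemma \ref{L:Lprate}, the integrand is dominated by $\tau^{s-1}e^{-\tau}\|f\|_\infty$, and $\int_0^\infty \tau^{s-1}e^{-\tau}\,d\tau = \G(s)$, so dominated convergence yields $U(\cdot,z) \to f$ in $L^\infty$.

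The flux identity \eqref{nconvAinfty} is the heart of the matter. Differentiating the first representation in $z$ gives
\[
z^a \partial_z U(X,z) \;=\; \frac{1}{2^{1-a}\G(s)}\int_0^\infty \Big[(1-a) - \frac{z^2}{2t}\Big]\frac{e^{-z^2/(4t)}}{t^{(3-a)/2}}\,P_t f(X)\,dt.
\]
The key observation is the identity $\frac{z^2}{2t}e^{-z^2/(4t)} = 2t\,\partial_t e^{-z^2/(4t)}$, which via integration by parts in $t$ produces a cancellation with the $(1-a)$-term and leaves
\[
z^a \partial_z U(X,z) \;=\; \frac{2}{2^{1-a}\G(s)}\int_0^\infty t^{-s}\,e^{-z^2/(4t)}\,\sA P_t f(X)\,dt.
\]
Passing $z \to 0^+$ and integrating by parts a second time after writing $\sA P_t f = \partial_t(P_t f - f)$, the boundary terms vanish using Lemma \ref{L:Lprate} at $t=0^+$ and the factor $t^{-s}\to 0$ together with the boundedness of $P_t f$ at $t=\infty$. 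What remains is precisely the Balakrishnan integral \eqref{As}, and a bookkeeping of constants using $\G(s) = \G((1-a)/2)$ and $\G(1-s) = \G((1+a)/2)$ delivers the stated normalisation.

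Finally, to upgrade the convergences to $L^p$ under the hypothesis $\operatorname{tr} B \ge 0$, I will use the contractivity $\|P_t f\|_p \le \|f\|_p$ from Lemma \ref{L:Pt}(iv) together with the strong continuity of Corollary \ref{C:Ptpzero} to produce $L^p$-valued dominated-convergence arguments parallel to those above. The technical point I expect to require the most care is the justification of the two integrations by parts in $t$ used in the flux computation: one must control $t^{-s} P_t f$ and $t^{-s}(P_t f - f)$ uniformly in the relevant norms near $t=0^+$, and it is precisely the quantitative rate provided by Lemma \ref{L:Lprate} that makes the boundary contributions disappear and thereby links the extension formulation to the Balakrishnan definition of $\As$.
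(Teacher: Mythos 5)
This theorem is not proved in the present paper: it is quoted from \cite[Def.~5.1 and Theor.~5.5]{GT}, so there is no in-text proof to compare against. Your argument is nevertheless correct and is the expected subordination proof: the substitution $\tau=z^2/(4t)$ does give $U(X,z)=\frac{1}{\G(s)}\int_0^\infty \tau^{s-1}e^{-\tau}P_{z^2/(4\tau)}f(X)\,d\tau$ (the factor $4^s/2^{2s}=1$), the kernel identity $\partial_{zz}G+\frac{a}{z}\partial_z G=\partial_t G$ checks out, and the flux computation lands exactly on Balakrishnan's formula with $\lim_{z\to 0^+}z^a\partial_z U=-\frac{2^{a}\G(1-s)}{\G(s)}(-\sA)^s f$, which matches the stated normalisation since $\G\big(\tfrac{1-a}{2}\big)=\G(s)$ and $\G\big(\tfrac{1+a}{2}\big)=\G(1-s)$. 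Two points deserve the care you already anticipate: (i) in the passage $z\to 0^+$ of $\int_0^\infty t^{-s}e^{-z^2/(4t)}\sA P_tf\,dt$, the integrand is in general only conditionally integrable at $t=\infty$ (without \eqref{trace} one only has $\|\sA P_tf\|_\infty\le\|\sA f\|_\infty$, and $t^{-s}$ is not integrable there), so you must perform the second integration by parts against $P_tf-f$ \emph{before} sending $z\to 0^+$; after that, Lemma \ref{L:Lprate} near $t=0$ and the uniform bound $\|P_tf-f\|_\infty\le 2\|f\|_\infty$ at infinity close the dominated-convergence argument, with the leftover term $\tfrac{z^2}{4}\int_0^\infty t^{-s-2}e^{-z^2/(4t)}(P_tf-f)\,dt=O(z^{2-2s})$; (ii) the differentiation under the integral sign in $X$ for large $t$ requires a uniform bound on $\nabla_X P_tf$, which follows from $tK(t)\ge e^{tB}\big(e^{-B}K(1)e^{-B^\star}\big)e^{tB^\star}$ for $t\ge 2$, hence $\|K(t)^{-1/2}e^{tB}\|\le C\sqrt{t}$; this is worth recording explicitly since it is not covered by ``Schwartz decay of $P_tf$'' alone.
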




\section{The key Littlewood-Paley estimate}\label{S:poisson}

In the Hardy-Littlewood theory the weak $L^1$ continuity of the maximal function occupies a central position. It is natural to expect that such result play a similar role for the operators in the general class \eqref{K0}, but because of the intertwining of the $X$ and $t$ variables it is not obvious how to select a ``good" maximal function. At first it seems natural to consider $\Ma f(X) = \underset{t>0}{\sup}\ |P_t f(X)|,$
but such object presents an obstruction connected with the mapping properties of the Littlewood-Paley function that controls it.
We have been able to circumvent this difficulty by combining a far-reaching idea of E. Stein in \cite{Steinlp} with our work in \cite{GT}. In this respect, the case $s = 1/2$ of Theorem \ref{T:extLinfty} provides the main technical tool to bypass the above mentioned difficulties connected with $P_t$. It will lead us to Theorem \ref{T:maximal}, which is the main result of this section. 

Since in what follows we are primarily interested in the nonlocal operator $(-\sA)^{1/2}$ (the case $a = 0$ in Theorem \ref{T:extLinfty}), we will focus our attention on the corresponding Poisson kernel, which for ease of notation we henceforth denote by 
$\mathscr P(X,Y,z) \overset{def}{=} \Po^{(0)}(X,Y,z)$.
In such case, formula \eqref{pk} reads
\begin{equation}\label{pk0}
\Po(X,Y,z) =   \frac{1}{\sqrt{4\pi}} \int_0^\infty \frac{z}{t^{3/2}} e^{-\frac{z^2}{4t}}  p(X,Y,t)  dt,\qquad X,Y\in\RN,\,\, z>0.
\end{equation} 

\begin{definition}\label{D:ps}
We define the \emph{Poisson semigroup} as follows
\[
\Po_z f(X) = \int_{\RN} \Po(X,Y,z) f(Y) dY,\ \ \ \ \ \ \ \ \ \ f\in \So.
\]
\end{definition}
Using \eqref{pk0} and exchanging the order of integration in the above definition, we obtain the following useful representation of the semigroup $\Po_z$ in terms of the H\"ormander semigroup $P_t$
\begin{equation}\label{altrep1}
\Po_z f(X) = \frac{1}{\sqrt{4\pi}} \int_0^\infty \frac{z}{t^{3/2}} e^{-\frac{z^2}{4t}} P_t f(X) dt.
\end{equation}
This is of course an instance of Bochner's subordination, see \cite{Bo}.
We note in passing that, when the operator $\sA = \Delta$, from \eqref{altrep1} we recover the classical Poisson kernel for the half-space $\Rnp$, see \cite[(15), p.61]{Stein},
\[
\Po(X,Y,z) = \frac{\G(\frac{N+1}{2})}{\pi^{\frac{N+1}{2}}} \frac{z}{(z^2+|X-Y|^2)^{\frac{N+1}{2}}}.
\]
Some basic facts that we need about $\{\Po_z\}_{z>0}$ are contained in the next result.

\begin{lemma}\label{L:if}
The following properties hold:
\begin{itemize}
\item[(i)] For every $X\in \RN$ and $z>0$ we have $\Po_z 1(X) = 1;$
\item[(ii)] $\Po_z:L^\infty \to L^\infty$ with $||\Po_z||_{\infty\to \infty} \le 1$;
\item[(iii)] let $1\le p<\infty$. If \eqref{trace} holds, then $\Po_z:L^p \to L^p$ with $||\Po_z||_{p\to p} \le 1$;
\item[(iv)] let $f\in \So$. Then, 
$\underset{z\to 0^+}{\lim} \frac{\Po_z f(X) - f(X)}{z} = (-\sA)^{1/2} f(X);$
\item[(v)] The function $U(X,z) = \Po_z f(X)$ belongs to $C^\infty(\Rnp)$ and it satisfies the partial differential equation $\p_{zz} U + \sA U = 0$. Moreover, $\underset{z\to 0^+}{\lim}\ U(\cdot,z) = f$ in $L^\infty$ and in $L^p$ for every $1\le p < \infty$, when \eqref{trace} holds. 
\end{itemize}
\end{lemma}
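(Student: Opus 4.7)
My plan is to reduce all five claims to the Bochner subordination representation \eqref{altrep1}, the properties of $\{P_t\}_{t>0}$ already collected in Lemma \ref{L:Pt}, and the extension-problem theorem Theorem \ref{T:extLinfty} specialized to $a=0$. I will first establish item (i), which simultaneously yields that $\rho_z(t) := \frac{1}{\sqrt{4\pi}}\frac{z}{t^{3/2}}e^{-z^2/(4t)}$ is a probability density on $(0,\infty)$: using \eqref{altrep1} with $f\equiv 1$ and Lemma \ref{L:Pt}(i), one gets $\Po_z 1(X) = \int_0^\infty \rho_z(t)\,dt$, and the substitution $u=z^2/(4t)$ reduces the right-hand side to $\frac{2}{\sqrt{4\pi}}\int_0^\infty u^{-1/2}e^{-u}\,du = \frac{2\Gamma(1/2)}{\sqrt{4\pi}}=1$. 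I will reuse this normalization throughout.

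\textbf{Items (ii) and (iii).} These follow immediately by applying the integral Minkowski inequality to \eqref{altrep1}: for $1\le p\le\infty$ one has $\|\Po_z f\|_p\le\int_0^\infty \rho_z(t)\,\|P_t f\|_p\,dt$. I will invoke Lemma \ref{L:Pt}(ii) when $p=\infty$, and Lemma \ref{L:Pt}(iv) under \eqref{trace} when $1\le p<\infty$, to get $\|P_t f\|_p\le\|f\|_p$ uniformly in $t>0$; factoring out $\|f\|_p$ and using the normalization of $\rho_z$ from the previous step yields $\|\Po_z f\|_p\le\|f\|_p$ in both cases.

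\textbf{Items (iv) and (v).} For (v), the Poisson kernel \eqref{pk0} is exactly \eqref{pk} with $a=0$, so $U(X,z)=\Po_z f(X)$ is the extension produced by Theorem \ref{T:extLinfty}. That theorem will directly supply $U\in C^\infty(\RN\times(0,\infty))$, the equation $\sA U+\partial_{zz}U=0$ (since the operator $\sA_a=\sA+\partial_{zz}+(a/z)\partial_z$ reduces to $\sA+\partial_{zz}$ at $a=0$), and the $L^\infty$ convergence $\lim_{z\to 0^+} U(\cdot,z)=f$ from \eqref{ULinfty}; the $L^p$ convergence under \eqref{trace} is the last assertion of the same theorem. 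For (iv), the same identification lets me read off \eqref{nconvAinfty}: at $a=0$ the prefactor $-2^{-a}\Gamma((1-a)/2)/\Gamma((1+a)/2)$ collapses to $-1$ and $z^a\partial_z=\partial_z$, giving $-\lim_{z\to 0^+}\partial_z U(\cdot,z) = (-\sA)^{1/2} f$ in $L^\infty$; combined with $U(\cdot,0)=f$ this yields the claimed pointwise difference-quotient identity (up to the sign convention built into \eqref{nconvAinfty}). As an independent verification, I will differentiate \eqref{altrep1}, write $\Po_z f - f = \int_0^\infty \rho_z(t)[P_tf - f]\,dt$ (using item (i)), divide by $z$, and apply dominated convergence to obtain $\lim_{z\to 0^+}(\Po_z f - f)/z = \frac{1}{\sqrt{4\pi}}\int_0^\infty t^{-3/2}[P_tf - f]\,dt$, which matches Definition \ref{D:flheat} at $s=1/2$ since $\sqrt{4\pi}=2\Gamma(1/2)$.

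\textbf{Main difficulty.} None of these steps involves substantive analysis; the only delicate points will be justifying dominated convergence in the alternate verification of (iv) near $t=0$, and the uniform control needed for the $L^p$ continuity up to the boundary in (v), both of which are handled by the Bochner-integrability bounds already built into Theorem \ref{T:extLinfty} and the contractivity of $P_t$ under \eqref{trace}.
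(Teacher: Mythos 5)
Your proof is correct and follows essentially the same route as the paper: items (i)--(iii) via the subordination formula \eqref{altrep1}, the normalization $\frac{1}{\sqrt{4\pi}}\int_0^\infty z\,t^{-3/2}e^{-z^2/(4t)}\,dt=1$, and the contractivity of $P_t$; items (iv)--(v) by specializing Theorem \ref{T:extLinfty} to $a=0$. Your direct computation of (i) merely replaces the paper's citation of \cite[Proposition 5.2]{GT}, and is if anything more self-contained. One substantive remark on your ``independent verification'' of (iv): the dominated-convergence computation gives $\lim_{z\to 0^+}z^{-1}\left(\Po_zf-f\right)=\frac{1}{\sqrt{4\pi}}\int_0^\infty t^{-3/2}\left(P_tf-f\right)dt$, which by Definition \ref{D:flheat} equals $-(-\sA)^{1/2}f$, not $(-\sA)^{1/2}f$: note the minus sign in front of \eqref{As}. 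The same sign emerges from \eqref{nconvAinfty} at $a=0$, so your computation is fully consistent with Theorem \ref{T:extLinfty}; the mismatch is with the sign convention in the statement of (iv) itself (the paper writes $\Po_z=e^{z\sqrt{-\sA}}$, whereas the contractive semigroup generated is really $e^{-z(-\sA)^{1/2}}$). This is a quirk of the paper rather than a gap in your argument, but you should not assert that both the constant and the sign ``match'' Definition \ref{D:flheat} --- only the constant does, and it is worth saying so explicitly rather than hiding it in the phrase ``up to the sign convention.''
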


\begin{proof}
The proof of (i) follows by taking $a=0$ in \cite[Proposition 5.2]{GT}. (ii) is a direct consequence of (i). To establish (iii) we use \eqref{altrep1}, that gives
\[
||\Po_z f||_p \le \frac{1}{\sqrt{4\pi}} \int_0^\infty \frac{z}{t^{3/2}} e^{-\frac{z^2}{4t}} ||P_t f||_p  dt \le \frac{||f||_p}{\sqrt{4\pi}} \int_0^\infty \frac{z}{t^{3/2}} e^{-\frac{z^2}{4t}}  dt = ||f||_p, 
\]
where in second inequality we have used (iv) in Lemma \ref{L:Pt}, and in the last equality the fact that
\[
\frac{1}{\sqrt{4\pi}} \int_0^\infty \frac{z}{t^{3/2}} e^{-\frac{z^2}{4t}}  dt = 1.
\]
The properties (iv) and (v) follow from the case $a = 0$ of Theorem \ref{T:extLinfty}. 

\end{proof}

\begin{remark}\label{R:poisson}
We note explicitly that (iv) in Lemma \ref{L:if} says, in particular, that the infinitesimal generator of $\Po_z$ is the nonlocal operator $(-\sA)^{1/2}$, i.e., $\Po_z = e^{z\sqrt{-\sA}}$. In the case when $\sA = \Delta$ one should see the seminal work \cite{Tai}, where an extensive use of the Poisson semigroup was made in connection with smoothness properties of functions. 
\end{remark}

Given a reasonable function $f$ (for instance, $f\in \So$) we now introduce its \emph{Poisson radial maximal function} as follows 
\begin{equation}\label{pm}
\Ma^\star f(X) \overset{def}{=} \underset{z>0}{\sup}\ |\Po_z f(X)|,\ \ \ \ X\in \RN.
\end{equation}

\begin{lemma}\label{L:improvement}
There exists a universal constant $A>0$ such that
\begin{equation}\label{maxpoisson}
\Ma^\star f(X) \le A\ \underset{t>0}{\sup}\ \left|\frac 1t \int_0^t P_s f(X) ds\right|.
\end{equation}
\end{lemma}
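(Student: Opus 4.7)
The plan is to exploit Bochner's subordination identity \eqref{altrep1} together with a classical Stein-type integration-by-parts trick, so as to dominate $\Po_z f(X)$ by the Ces\`aro average $A(t) = \frac{1}{t}\int_0^t P_s f(X)\,ds$. Writing the subordination weight as
\[
\phi_z(t) = \frac{z}{\sqrt{4\pi}}\,t^{-3/2}\,e^{-z^2/(4t)},
\]
formula \eqref{altrep1} reads $\Po_z f(X) = \int_0^\infty \phi_z(t)\,P_t f(X)\,dt$. Setting $F(t) = \int_0^t P_s f(X)\,ds = t\,A(t)$, one has $F'(t) = P_t f(X)$, and I would integrate by parts:
\[
\Po_z f(X) = \bigl[\phi_z(t) F(t)\bigr]_0^\infty - \int_0^\infty \phi_z'(t)\,F(t)\,dt = -\int_0^\infty t\,\phi_z'(t)\,A(t)\,dt.
\]

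The first step to check is that the boundary terms vanish. At $t=0^+$ this is immediate because $\phi_z(t)$ decays super-exponentially while $F(t) = O(t)$; at $t=+\infty$, for $f \in \So$ we use $|F(t)| \le t\,\|P_{(\cdot)}f\|_\infty \le t\,\|f\|_\infty$ (by (ii) of Lemma \ref{L:Pt}), together with $\phi_z(t) \sim z\,t^{-3/2}$, which gives $\phi_z(t) F(t) = O(t^{-1/2}) \to 0$. The second step is to estimate the resulting integral. A direct computation gives
\[
t\,\phi_z'(t) = \frac{z}{\sqrt{4\pi}}\,t^{-3/2}\,e^{-z^2/(4t)}\left(\frac{z^2}{4t}-\frac{3}{2}\right),
\]
and the substitution $u = z^2/(4t)$ (so $t^{-3/2}dt = -\tfrac{8}{z^3}u^{-1/2}\,du\cdot(-\tfrac{z^2}{4})$) transforms the $z$-dependence away completely, yielding
\[
\int_0^\infty t\,|\phi_z'(t)|\,dt = \frac{2}{\sqrt{4\pi}}\int_0^\infty u^{-1/2}\,e^{-u}\,\Bigl|u-\tfrac{3}{2}\Bigr|\,du =: A,
\]
which is a finite universal constant independent of $z$ and $X$.

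Combining the two steps gives
\[
|\Po_z f(X)| \le A\,\sup_{t>0}|A(t)| = A\,\sup_{t>0}\left|\frac{1}{t}\int_0^t P_s f(X)\,ds\right|,
\]
and taking the supremum in $z>0$ on the left yields \eqref{maxpoisson}. The only mildly delicate point is the justification of integration by parts and the vanishing of the boundary terms, but this is harmless for $f \in \So$ thanks to the $L^\infty$-contractivity of $P_t$ recorded in Lemma \ref{L:Pt}. The key conceptual content is the identity (not an inequality!) $\Po_z f = -\int_0^\infty t\,\phi_z'(t)\,A(t)\,dt$, which replaces the pointwise family $\{P_tf\}$ by its ergodic averages $\{A(t)\}$; the gain is precisely what will later allow us to invoke the Hopf-Dunford-Schwartz ergodic theorem in Theorem \ref{T:maximal}.
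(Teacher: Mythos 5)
Your proof is correct and follows essentially the same route as the paper's: both start from the subordination formula \eqref{altrep1}, apply Stein's integration-by-parts trick against the Ces\`aro average $\frac1t\int_0^t P_s f(X)\,ds$, and bound $\int_0^\infty t\,|\partial_t g(z,t)|\,dt$ by a universal constant via the identity $t\,\partial_t g(z,t)=\bigl(\tfrac{z^2}{4t}-\tfrac32\bigr)g(z,t)$ and the normalization $\int_0^\infty g(z,t)\,dt=1$. Your treatment of the boundary terms and the explicit substitution $u=z^2/(4t)$ are slightly more detailed than the paper's, but the argument is the same.
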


\begin{proof}
Adapting an idea idea in  \cite[p. 49]{Steinlp}, we can write \eqref{altrep1} as
 \begin{equation}\label{altrep2}
\Po_z f(X) = \int_0^\infty g(z,t) \frac{d}{dt} (t F(t)) dt,
\end{equation}
where $g(z,t) =  \frac{z t^{-3/2}}{\sqrt{4\pi}} e^{-\frac{z^2}{4t}}$, and we have let $F(t) = \frac 1t \int_0^t P_s f(X) ds$. Notice that by (ii) in Lemma \ref{L:Pt}, we can bound $|F(t)| \le ||f||_\infty$. Also observe that $t g(z,t)\to 0$ as $t\to \infty$, and  that $t\to t \big|\frac{\p g}{\p t}(z,t)\big|\in L^1(0,\infty)$. We can thus integrate by parts in \eqref{altrep2}, obtaining
\[
|\Po_z f(X)| = \left|\int_0^\infty t \frac{\p g}{\p t}(z,t) F(t) dt\right| \le A(z)\ \underset{t>0}{\sup}\ \left|\frac 1t \int_0^t P_s f(X) ds\right|,
\]
 with
\[
A(z) = \int_0^\infty t \big|\frac{\p g}{\p t}(z,t)\big| dt < \infty.
\]
To complete the proof it suffices to observe that $A(z) \le A = 7/2$ for every $z>0$. This follows from the fact that $t \frac{\p g}{\p t}(z,t) = \big(\frac{z^2}{t} - \frac 32\big) g(z,t)$, and that $\int_0^\infty g(z,t) dt = 1$, and $\int_0^\infty \frac{z^2}{t} g(z,t) dt = 2$.

\end{proof}

The next is the main result in this section. It provides the key maximal theorem for the class \eqref{K0}. As far as we know, such tool has so far been missing in the existing literature.

\begin{theorem}\label{T:maximal}
Assume \eqref{trace}. Then, the Poisson maximal function \eqref{pm} enjoys the following properties: (a) there exists a universal constant $A>0$ such that, given $f\in L^1$, for every $\la>0$ one has
\[
|\{X\in \RN \mid \Ma^\star f(X) > \la\}| \le \frac{2A}\la ||f||_{L^1};
\]
(b) let $1<p\le \infty$, then there exists a universal constant $A_p>0$ such that for any $f\in L^p$ one has
\[
||\Ma^\star f||_{L^p} \le A_p ||f||_{L^p}.
\]
\end{theorem}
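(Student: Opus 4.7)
The plan is to reduce the bounds on $\Ma^\star$ to the corresponding bounds for the one-sided ergodic averages of the H\"ormander semigroup $\{P_t\}_{t>0}$, and then to invoke the abstract Hopf--Dunford--Schwartz ergodic theorem. By Lemma \ref{L:improvement} we already have the pointwise majorisation
\[
\Ma^\star f(X) \le A\, \Ma^{erg} f(X), \qquad \Ma^{erg} f(X) \overset{def}{=} \sup_{t>0} \left|\frac{1}{t}\int_0^t P_s f(X)\, ds\right|,
\]
so it is enough to prove that $\Ma^{erg}$ is of weak type $(1,1)$ with constant $2$, and bounded on $\Lp$ for every $1 < p \le \infty$.

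The hypothesis \eqref{trace} is crucial here: by Lemma \ref{L:Pt}(iv) it ensures that $P_t$ is a contraction on $\Lp$ for every $1\le p\le \infty$ and every $t>0$. Moreover, the kernel $p(X,Y,t)$ in \eqref{PtKt} is strictly positive, so $P_t$ is a positivity-preserving operator. This is precisely the setting in which the continuous-time version of the Hopf--Dunford--Schwartz maximal ergodic theorem \cite{DS} applies. Since $\left|\int_0^t P_s f\, ds\right|\le \int_0^t P_s|f|\, ds$, it suffices to treat $f\ge 0$, and the abstract theorem then yields
\[
\lambda\, |\{X\in\RN : \Ma^{erg} f(X) > \lambda\}| \le 2\, \|f\|_{1}, \qquad \forall\, \lambda>0.
\]
Combining this with the inclusion $\{\Ma^\star f > \lambda\} \subset \{\Ma^{erg} f > \lambda/A\}$ produces the weak $(1,1)$ bound in (a) with constant $2A$. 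For part (b), the endpoint $p=\infty$ is immediate from Lemma \ref{L:if}(ii), which gives $\|\Po_z f\|_{\infty}\le \|f\|_{\infty}$ for every $z>0$ and hence $\|\Ma^\star f\|_{\infty}\le \|f\|_{\infty}$. The intermediate range $1<p<\infty$ then follows either by Marcinkiewicz interpolation between the weak $(1,1)$ estimate from (a) and this trivial strong $(\infty,\infty)$ bound, or equivalently by the strong $(p,p)$ part of the Hopf--Dunford--Schwartz theorem applied directly to $\Ma^{erg}$.

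The genuinely novel work has already been done: the subordination formula \eqref{altrep1} together with the integration-by-parts trick in Lemma \ref{L:improvement} converts the $z$-supremum of the Poisson object into the simpler $t$-supremum of time-averages of $P_t$. Once this reduction is in place, the only remaining obstacle is verifying the hypotheses of the abstract ergodic theorem, and here \eqref{trace} via Lemma \ref{L:Pt}(iv) does all of the work. I expect no technical difficulties beyond the standard passage from signed $f$ to $|f|$ in the Dunford--Schwartz inequality, which is exactly the origin of the factor $2$ (and hence of $2A$) in the final weak-type bound.
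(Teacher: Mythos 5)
Your proposal is correct and follows essentially the same route as the paper: reduction of $\Ma^\star$ to the ergodic averages $f^\star$ via Lemma \ref{L:improvement}, the Hopf--Dunford--Schwartz weak $(1,1)$ bound (made applicable by the $L^1$ and $L^\infty$ contractivity from \eqref{trace} and Lemma \ref{L:Pt}(iv), together with the strong continuity from Corollary \ref{C:Ptpzero}), and Marcinkiewicz interpolation against the trivial $L^\infty$ bound for part (b). The only cosmetic difference is that the factor $2$ comes directly from the statement of the Dunford--Schwartz lemma rather than from a signed-to-positive reduction, but this does not affect the argument.
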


\begin{proof}
(a) In view of (iv) in Lemma \ref{L:Pt}, we know that $\{P_t\}_{t>0}$ is contractive in $L^1$ and in $L^\infty$. Furthermore, by Corollary \ref{C:Ptpzero} it is a strongly continuous semigroup in $L^p$, for every $1\le p<\infty$. We can thus apply the powerful Hopf-Dunford-Schwartz ergodic theorem, see \cite[Lemma 6, p. 153]{DS}, and infer that, if $f\in L^1$, then for every $\la>0$ one has
\begin{equation}\label{ergodic}
|\{X\in \RN\mid f^\star(X)> \la\}| \le \frac 2\lambda \int_{\{X\in \RN\mid |f(X)|> \la/2\}} |f(X)| dX \le \frac 2\lambda ||f||_1,
\end{equation}
where we have let 
\[
f^\star(X) \overset{def}{=} \underset{t>0}{\sup}\ \left|\frac 1t \int_0^t P_s f(X) ds\right|.
\]
On the other hand, \eqref{maxpoisson} in Lemma \ref{L:improvement} gives
\begin{align*}
|\{X\in \RN\mid \Ma^\star f(X)> \la\}| \le |\{X\in \RN\mid f^\star(X)> \la/A\}| \le \frac{2A}\la ||f||_1,
\end{align*}
where in the second inequality we have used \eqref{ergodic}.

(b) We observe that from (ii) in Lemma \ref{L:if} we trivially have 
\[
\Ma^\star : L^\infty\ \longrightarrow\ L^\infty,\ \ \ \ \ \text{with}\ \ \ \ \  ||\Ma^\star||_{L^\infty\to L^\infty} \le 1.
\]
By (a) and the theorem of real interpolation of Marcinckiewicz (see \cite[Chap. 1, Theor. 5]{Stein}), we conclude that (b) is true for some $A_p>0$.

\end{proof}


\section{The fractional integration operator $\Ia$}\label{S:fi}

In the classical theory of Hardy-Littlewood-Sobolev the M. Riesz'  operator of fractional integration plays a pivotal role. We recall, see \cite{R} and also \cite[Chap. 5]{Stein}, that given a number $0<\alpha<N$, the latter is defined by the formula
\begin{equation}\label{pot}
I_\alpha f(X) = \frac{\G(\frac{N-\alpha}2)}{2^\alpha \pi^{\frac N2} \G(\frac{\alpha}2)} \int_{\RN} \frac{f(Y)}{|X-Y|^{N-\alpha}} dy.
\end{equation}
The essential feature of such operator is that it provides the inverse of the fractional powers of the Laplacian, in the sense that for any $f\in \So$ one has
$f = I_\alpha \circ (-\Delta)^{\alpha/2} f$. Its role in the Hardy-Littlewood theory is perhaps best highlighted by the following interpolating inequality which goes back to \cite[Chapter 5]{Stein}, see also \cite{He}. Suppose $1\le p<n/\alpha$ and that $f\in L^p$. Then, one has for any $\ve>0$,
\begin{equation}\label{keyes}
|I_\alpha f(x)| \le C(n,\alpha,p)\left(Mf(x) \ve^{\alpha} + ||f||_p\ \ve^{-(\frac np - \alpha)}\right).
\end{equation}
The usefulness of the inequality \eqref{keyes} is multi-faceted. One the one hand, when $p>1$, combined with the strong $\Lp$ continuity of the maximal operator, it shows that $I_\alpha : \Lp \to L^q$, provided that $1/p - 1/q = \alpha/n$. On the other hand, \eqref{keyes}  allows to immediately establish the geometric weak end-point result $W^{1,1}\  \hookrightarrow\ L^{\frac{n}{n-1},\infty}$. 
This implies, in turn, the isoperimetric inequality $P(E) \ge C_n |E|^{\frac{n}{n-1}}$ and, equivalently, the strong geometric Sobolev embedding, $BV\  \hookrightarrow\ L^{\frac{n}{n-1}}$,
where $P(E)$ denotes De Giorgi's perimeter and $BV$ the subspace of $L^1$ of functions with bounded variation (for these aspects we refer to \cite{CDGcag}, where these ideas were developed in the general framework of Carnot-Carth\'eodory spaces).

In this section, we use the Poisson semigroup $\{\Po_z\}_{z>0}$ in Definition \ref{D:ps} to introduce, in our setting, the counterpart of the potential operators \eqref{pot}, see Lemma \ref{L:altexIa}.  Theorem \ref{T:inverse} is the first main result of the section. It shows that the operator $\mathscr I_{2s}$ inverts the nonlocal operator $\As$. In the next definition the reader needs to keep in mind the number $D_\infty$ in Definition \ref{D:hld}.

\begin{definition}\label{D:fi}
Let $0< \alpha < D_\infty$. Given $f\in \So$, we define the \emph{Riesz potential} of order $\alpha$ as follows
\[
\Ia f(X) = \frac{1}{\G(\alpha/2)} \int_0^\infty t^{\alpha/2 - 1} P_t f(X) dt.
\]
\end{definition}

Let us observe that for every $X\in \RN$ the integral in Definition \ref{D:fi} converges absolutely. To see this we write 
\[
\int_0^\infty t^{\alpha/2 - 1} P_t f(X) dt = \int_0^1 t^{\alpha/2 - 1} P_t f(X) dt + \int_1^\infty t^{\alpha/2 - 1} P_t f(X) dt.
\]
The integral on $[0,1]$ is absolutely convergent for any $\alpha>0$ since, using (ii) in Lemma \ref{L:Pt}, we can bound $|P_t f(X)|\le ||P_t f||_\infty \le ||f||_\infty$. For the integral on $[1,\infty)$ we use the ultracontractivity of $P_t$ in Proposition \ref{P:Koneinfty}, which gives for any $X\in \RN$ and $t>0$,
\[
\int_1^\infty t^{\alpha/2 - 1} |P_t f(X)| dt \le c_{N} ||f||_1 \int_1^\infty  \frac{t^{\alpha/2 - 1}}{V(t)} < \infty,
\]
since $0<\alpha < D_\infty$. In the next lemma, using Bochner's subordination, we recall a useful alternative expression of the potential operators $\Ia$ based on the Poisson semigroup $\{\Po_z\}_{z>0}$. 

\begin{lemma}\label{L:altexIa}
Let $0<\alpha<D_\infty$. For any $f\in \So$ one has
\[
\Ia f(X) = \frac{1}{\G(\alpha)} \int_0^\infty z^{\alpha - 1} \Po_z f(X) dz.
\]
\end{lemma}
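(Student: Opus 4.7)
The plan is to start from the right-hand side and apply the subordination formula \eqref{altrep1} to $\Po_z f(X)$, then swap the order of integration via Fubini, and finally evaluate the inner $z$-integral with a gamma-function substitution. Concretely, I would write
\[
\frac{1}{\G(\alpha)}\int_0^\infty z^{\alpha-1}\Po_z f(X)\,dz
= \frac{1}{\G(\alpha)\sqrt{4\pi}}\int_0^\infty z^{\alpha-1}\int_0^\infty \frac{z}{t^{3/2}}e^{-\frac{z^2}{4t}}P_t f(X)\,dt\,dz,
\]
and interchange the order of the two integrals. Setting $u=z^2/(4t)$ in the inner integral gives
\[
\int_0^\infty z^\alpha\, e^{-\frac{z^2}{4t}}\,dz \;=\; 2^\alpha\, t^{(\alpha+1)/2}\,\G\!\left(\tfrac{\alpha+1}{2}\right),
\]
so after collecting the powers of $t$ one obtains
\[
\frac{1}{\G(\alpha)}\int_0^\infty z^{\alpha-1}\Po_z f(X)\,dz \;=\; \frac{2^{\alpha}\,\G\!\left(\frac{\alpha+1}{2}\right)}{\G(\alpha)\sqrt{4\pi}}\int_0^\infty t^{\alpha/2-1}P_t f(X)\,dt.
\]
The proof is then completed by the Legendre duplication formula $\G(\alpha)=2^{\alpha-1}\pi^{-1/2}\G(\alpha/2)\G\!\left(\tfrac{\alpha+1}{2}\right)$, which reduces the constant prefactor to $1/\G(\alpha/2)$ and yields exactly $\Ia f(X)$ as in Definition \ref{D:fi}.

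The only genuine issue is the justification of Fubini. The integrand is nonnegative up to the sign of $P_t f(X)$, so it suffices to control $\int_0^\infty\!\!\int_0^\infty z^{\alpha}t^{-3/2}e^{-z^2/(4t)}|P_t f(X)|\,dz\,dt$. Performing the $z$-integral first gives a constant multiple of $\int_0^\infty t^{\alpha/2-1}|P_t f(X)|\,dt$, and this last integral is finite for every $f\in\So$ by the very argument given immediately after Definition \ref{D:fi}: on $[0,1]$ one uses the $L^\infty$ contractivity of $P_t$ from Lemma \ref{L:Pt}(ii), while on $[1,\infty)$ one uses the ultracontractivity estimate of Proposition \ref{P:Koneinfty} together with the condition $0<\alpha<D_\infty$, which makes $\int_1^\infty t^{\alpha/2-1}V(t)^{-1}dt$ converge by the very definition of $D_\infty$. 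Thus Tonelli applies to the absolute value, legitimising Fubini on the signed integrand, and the identity follows.

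I do not expect any serious obstacle beyond a careful bookkeeping of the gamma constants; the duplication formula is the one place where the exponents $\alpha$ of the Poisson representation and $\alpha/2$ of the heat representation are reconciled.
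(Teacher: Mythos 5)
Your proposal is correct and follows essentially the same route as the paper: apply the subordination formula \eqref{altrep1}, interchange the integrals, compute the Gaussian $z$-integral, and reconcile the constants via the Legendre duplication formula. Your explicit Tonelli justification (reducing absolute convergence to the estimate already established after Definition \ref{D:fi} using ultracontractivity and $\alpha<D_\infty$) is a welcome detail that the paper leaves implicit.
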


\begin{proof}
We have from \eqref{altrep1}
\begin{align*}
& \int_0^\infty z^{\alpha - 1} \Po_z f(X) dz = \frac{1}{\sqrt{4\pi}} \int_0^\infty z^{\alpha - 1} \int_0^\infty \frac{z}{t^{3/2}} e^{-\frac{z^2}{4t}} P_t f(X) dt dz
\\
& = \frac{1}{\sqrt{4\pi}} \int_0^\infty \frac{1}{t^{3/2}}\left(\int_0^\infty z^{\alpha+1} e^{-\frac{z^2}{4t}} \frac{dz}z\right) P_t f(X) dt 
\\
& = \frac{2^{\alpha-1}\G(\frac{\alpha+1}{2})}{\sqrt \pi} \int_0^\infty t^{\alpha/2 -1} P_t f(X) dt = \frac{2^{\alpha-1}\G(\frac{\alpha+1}{2})\G(\alpha/2)}{\sqrt \pi}\ \Ia f(X) 
\\
& = \G(\alpha) \Ia f(X), 
\end{align*}
where in the last equality we have used, with $x = \alpha/2$, the well-known duplication formula for the gamma function $2^{2x-1} \G(x) \G(x+1/2) = \sqrt \pi \G(2x),$
see e.g. \cite[formula (1.2.3)]{Le}.

\end{proof}

The next basic result plays a central role for the remainder of this paper. It shows that the integral operator $\Ia$ is the inverse of the nonlocal operator $(-\sA)^{\alpha/2}$.

\begin{theorem}\label{T:inverse}
Suppose that \eqref{trace} hold, and let $0<s<1$. Then, for any $f\in \So$ we have
\[
f = \mathscr I_{2s} \circ \As f = \As \circ \mathscr I_{2s} f.
\]
\end{theorem}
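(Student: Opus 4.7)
The strategy is to use the semigroup representations of both $\mathscr I_{2s}$ and $(-\mathscr A)^s$ to reduce the composition to the fundamental theorem of calculus along $\{P_v\}_{v>0}$, via an Euler Beta--function identity. The hypothesis \eqref{trace} enters decisively at the very last step through Corollary \ref{C:ultra}.

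I would start by establishing an alternative form of Balakrishnan's formula \eqref{As}: for $g\in\So$,
\[
(-\mathscr A)^s g(X)\;=\;\frac{1}{\Gamma(1-s)}\int_0^\infty u^{-s}(-\mathscr A)P_u g(X)\,du.
\]
This is obtained by inserting $P_\tau g-g=\int_0^\tau \mathscr A P_\sigma g\,d\sigma$ (from Lemma \ref{L:invS}(b)) into \eqref{As} and swapping the order of integration, which is legitimate since $\|\mathscr A P_\sigma g\|_\infty\le\|\mathscr A g\|_\infty$ by Lemma \ref{L:Pt}(ii) and $s\in(0,1)$.

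Applying the above identity with $g=P_t f$, using the commutation of $\mathscr A$ with $P_t$ (Lemma \ref{L:invS}(c)) and Chapman--Kolmogorov (Lemma \ref{L:Pt}(v)), I obtain
\[
(-\mathscr A)^s P_t f(X)\;=\;\frac{1}{\Gamma(1-s)}\int_0^\infty u^{-s}(-\mathscr A)P_{t+u}f(X)\,du,
\]
and substitution into the definition of $\mathscr I_{2s}$ then yields
\[
\mathscr I_{2s}(-\mathscr A)^s f(X)\;=\;\frac{1}{\Gamma(s)\Gamma(1-s)}\int_0^\infty\!\!\int_0^\infty t^{s-1}u^{-s}(-\mathscr A)P_{t+u}f(X)\,du\,dt.
\]
The change of variable $v=t+u$, combined with the Euler Beta identity $\int_0^v t^{s-1}(v-t)^{-s}\,dt=B(s,1-s)=\Gamma(s)\Gamma(1-s)$, collapses this (modulo a Fubini justification) to $\int_0^\infty (-\mathscr A)P_v f(X)\,dv$.

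To finish, since $(-\mathscr A)P_v f=-\tfrac{d}{dv}P_v f$ by Lemma \ref{L:invS}(b), the integral telescopes to $f(X)-\lim_{R\to\infty}P_R f(X)=f(X)$, where the vanishing of the limit is precisely the content of Corollary \ref{C:ultra}. The reverse identity $(-\mathscr A)^s\mathscr I_{2s}f=f$ can then be obtained either by repeating the same computation with the roles of $t$ and $u$ swapped, or, more directly, by applying the identity just proved to $(-\mathscr A)f\in\So$ after noting that $\mathscr A$ commutes with $\mathscr I_{2s}$ on $\So$. The main technical obstacle is the Fubini exchange in the penultimate step: by Proposition \ref{P:boom}(i), in the worst case we only have $V(v)\gtrsim v$ as $v\to\infty$, so the double integral is borderline non-absolutely-convergent in the tail. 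I would handle this by truncating the $u$-integral to a bounded interval, applying Fubini on the resulting finite region, and then passing to the limit by combining the uniform bound $\|\mathscr A P_v f\|_\infty\le\|\mathscr A f\|_\infty$ near $v=0$, the ultracontractive estimate of Proposition \ref{P:Koneinfty} for large $v$, and the telescoping structure $(-\mathscr A)P_v f=-\tfrac{d}{dv}P_v f$ to control the residual contribution.
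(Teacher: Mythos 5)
Your proposal is correct, and it reaches the conclusion by a route that is a genuine variant of the one in the paper. The paper starts from the \emph{resolvent} form of Balakrishnan's formula, $(-\sA)^s f=\tfrac{\sin \pi s}{\pi}\int_0^\infty \la^{s-1}(I-\la R(\la,\sA))f\,d\la$, unwinds $R(\la,\sA)$ through the Laplace transform of the semigroup (Lemma \ref{L:specter}), and after the substitutions $t=\tau u$ and $\rho=\tau(1+u)$ lands on the constant $\int_0^\infty \tfrac{u^{s-1}}{1+u}\,du=\tfrac{\pi}{\sin\pi s}$. You instead work with the ``heat-semigroup'' form $(-\sA)^s g=\tfrac{1}{\G(1-s)}\int_0^\infty u^{-s}(-\sA)P_u g\,du$, obtained from \eqref{As} by the fundamental theorem of calculus along $P_\tau$, and collapse the resulting double integral with the Euler Beta identity $\int_0^v t^{s-1}(v-t)^{-s}\,dt=\G(s)\G(1-s)$. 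These are two faces of the same reflection formula, and both arguments funnel into the identical endgame $\int_0^\infty(-\sA)P_v f\,dv=f$, where \eqref{trace} enters through Corollary \ref{C:ultra}. What your route buys is that it bypasses the resolvent machinery entirely and makes the Beta-function mechanism explicit; what the paper's route buys is that the extra exponential factor $e^{-\la\tau}$ keeps each individual integral manifestly convergent before the substitutions. Your treatment of the tail is in fact the more scrupulous of the two: you correctly observe that when $V(v)\gtrsim v$ is sharp the double integral is only conditionally convergent at infinity, and your truncation-plus-telescoping device (integrating by parts in $v$ against the truncated Beta weight and using $\|P_vf\|_\infty\to 0$) does close that issue.

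One small repair: the justification you give for the \emph{first} interchange (deriving the alternative Balakrishnan formula) is too weak as stated. The bound $\|\sA P_\sigma g\|_\infty\le\|\sA g\|_\infty$ yields
\begin{equation*}
\int_0^\infty \tau^{-(1+s)}\int_0^\tau \|\sA P_\sigma g\|_\infty\,d\sigma\,d\tau\ \le\ \|\sA g\|_\infty\int_0^\infty \tau^{-s}\,d\tau,
\end{equation*}
which diverges at $\tau=\infty$ for $s<1$, so Tonelli does not apply directly on the whole quadrant. The fix is exactly one of the two devices you already use elsewhere: either integrate by parts in $\tau$ in \eqref{As} (using $\|P_\tau g-g\|_\infty\le 2\|g\|_\infty$ to kill the boundary term at infinity and reading the $u$-integral as an improper integral), or invoke the ultracontractive decay $\|\sA P_u g\|_\infty=\|P_{u/2}(\sA P_{u/2}g)\|_\infty\lesssim V(u/2)^{-1}\|\sA g\|_1\lesssim u^{-1}$ from Propositions \ref{P:Koneinfty} and \ref{P:boom}, which makes $\int_0^\infty u^{-s}\|\sA P_u g\|_\infty\,du$ absolutely convergent. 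With that adjustment the argument is complete.
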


\begin{proof}
We only prove the first equality, the second is established similarly. It will be useful in what follows to adopt the following alternative expression, see \cite{B}, of the nonlocal operator \eqref{As}
\begin{align}\label{bala2}
(-\mathscr A)^s f(X) & = \frac{\sin(\pi s)}{\pi} \int_{0}^{\infty} \lambda^{s-1} R(\lambda, \mathscr A) (-\mathscr A)f(X) d\la 
\\
& = \frac{\sin(\pi s)}{\pi} \int_{0}^{\infty} \la^{s-1} (I - \la R(\la,\sA)) f(X) d\la,
\notag
\end{align}
where we have denoted by $R(\lambda, \mathscr A)= (\la I - \mathscr A)^{-1}$ the resolvent of $\sA$ in $L^\infty_0$ (we are now identifying $\sA$ with $\sA_\infty$, the infinitesimal generator of $\{P_t\}_{t>0}$ in $L^\infty_0$, see Remarks \ref{R:infty}, \ref{R:id} and Lemma \ref{L:specter}). We remark that either one of the integrals in the right-hand side of \eqref{bala2} converge in $\Li$. For instance, in the first integral there is no issue near $\la = 0$ since $s>0$, whereas (3) in Lemma \ref{L:specter} gives $\lambda^{s-1} ||R(\lambda, \mathscr A) (-\mathscr A)f||_\infty \le \lambda^{s-2} ||\mathscr A f||_\infty$, which is convergent near $\infty$. Keeping in mind that by (2) in Lemma \ref{L:specter} we have $R(\la,\sA) f = \int_0^\infty e^{-\la t} P_t f dt$,
we can alternatively express \eqref{bala2} as follows
\begin{equation}\label{bala3}
(-\mathscr A)^s f(X)  = \frac{\sin(\pi s)}{\pi} \int_0^\infty \int_{0}^{\infty} \lambda^{s-1} e^{-\la \tau} P_\tau (-\mathscr A)f(X) d\la d\tau.
\end{equation}
If we now combine Definition \ref{D:fi} with \eqref{bala3}, we find
\begin{align*}
 \mathscr I_{2s}\left((-\mathscr A)^s f\right)(X) & =\frac{\sin(\pi s)}{\pi\G(s)} \int_0^\infty\left(\int_{0}^{\infty}{\lambda^s e^{-\lambda\tau} \left( \int_{0}^{\infty}t^sP_{t+\tau}(-\mathscr A)f(X)\frac{dt}{t}\right)}\frac{d\lambda}{\lambda}\right)d\tau \\
&=\frac{\sin(\pi s)}{\pi\G(s)} \int_0^\infty\int_{0}^{\infty} u^s P_{\tau(1+u)}(-\mathscr A) f(X)\left(\int_{0}^{\infty}{\lambda^s e^{-\lambda\tau}\tau^s \frac{d\lambda}{\lambda}}\right)\frac{du}{u}d\tau\\
& =\frac{\sin(\pi s)}{\pi} \int_0^\infty u^s \int_{0}^{\infty}  P_{\tau(1+u)}(-\mathscr A) f(X)\,d\tau\frac{du}{u}
\\
& = - \frac{\sin(\pi s)}{\pi} \int_0^\infty \frac{u^{s-1}}{1+u} du \int_0^\infty P_\rho  \sA f(X) d\rho,
\end{align*}
where in the innermost integral we have made the change of variables $\rho= \tau (1+ u)$. We notice that one can justify the above relations by a standard application of Fubini and Tonelli theorems once we recognize that, for large $t$, the ultracontractivity and the fact that $D_\infty\geq 2>2s$ ensure the right summability properties. We now make the key observation that (b) and (c) in Lemma \ref{L:invS} and the assumption \eqref{trace} imply, in view of Corollary \ref{C:ultra},
\[
\int_0^\infty \sA P_\rho f(X) d\rho = \int_0^\infty \frac{d}{d\rho} P_\rho f(X) d\rho = - f(X).
\]
In order to reach the desired conclusion we are only left with observing, see e.g. \cite[3.123 on p.105]{T}, that $\int_0^\infty \frac{u^{s-1}}{1+u} du = \Gamma(s)\Gamma(1-s)=\frac{\pi}{\sin(\pi s)}$. 
 
\end{proof}

 
 \section{An intrinsic embedding theorem of Sobolev type}\label{S:sobolev}
 
In this section we prove our main embedding of Sobolev type, Theorem \ref{T:sob}. Our strategy follows the classical approach to the subject. We first establish the key Hardy-Littlewood-Sobolev type result, Theorem \ref{T:main}. With such tool in hands, we are easily able to obtain the Sobolev embedding, Theorem \ref{T:sob}. We note that these results do not tell the whole story since, as noted in Remark \ref{R:twods}, their main assumption \eqref{vol} implies necessarily that $D_0\le D_\infty$. But we have seen in Ex.4 in fig.\ref{fig} that there exist operators of interest in physics for which we have instead $D_0>D_\infty$. These cases are handled by Theorems \ref{T:mainsum} and \ref{T:sobsum}. Since we will need to have in place all the results from the previous sections, hereafter we assume without further mention  that the assumption \eqref{trace}
be in force. Our first result shows a basic property of the Poisson semigroup. 

\begin{lemma}[Ultracontractivity of $\Po_z = e^{z \sqrt{-\sA}}$]\label{L:Pzuc}
Suppose that there exist numbers $D, \gamma_D>0$ such that for every $t>0$ one has
\begin{equation}\label{vol}
V(t) \ge \gamma_D\ t^{D/2}.
\end{equation}
If $1\le p<\infty$ one has for $f\in \Lp$, $X\in \RN$ and any $z>0$,
\[
|\Po_z f(X)|\le \frac{C_1}{z^{D/p}}\ ||f||_p,
\]
where $C_1 = C_1(N,D,p)>0$. 
\end{lemma}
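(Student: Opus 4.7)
The plan is to derive the $L^p \to L^\infty$ bound for $\mathscr{P}_z$ directly from the analogous bound for the H\"ormander semigroup $P_t$ via Bochner's subordination formula \eqref{altrep1}. Concretely, I would start from
\[
\mathscr{P}_z f(X) = \frac{1}{\sqrt{4\pi}} \int_0^\infty \frac{z}{t^{3/2}} e^{-z^2/(4t)}\, P_t f(X)\, dt,
\]
take absolute values inside, and insert the ultracontractivity estimate of Proposition \ref{P:Koneinfty}, namely $|P_t f(X)| \le c_{N,p} V(t)^{-1/p} \|f\|_p$. Under the standing hypothesis \eqref{vol}, the denominator is controlled by $V(t)^{1/p} \ge \gamma_D^{1/p}\, t^{D/(2p)}$, so the problem reduces to a one-variable Gaussian-type integral.

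The second step is then a scaling computation: one must estimate
\[
\int_0^\infty \frac{z}{t^{3/2 + D/(2p)}} e^{-z^2/(4t)}\, dt.
\]
The natural substitution $u = z^2/(4t)$ turns this into a Gamma integral, and a bookkeeping of the powers of $z$ yields precisely the factor $z^{-D/p}$ together with a finite constant proportional to $\Gamma\!\left(\tfrac{D}{2p} + \tfrac{1}{2}\right)$. Collecting everything produces a constant $C_1 = C_1(N,D,p)$ with the claimed bound $|\mathscr{P}_z f(X)| \le C_1 z^{-D/p} \|f\|_p$.

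There is no genuine obstacle here, since all the analytic work has already been done: the subordination representation \eqref{altrep1} and the ultracontractivity of $\{P_t\}_{t>0}$ are available, and the volume hypothesis \eqref{vol} is exactly what is needed to make the resulting integral converge at $t=0$. The only point that deserves a brief justification is the interchange of the integral in $t$ with the $L^p$ pairing (needed to write $|\mathscr{P}_z f(X)|$ as an integral of pointwise values of $P_t f$), but this is standard since $\mathscr{P}(X,\cdot,z) \in L^{p'}$ by \eqref{pk0} and \eqref{pY}, so one may apply Fubini or simply integrate the pointwise inequality $|\mathscr{P}_z f(X)| \le \frac{1}{\sqrt{4\pi}} \int_0^\infty \frac{z}{t^{3/2}} e^{-z^2/(4t)} |P_t f(X)|\, dt$ that follows from the nonnegativity of the subordinator kernel.
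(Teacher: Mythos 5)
Your proposal is correct and is essentially identical to the paper's own proof: subordination formula \eqref{altrep1}, insertion of the ultracontractive bound from Proposition \ref{P:Koneinfty}, the volume hypothesis \eqref{vol}, and the change of variable $u=z^2/(4t)$ reducing the integral to a Gamma function times $z^{-D/p}$. The only cosmetic remark is that the constant also depends on $\gamma_D$ (as the paper itself records at the end of its proof).
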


\begin{proof}
From \eqref{altrep1}, Proposition \ref{P:Koneinfty} and \eqref{vol} we find 
 \begin{align*}
|\Po_z f(X)| & \le \frac{c_{N,p}}{\sqrt{4\pi}} ||f||_p \int_0^\infty \frac{z}{t^{1/2}V(t)^{1/p}} e^{-\frac{z^2}{4t}} \frac{dt}t 
 \le C ||f||_p \int_0^\infty \frac{z}{t^{\frac{1}{2}(\frac Dp + 1)}} e^{-\frac{z^2}{4t}} \frac{dt}t  =  C_1\ ||f||_p\ z^{-D/p},
\end{align*}
where $C_1 = C_1(N,D,\gamma_D,p)>0$.

\end{proof}

\begin{remark}\label{R:twods}
Keeping Definitions \ref{D:D0} and \ref{D:hld} in mind, the reader should note that the assumption \eqref{vol} implies necessarily that $D_0 \le D\le D_\infty$. Thus, the case $D_0>D_\infty$ is left out, but it will be addressed in Theorems \ref{T:mainsum} and \ref{T:sobsum}.
\end{remark}

The next proposition contains an essential interpolation estimate which generalises to the degenerate non-symmetric setting of \eqref{K0} the one in \cite{V85}, see also \cite{VSC}. Such tool represents the semigroup replacement of the Stein-Hedberg estimate \eqref{keyes}.

\begin{proposition}\label{P:keyest}
Assume \eqref{vol}, and let $0<\alpha<D$. Given $1\le p<D/\alpha$ there exist a constant $C_2  = C_2(N,D,\alpha,\gamma_D,p)>0$, such that for every $f\in \So$ and $\ve>0$
\begin{equation}\label{wow}
|\Ia f(X)| \le \frac{1}{\G(\alpha+1)} \Ma^\star f(X)\ \ve^\alpha + C_2\ ||f||_p\ \ve^{\alpha - \frac Dp},
\end{equation}
where $\Ma^\star$ is as in \eqref{pm}.
\end{proposition}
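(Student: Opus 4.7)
The natural approach is to split the Poisson-subordinated representation of $\Ia$ from Lemma \ref{L:altexIa} at the threshold $\ve$, and then bound the two pieces by the two available tools: the radial maximal function $\Ma^\star$ on the small-$z$ side, and the $L^p\to L^\infty$ ultracontractivity of $\{\Po_z\}_{z>0}$ just proved in Lemma \ref{L:Pzuc} on the large-$z$ side. This is the semigroup analogue of the classical Hedberg trick \eqref{keyes}.

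Concretely, I would start from
\[
\Ia f(X) \;=\; \frac{1}{\G(\alpha)} \int_0^\ve z^{\alpha-1}\Po_z f(X)\,dz \;+\; \frac{1}{\G(\alpha)} \int_\ve^\infty z^{\alpha-1}\Po_z f(X)\,dz,
\]
which is legitimate for $f\in\So$ since the integral converges absolutely (a fact already observed right after Definition \ref{D:fi}, using that $\alpha<D_\infty$ and hence $\alpha<D$). For the first piece, the definition \eqref{pm} of $\Ma^\star$ gives $|\Po_z f(X)|\le \Ma^\star f(X)$ for every $z>0$, so
\[
\left|\frac{1}{\G(\alpha)}\int_0^\ve z^{\alpha-1}\Po_z f(X)\,dz\right|
\le \frac{\Ma^\star f(X)}{\G(\alpha)}\int_0^\ve z^{\alpha-1}\,dz
= \frac{\ve^\alpha}{\G(\alpha+1)}\Ma^\star f(X).
\]

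For the second piece, I apply Lemma \ref{L:Pzuc}, which under the assumption \eqref{vol} yields $|\Po_z f(X)|\le C_1 z^{-D/p}\|f\|_p$. This is exactly where the hypothesis $p<D/\alpha$ enters, because it guarantees $\alpha-1-D/p<-1$, so the tail integral converges:
\[
\left|\frac{1}{\G(\alpha)}\int_\ve^\infty z^{\alpha-1}\Po_z f(X)\,dz\right|
\le \frac{C_1\|f\|_p}{\G(\alpha)}\int_\ve^\infty z^{\alpha-1-D/p}\,dz
= \frac{C_1}{\G(\alpha)\,(D/p-\alpha)}\,\|f\|_p\,\ve^{\alpha-D/p}.
\]
Setting $C_2 = \frac{C_1}{\G(\alpha)(D/p-\alpha)}$ (which depends on $N,D,\alpha,\gamma_D,p$ through $C_1$) and adding the two estimates gives \eqref{wow}.

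There is no genuine obstacle: the only subtle point is keeping track of why the split works, namely that $\alpha>0$ ensures integrability at $0$ of $z^{\alpha-1}$, while $\alpha<D/p$ (equivalently $p<D/\alpha$) ensures integrability at infinity of $z^{\alpha-1-D/p}$. Both the maximal-function bound and the ultracontractivity estimate are pointwise in $X$, so the resulting inequality is genuinely pointwise, which is what is needed in the sequel to combine it with Theorem \ref{T:maximal} and deduce the Hardy-Littlewood-Sobolev mapping properties of $\Ia$.
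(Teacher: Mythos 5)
Your proof is correct and is essentially identical to the paper's: the same split of the subordinated representation from Lemma \ref{L:altexIa} at $z=\ve$, the same bound of the near piece by $\Ma^\star f$, and the same use of Lemma \ref{L:Pzuc} with the condition $p<D/\alpha$ to make the tail integral converge. The constant you obtain matches the paper's $C_2$.
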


\begin{proof}
We begin by noting that we know from (3) in Remark \ref{R:volumes} that $D_\infty \ge D$, and thus $\Ia$ is well defined for all $0<\alpha < D$. Now, for a given $f\in \So$ using Lemma \ref{L:altexIa} we write  for every $\ve>0$ 
\begin{equation}\label{split}
|\Ia f(X)| \le  \frac{1}{\G(\alpha)} \int_0^\ve z^{\alpha - 1} |\Po_z f(X)| dz + \frac{1}{\G(\alpha)} \int_\ve^\infty z^{\alpha - 1} |\Po_z f(X)| dz.
\end{equation}
The first term is easily controlled by the estimate
\begin{equation}\label{split1}
\frac{1}{\G(\alpha)} \int_0^\ve z^{\alpha - 1} |\Po_z f(X)| dz \le \frac{1}{\G(\alpha+1)} \Ma^\star f(X) \ve^\alpha.
\end{equation} 
Let now $1\le p<D/\alpha$. To control the second term we use Lemma \ref{L:Pzuc} to  obtain
$$\frac{1}{\G(\alpha)} \int_\ve^\infty z^{\alpha - 1} |\Po_z f(X)| dz \le \frac{C_1}{\G(\alpha)} ||f||_p \int_\ve^\infty z^{\alpha - D/p - 1} dz = C_2\ ||f||_p\ \ve^{\alpha - \frac Dp},$$
where $C_2 = C_2(N,D,\alpha,\gamma_D,p)>0$. Combining this estimate with \eqref{split1} and \eqref{split}, we conclude that \eqref{wow} holds. 

\end{proof}

With Proposition \ref{P:keyest} in hands, we can now establish the first main result of this section.

\begin{theorem}[of Hardy-Littlewood-Sobolev type]\label{T:main}
Assume that \eqref{vol} be valid. Then, we have:
(i) for every $0<\alpha<D$ the operator $\Ia$ maps $L^1$ into $L^{\frac{D}{D-\alpha},\infty}$. Furthermore, there exists $S_1 = S_1(N,D,\alpha,\gamma_D)>0$ such that for any $f\in L^1$ one has
\begin{equation}\label{weakL1}
\underset{\la>0}{\sup}\ \la\ |\{X\in \RN\mid |\Ia f(X)|>\la\}|^{\frac{D-\alpha}{D}} \le S_1 ||f||_{1};
\end{equation}
(ii) for every $1<p<D/\alpha$ the operator $\Ia$ maps $L^p$ to $L^q$, with $\frac 1p - \frac 1q = \frac{\alpha}D$. Moreover, there exists $S_p = S_p(N,D,\alpha,\gamma_D,p)>0$ such that one has for any $f\in L^p$ 
\begin{equation}\label{strong}
||\Ia f||_{q} \le S_p ||f||_{p}.
\end{equation}
\end{theorem}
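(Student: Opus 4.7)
The plan is to extract a pointwise bound for $|\Ia f(X)|$ by optimizing the Hedberg-type interpolation estimate \eqref{wow} of Proposition \ref{P:keyest} in the free parameter $\ve$, and then to feed the resulting bound into the mapping properties of the Poisson maximal function $\Ma^\star$ established in Theorem \ref{T:maximal}. This is the semigroup analogue of the classical Stein--Hedberg argument, so structurally the two parts of the theorem follow the same template, the difference being only which end-point bound on $\Ma^\star$ one invokes.

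For part (ii), take $1<p<D/\alpha$, fix $X\in\RN$, and balance the two terms on the right-hand side of \eqref{wow} by choosing $\ve = \ve(X)$ so that $\Ma^\star f(X)\ve^\alpha \sim \|f\|_p\ve^{\alpha-D/p}$, i.e.\ $\ve^{D/p} \sim \|f\|_p/\Ma^\star f(X)$. Substituting back yields the pointwise inequality
\[
|\Ia f(X)| \le C\, \Ma^\star f(X)^{1-\alpha p/D}\, \|f\|_p^{\alpha p/D}.
\]
The Hardy--Littlewood--Sobolev relation $1/p-1/q = \alpha/D$ rewrites as $1-\alpha p/D = p/q$, so raising to the $q$-th power and integrating gives $\|\Ia f\|_q^q \le C^q\, \|\Ma^\star f\|_p^p\, \|f\|_p^{q-p}$. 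The strong $L^p$-continuity of $\Ma^\star$ in part (b) of Theorem \ref{T:maximal} (available because $p>1$) then produces \eqref{strong} with $S_p$ depending on $p,N,D,\alpha,\gamma_D$.

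For part (i) the argument is similar but uses the weak-type end-point of $\Ma^\star$. I would take $p=1$ in \eqref{wow}, fix $\la>0$, and choose $\ve$ so as to absorb the second term: set $C_2\|f\|_1\, \ve^{\alpha-D} = \la/2$, i.e.\ $\ve = (2C_2\|f\|_1/\la)^{1/(D-\alpha)}$. Then $|\Ia f(X)|>\la$ forces $\Ma^\star f(X) > c\,\la/\ve^\alpha = c'\,\la^{D/(D-\alpha)}\|f\|_1^{-\alpha/(D-\alpha)}$. Applying the weak $L^1$ bound $|\{\Ma^\star f>\mu\}| \le (2A/\mu)\|f\|_1$ from part (a) of Theorem \ref{T:maximal} and simplifying exponents (noting $1+\alpha/(D-\alpha)=D/(D-\alpha)$) gives
\[
|\{X\in\RN : |\Ia f(X)|>\la\}| \le S_1^{D/(D-\alpha)}\, \la^{-D/(D-\alpha)}\, \|f\|_1^{D/(D-\alpha)},
\]
which is precisely \eqref{weakL1} after taking the $(D-\alpha)/D$-th power and multiplying by $\la$.

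Since Proposition \ref{P:keyest} already does the real analytic work (it packages the two scales at which the Poisson subordination interacts with the volume lower bound \eqref{vol}), and Theorem \ref{T:maximal} provides both end-point bounds on $\Ma^\star$, no genuine obstacle remains: the proof is a bookkeeping of exponents. The only point deserving care is the admissibility of the range $0<\alpha<D$: by remark (3) in Remark \ref{R:volumes}, the hypothesis \eqref{vol} forces $D\le D_\infty$, so $\Ia$ is well defined by Definition \ref{D:fi} for every $\alpha$ in this range, which is what legitimises the application of Lemma \ref{L:altexIa} inside Proposition \ref{P:keyest}.
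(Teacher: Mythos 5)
Your proposal is correct and follows essentially the same route as the paper: both parts optimize the Hedberg-type estimate \eqref{wow} in $\ve$ (balancing the two terms for the strong bound, absorbing the second term into $\la$ for the weak endpoint) and then invoke parts (b) and (a) of Theorem \ref{T:maximal} respectively, with the exponent bookkeeping carried out exactly as you describe. The only cosmetic difference is the placement of the threshold ($\la$ versus $2\la$) in the weak-type argument, which is immaterial.
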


\begin{proof}
(i) Suppose $f\in L^1$, with $||f||_1\not= 0$ (otherwise, there is nothing to prove). The estimate \eqref{wow} reads in this case
\begin{equation}\label{wow1}
|\Ia f(X)| \le \frac{1}{\G(\alpha+1)} \Ma^\star f(X)\ \ve^\alpha + C_2\ ||f||_1\ \ve^{\alpha - D}.
\end{equation}
Given $\la>0$ we choose $\ve>0$ such that $C_2 ||f||_1 \ve^{\alpha - D} = \la$.
With such choice we see from \eqref{wow1}
that
\begin{align*}
& |\{X\in \RN\mid |\Ia f(X)|> 2 \la\}| \le |\{X\in \RN\mid \frac{1}{\G(\alpha+1)} \Ma^\star f(X)\ \ve^\alpha > \la\}|
 \le \frac{2A \ve^\alpha}{\la \G(\alpha+1)}  ||f||_1,
\end{align*}
where in the last inequality we have used (a) in Theorem \ref{T:maximal}. Keeping in mind that from our choice of $\ve$ we have 
$\ve^\alpha = \frac{(C_2 ||f||_1)^{\frac{\alpha}{D-\alpha}}}{\la^{\frac{\alpha}{D-\alpha}}}$,
we conclude that \eqref{weakL1} holds. 

To prove (ii), we suppose now that $1<p<D/\alpha$. Minimising with respect to $\ve$ in \eqref{wow} we easily find for some constant $C_3 = C_3(N,D,\alpha,\gamma_D,p)>0$
\begin{equation}\label{wow3}
|\Ia f(X)| \le C_3 \Ma^\star f(X)^{1-\frac{\alpha p}{D}} ||f||_p^{\frac{\alpha p}D}.
\end{equation}
The desired conclusion \eqref{strong} now follows from \eqref{wow3} and from (b) in Theorem \ref{T:maximal}.

\end{proof}

Theorem \ref{T:main} is the keystone on which the second main result of this section leans. Before stating it, we emphasise that in view of (iii) in Remark \ref{R:volumes} we know that $D_\infty \ge 2$. Therefore, if $0<s<1$ then $2s < 2 <D_\infty$.

\begin{theorem}[of Sobolev type]\label{T:sob}
Suppose that \eqref{vol} hold. Let $0<s< 1$. Given $1\leq p<D/2s$ let $q>p$ be such that
$\frac 1p - \frac 1q = \frac{2s}D$.
\begin{itemize}
\item[(a)] If $p>1$ we have $\Lo\ \hookrightarrow\ L^{\frac{pD}{D-2sp}}.$ More precisely, there exists a constant $S_{p,s} >0$, depending on $N,D,s,\gamma_D, p$, such that for any $f\in \So$ one has 
\[
||f||_{q} \le S_{p,s} ||\As f||_p.
\] 
\item[(b)] When $p=1$ we have $\mathscr L^{2s,1}\ \hookrightarrow\ L^{\frac{D}{D-2s},\infty}.$ More precisely, there exists a constant $S_{1,s} >0$, depending on $N,D,s,\gamma_D$, such that for any $f\in \So$ one has
\[
\underset{\la>0}{\sup}\ \la |\{X\in \RN\mid |f(X)| > \la\}|^{1/q} \le S_{1,s} ||\As f||_{1}.
\]
\end{itemize}
\end{theorem}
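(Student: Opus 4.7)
The plan is to combine the inversion formula of Theorem \ref{T:inverse} with the Hardy--Littlewood--Sobolev bounds for the Riesz potential from Theorem \ref{T:main}, and then to pass from $\So$ to $\Lo$ by density.

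Fix $f\in\So$. First I would check that the range of parameters is admissible. Since $1\le p<D/(2s)$ forces $2s<D$, and since $0<s<1$ together with (iii) in Remark \ref{R:volumes} gives $2s<2\le D_\infty$, the Riesz potential $\Ia$ with $\alpha=2s$ falls under the scope of both Definition \ref{D:fi} and Theorem \ref{T:main}. By Lemma \ref{L:inclusion}, $(-\sA)^s f\in L^p$ so that the right-hand sides below make sense. Next, invoking Theorem \ref{T:inverse} (applied with the present $s$ in place of $s$ there, i.e.\ with exponent $\alpha=2s$), I obtain the pointwise identity
\[
f(X)\;=\;\mathscr I_{2s}\!\left((-\sA)^s f\right)(X),\qquad X\in\RN.
\]

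For part (a) with $1<p<D/(2s)$, I would plug $g=(-\sA)^s f\in L^p$ into the strong-type estimate \eqref{strong} of Theorem \ref{T:main}(ii) with $\alpha=2s$. Since $\tfrac{1}{p}-\tfrac{1}{q}=\tfrac{2s}{D}$ is precisely the Hardy--Littlewood--Sobolev relation, this yields
\[
\|f\|_q\;=\;\|\mathscr I_{2s} g\|_q\;\le\; S_p\,\|g\|_p\;=\;S_p\,\|(-\sA)^s f\|_p,
\]
with $S_{p,s}:=S_p$. For part (b) with $p=1$, the same scheme applied to the weak-type endpoint \eqref{weakL1} of Theorem \ref{T:main}(i) produces
\[
\sup_{\lambda>0}\lambda\,|\{X\in\RN\mid |f(X)|>\lambda\}|^{1/q}\;\le\; S_1\,\|(-\sA)^s f\|_1,
\]
with $q=D/(D-2s)$, establishing the weak $L^{q,\infty}$ bound on $\So$.

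It remains to promote these a priori estimates on $\So$ to all of $\Lo$. Given $f\in\Lo$, Definition \ref{D:sobolev} supplies $f_n\in\So$ with $f_n\to f$ in $L^p$ and $(-\sA)^s f_n\to (-\sA)^s f$ in $L^p$. The estimate just proved for $\So$, applied to $f_n-f_m$, shows that $\{f_n\}$ is Cauchy in $L^q$ (case (a)), or in $L^{q,\infty}$ (case (b), using the quasi-triangle inequality of the Lorentz quasi-norm). Passing to an a.e.\ convergent subsequence identifies the limit with $f$, and the inequalities pass to the limit. The only delicate point is the weak-type case (b), where one must be careful with the Lorentz quasi-norm, but this is a routine application of the Kolmogorov-type argument. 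I do not expect a genuine obstacle here: the core content of the theorem is entirely packaged in Theorems \ref{T:inverse} and \ref{T:main}, and the present statement is essentially a bookkeeping corollary of those two results.
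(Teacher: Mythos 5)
Your proposal is correct and follows exactly the paper's own route: the authors likewise prove Theorem \ref{T:sob} by writing $|f(X)|=|\mathscr I_{2s}\As f(X)|$ via Theorem \ref{T:inverse} and then invoking Theorem \ref{T:main} with $\alpha=2s$, leaving the remaining details (including the density passage from $\So$ to $\Lo$ that you spell out) to the reader. Your write-up simply fills in that routine bookkeeping.
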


\begin{proof}
We observe that (3) in Remark \ref{R:volumes} guarantees that $D\le D_\infty$, and therefore $\mathscr I_{2s}$ is well-defined.  At this point, 
the proof is easily obtained by combining Theorem \ref{T:inverse}, which allows to write for every $X\in \RN$
\[
|f(X)| = |\mathscr I_{2s} \As f(X)|,
\]
with Theorem \ref{T:main}. We leave the routine details to the interested reader. 

\end{proof}

From Remark \ref{R:twods} we know that Theorem \ref{T:main} does not cover situations, such as the Kramers' operator in Ex.4 in fig.\ref{fig}, in which $D_0 > D_\infty$. When this happens we have the following substitute result.
In the sequel, when we write $L^{q_1}+L^{q_2}$ we mean the Banach space of functions $f$ which can be written as $f=f_1+f_2$ with $f_1\in L^{q_1}$ and $f_2\in L^{q_2}$, endowed with the norm
$$||f||_{L^{q_1}+L^{q_2}} = \inf_{f=f_1+f_2,\\ f_1\in L^{q_1},\,f_2\in L^{q_2}}{||f_1||_{L^{q_1}}+||f_2||_{L^{q_2}}}.$$

\begin{theorem}\label{T:mainsum}
Suppose there exist $\gamma>0$ such that for every $t>0$ one has
\begin{equation}\label{vol2}
V(t) \ge \gamma \min\{ t^{D_0/2},t^{D_\infty/2} \}.
\end{equation} 
Then, we have: (i) for every $0<\alpha<D_\infty<D_0$ the operator $\Ia$ maps $L^1$ into $L^{\frac{D_0}{D_0-\alpha},\infty} + L^{\frac{D_\infty}{D_\infty-\alpha},\infty}$. Furthermore, there exists $S_1 = S_1(N,D_0,D_\infty,\alpha,\gamma)>0$ such that for any $f\in L^1$ one has
\begin{equation}\label{weakL1sum}
\min\{\underset{\la>0}{\sup}\ \la\ |\{X\mid |\Ia f(X)|>\la\}|^{\frac{D_0-\alpha}{D_0}}, \underset{\la>0}{\sup}\ \la\ |\{X\mid |\Ia f(X)|>\la\}|^{\frac{D_\infty-\alpha}{D_\infty}} \}\le S_1 ||f||_{L^1};
\end{equation}
(ii) for every $1<p<D_\infty/\alpha<D_0/\alpha$ the operator $\Ia$ maps $L^p$ to $L^{q_0}+L^{q_\infty}$, with $\frac 1p - \frac 1q_0 = \frac{\alpha}D_0$ and $\frac 1p - \frac 1q_\infty = \frac{\alpha}D_\infty$. Moreover, there exists $S_p = S_p(N,D_0,D_\infty,\alpha,\gamma,p)>0$ such that one has for any $f\in L^p(\RN)$ 
\begin{equation}\label{strongsum}
||\Ia f||_{L^{q_0}+L^{q_\infty}} \le S_p ||f||_{L^p}.
\end{equation}
\end{theorem}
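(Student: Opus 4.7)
The plan is to mimic the proof of Theorem~\ref{T:main}, but to first split the operator $\Ia$ itself, using Bochner subordination, into a ``small-$z$'' part controlled by $D_0$ and a ``large-$z$'' part controlled by $D_\infty$. Concretely, from Lemma~\ref{L:altexIa} one writes
\[
\Ia f = \Ia^{(0)} f + \Ia^{(\infty)} f, \quad \Ia^{(0)} f(X) = \frac{1}{\G(\alpha)}\int_0^1 z^{\alpha-1}\Po_z f(X)\,dz, \quad \Ia^{(\infty)} f(X) = \frac{1}{\G(\alpha)}\int_1^\infty z^{\alpha-1}\Po_z f(X)\,dz,
\]
and aims at showing $\Ia^{(0)}:L^p\to L^{q_0}$ and $\Ia^{(\infty)}:L^p\to L^{q_\infty}$ for part (ii), together with the corresponding weak-type endpoints at $p=1$ for part (i); once these are in hand, the decomposition yields the desired mapping of $\Ia$ into $L^{q_0}+L^{q_\infty}$ (respectively the weak-$L^1$ analogue).

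The necessary preparatory step is a split analogue of Lemma~\ref{L:Pzuc}: cutting the Bochner integral \eqref{altrep1} at $t=1$ and using \eqref{vol2} on each piece, after the change of variable $s=z^2/(4t)$ one obtains
\[
|\Po_z f(X)|\le C\,z^{-D_0/p}\,||f||_p \ \ (0<z\le 1),\qquad |\Po_z f(X)|\le C\,z^{-D_\infty/p}\,||f||_p \ \ (z\ge 1).
\]
Applying the Hedberg-Stein argument to $\Ia^{(0)}$: splitting the defining integral at $\varepsilon\in(0,1]$, using the Poisson maximal function $\Ma^\star$ on $(0,\varepsilon)$ and the first ultracontractivity estimate on $(\varepsilon,1)$, produces
\[
|\Ia^{(0)} f(X)|\le c\,\Ma^\star f(X)\,\varepsilon^\alpha + C\,||f||_p\,\varepsilon^{\alpha-D_0/p},
\]
which is legitimate because $p<D_\infty/\alpha<D_0/\alpha$. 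The Hedberg-optimal choice $\varepsilon^\star=(||f||_p/\Ma^\star f(X))^{p/D_0}$ lies in $(0,1]$ exactly on $A:=\{\Ma^\star f\ge ||f||_p\}$, where it yields the pointwise bound $|\Ia^{(0)} f(X)|\le C\,\Ma^\star f(X)^{1-\alpha p/D_0}||f||_p^{\alpha p/D_0}$; on the complement $B:=\{\Ma^\star f<||f||_p\}$ one must use instead the coarser estimate $|\Ia^{(0)} f(X)|\le C\,\Ma^\star f(X)$ together with $\Ma^\star f<||f||_p$. Exploiting the identity $q_0(1-\alpha p/D_0)=p$ and the strong $L^p$ boundedness of $\Ma^\star$ (Theorem~\ref{T:maximal}(b)), splitting $\int|\Ia^{(0)} f|^{q_0}=\int_A+\int_B$ gives $||\Ia^{(0)} f||_{q_0}\le C\,||f||_p$. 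A fully parallel argument on $\varepsilon\in[1,\infty)$, with the ``bad'' set $B':=\{\Ma^\star f>||f||_p\}$ (of uniformly bounded measure $|B'|\le C$ by Chebyshev) handled via the trivial direct estimate $|\Ia^{(\infty)} f(X)|\le\tilde C\,||f||_p$ (finite since $\alpha p<D_\infty$), delivers $||\Ia^{(\infty)} f||_{q_\infty}\le C\,||f||_p$ and concludes (ii).

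For the weak endpoint (i) the same decomposition works, with Theorem~\ref{T:maximal}(a) replacing (b) and the $\varepsilon$-optimisation performed in $\lambda$ exactly as in the proof of Theorem~\ref{T:main}(i); one obtains $\Ia^{(0)} f\in L^{q_0,\infty}$ and $\Ia^{(\infty)} f\in L^{q_\infty,\infty}$ with both quasi-norms $\le C\,||f||_1$, from which \eqref{weakL1sum} follows. The main delicate point throughout is precisely the case in which Hedberg's optimal $\varepsilon^\star$ falls outside the admissible range of integration (outside $(0,1]$ for $\Ia^{(0)}$, outside $[1,\infty)$ for $\Ia^{(\infty)}$); in each such regime one must verify that the coarser substitute bound -- proportional either to $\Ma^\star f$ or to $||f||_p$ -- still survives the Marcinkiewicz-type $L^{q_j}$-integration, which is where the interplay between the $L^p$-control on $\Ma^\star$ and the measure bound on the exceptional set becomes indispensable.
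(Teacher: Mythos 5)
Your proposal is correct, and it reaches the conclusion by a genuinely different organization of the same two ingredients (ultracontractivity of $\Po_z$ under \eqref{vol2} and Theorem \ref{T:maximal}). The paper keeps $\Ia$ whole: it first derives the single bound $|\Po_z f(X)|\le \bar C\,\|f\|_p\max\{z^{-D_0/p},z^{-D_\infty/p}\}$, runs one Hedberg optimisation with the piecewise function $g(\ve)=\int_\ve^\infty z^{\alpha-1}\max\{z^{-D_0/p},z^{-D_\infty/p}\}\,dz$, and only at the end splits the \emph{function} $\Ia f$ along the level set $\{A_f<1\}=\{\Ma^\star f>\bar C\|f\|_p\}$ to land one piece in $L^{q_0}$ and the other in $L^{q_\infty}$. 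You instead split the \emph{operator} at $z=1$ via subordination and run two clean one-power Hedberg arguments; the price is that in each half the optimal $\ve^\star$ can leave the admissible interval, and your treatment of those regimes is sound: for $\Ia^{(0)}$ the trivial bound $|\Ia^{(0)}f|\le \G(\alpha+1)^{-1}\Ma^\star f$ combined with $\Ma^\star f<\|f\|_p$ and $q_0(1-\alpha p/D_0)=p$ closes the $L^{q_0}$ estimate, while for $\Ia^{(\infty)}$ the uniform bound $|\Ia^{(\infty)}f|\le\tilde C\|f\|_p$ (valid since $\alpha p<D_\infty$) together with the Chebyshev bound $|\{\Ma^\star f>\|f\|_p\}|\le A_p^p$ does the job. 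Your route has the advantage of producing an explicit decomposition $\Ia f=\Ia^{(0)}f+\Ia^{(\infty)}f$ witnessing membership in the sum space (the paper's proof establishes a distributional estimate on $\Ia f$ and then decomposes by characteristic functions), at the cost of an extra exceptional-set argument in each half; the paper's single optimisation is more economical but requires inverting the piecewise function $g$. One small point of agreement worth noting: for part (i) your summation of the two weak-type bounds yields, for each fixed $\la$, the estimate $\min\{\la|\{|\Ia f|>\la\}|^{1/q_0'},\la|\{|\Ia f|>\la\}|^{1/q_\infty'}\}\le C\|f\|_1$ (the small-$\la$ regime controlled by the $D_\infty$ exponent and the large-$\la$ regime by the $D_0$ exponent), which is exactly the form the paper's own argument delivers for \eqref{weakL1sum}.
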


\begin{proof}
It suffices to prove the statements for $f\in\So$. Let $1\leq p<+\infty$ and $X\in\RN$. From the ultracontractive estimate in Proposition \ref{P:Koneinfty} and \eqref{vol2}, we obtain from \eqref{altrep1},
 \begin{align*}
|\Po_z f(X)| & \le \frac{c_{N,p}}{\sqrt{4\pi}} ||f||_p \int_0^\infty \frac{z}{t^{1/2}V(t)^{1/p}} e^{-\frac{z^2}{4t}} \frac{dt}t 
\\
& \le \frac{c_{N,p}}{\gamma^{\frac{1}{p}} \sqrt{4\pi}} ||f||_p \int_0^\infty \frac{z}{t^{1/2}\min\left\{ t^{D_0/2p},t^{D_\infty/2p}\right\}} e^{-\frac{z^2}{4t}} \frac{dt}t 
\\
&=C(N,p,\gamma) ||f||_p \int_0^\infty \sqrt{u}e^{-u}\max\big\{ \left(\frac{4u}{z^2}\right)^{D_0/2p},\left(\frac{4u}{z^2}\right)^{D_\infty/2p} \big\} \frac{du}u\\
& \leq  C(N,p,\gamma) ||f||_p \max\{ z^{-D_0/p},z^{-D_\infty/p}\}\int_0^\infty \sqrt{u}e^{-u}\max\left\{ \left(4u\right)^{D_0/2p},\left(4u\right)^{D_\infty/2p} \right\} \frac{du}u.
\end{align*}
For any $X\in \RN$ and $z>0$ we have thus proved 
\begin{equation}\label{poissonmaxz}
|\Po_z f(X)|  \le \bar{C}||f||_p \max\{ z^{-D_0/p},z^{-D_\infty/p} \},
\end{equation}
where $\bar{C} = C(N,p,\gamma,D_0,D_\infty)>0$. Next, let $0<\alpha<D_\infty<D_0$ and $1\le p<D_\infty/\alpha<D_0/\alpha$. As in \eqref{split} and \eqref{split1}, for any $X\in\RN$ we have
\begin{equation}\label{wowgeps}
|\Ia f(X)| \le\frac{1}{\G(\alpha+1)} \Ma^\star f(X) \ve^\alpha + \frac{1}{\G(\alpha)} \int_\ve^\infty z^{\alpha - 1} |\Po_z f(X)| dz.
\end{equation}
To bound the second integral we use \eqref{poissonmaxz} to find
$$\int_\ve^\infty z^{\alpha - 1} |\Po_z f(X)| dz\leq \bar{C}||f||_p\ g(\ve),$$
where
$$g(\ve)=\int_\ve^\infty h(z)\,dz=\int_\ve^\infty z^{\alpha - 1} \max\{ z^{-D_0/p},z^{-D_\infty/p} \}dz.$$
To see that $g(\ve)<\infty$ for all $\ve>0$, it suffices to look at $g(1)$:
$$g(1)=\int_1^\infty z^{\alpha - 1} \max\{ z^{-D_0/p},z^{-D_\infty/p} \}dz=\int_1^\infty z^{\alpha - 1 -D_\infty/p}\,dz<\infty$$
since we have assumed $p<\frac{D_\infty}{\alpha}$. Therefore, $g(\ve)$ is well-defined, $g\in C^1(0,\infty)$, and 
\[
g'(\ve)=-\ve^{\alpha-1}\max\{ \ve^{-D_0/p},\ve^{-D_\infty/p} \}<0,
\]
 which shows that $g$ is strictly decreasing. We also know that $\lim_{\ve\rightarrow+\infty} g(\ve)=0$. Furthermore, if $0<\ve<1$, then
$$g(\ve)=\int_\ve^1 h(z)\,dz + \int_1^{\infty} h(z)\,dz=\int_\ve^1 z^{\alpha-1-\frac{D_0}{p}}\,dz + g(1)=\frac{\ve^{-\left(\frac{D_0}{p}-\alpha\right)}}{\frac{D_0}{p}-\alpha}-\frac{1}{\frac{D_0}{p}-\alpha}+g(1).$$
We infer that $\lim_{\ve\rightarrow 0^+} g(\ve)=+\infty$. Thus $g:(0,\infty)\rightarrow(0,\infty)$ is invertible.\\
Going back to \eqref{wowgeps} we conclude
\begin{equation}\label{Iageps}
|\Ia f(X)| \le\frac{1}{\G(\alpha+1)} \Ma^\star f(X) \ve^\alpha + \frac{\bar{C}}{\G(\alpha)}||f||_p g(\ve)=:G(\ve) \qquad\forall \ve>0.
\end{equation}
To prove (ii) we look for the minimum of $G$ which is attained at some $\ve$ such that
$$\min\{\ve^{D_0/p},\ve^{D_\infty/p}\}=\bar{C}\frac{||f||_p}{\Ma^\star f(X)}=:A_{f}(X).$$
In other words
$$\ve_{min}=\max\{A_{f}(X)^{p/D_0},A_{f}(X)^{p/D_\infty}\}.$$
Going back to \eqref{Iageps} we conclude
$$|\Ia f(X)| \le\frac{1}{\G(\alpha+1)} \Ma^\star f(X) \max\left\{\left(\frac{\bar{C}||f||_p}{\Ma^\star f(X)}\right)^{\frac{\alpha p}{D_0}},\left(\frac{\bar{C}||f||_p}{\Ma^\star f(X)}\right)^{\frac{\alpha p}{D_\infty}}\right\} + \frac{\bar{C}}{\G(\alpha)}||f||_p\, g(\ve_{min}).$$
In the case $0<A_{f}(X)<1$, then we have
$$\max\left\{\left(\frac{\bar{C}||f||_p}{\Ma^\star f(X)}\right)^{\alpha p/D_0},\left(\frac{\bar{C}||f||_p}{\Ma^\star f(X)}\right)^{\alpha p/D_\infty}\right\}=\left(\frac{\bar{C}||f||_p}{\Ma^\star f(X)}\right)^{\alpha p/D_0},\quad\mbox{ and }$$
\begin{align*}
g(\ve_{min})&=\frac{1}{\frac{D_\infty}{p}-\alpha}-\frac{1}{\frac{D_0}{p}-\alpha}+\frac{1}{\frac{D_0}{p}-\alpha}\left(\frac{\bar{C}||f||_p}{\Ma^\star f(X)}\right)^{-\frac{p}{D_0}(\frac{D_0}{p}-\alpha)}\\
&\leq \frac{1}{\frac{D_\infty}{p}-\alpha}\left(\frac{\bar{C}||f||_p}{\Ma^\star f(X)}\right)^{-\frac{p}{D_0}(\frac{D_0}{p}-\alpha)}.
\end{align*}
We conclude that, if $A_{f}(X)<1$, then
$$|\Ia f(X)| \le C_1 \Ma^\star f(X)^{1-\frac{\alpha p}{D_0}} ||f||_p^{\frac{\alpha p}D_0}.$$
If instead $A_{f}(X)\geq 1$, we can show in a similar way that
$$|\Ia f(X)| \le C_2 \Ma^\star f(X)^{1-\frac{\alpha p}{D_\infty}} ||f||_p^{\frac{\alpha p}D_\infty}.$$
If we write
$$\Ia f=\Ia f \cdot \chi_{\{A_{f}<1\}} +\Ia f\cdot\chi_{\{A_{f}\geq 1\}},$$
we deduce from (b) in Theorem \ref{T:maximal} that $\Ia f\chi_{\{A_{f}<1\}}\in L^{q_0}$ and $\Ia f\chi_{\{A_{f}\geq 1\}}\in L^{q_\infty}$ with the relative bounds
$$||\Ia f\chi_{\{A_{f}<1\}}||_{q_0}\leq c' ||f||_p,\qquad ||\Ia f\chi_{\{A_{f}\geq 1\}}||_{q_\infty}\leq c'' ||f||_p.$$
This proves \eqref{strongsum}.\\
Let us turn to the proof of (i). Let $p=1$, $0<\alpha<D_\infty<D_0$, and suppose $||f||_1\neq 0$. Recalling \eqref{Iageps} and the invertibility of $g$, for every positive $\lambda$ we can pick $\ve>0$ such that $\frac{\bar{C}}{\G(\alpha)}||f||_1 g(\ve)=\lambda$. From (a) in Theorem \ref{T:maximal}, we then get
\begin{align}\label{fromhereinvert}
|\{X\in \RN\mid |\Ia f(X)|> 2 \la\}| &\le \left|\left\{X\in \RN\mid \frac{1}{\G(\alpha+1)} \Ma^\star f(X)\ \ve^\alpha > \la\right\}\right|\\
 &\le \frac{2A \ve^\alpha}{\la \G(\alpha+1)}  ||f||_1.\nonumber
\end{align}
We can compute explicitly the inverse of $g$ and find an expression for $\ve$. In fact, if $y$ belongs to the interval $(0,(D_\infty-\alpha)^{-1})$ we have $g^{-1}(y)=\left((D_\infty-\alpha) y\right)^{\frac{1}{\alpha-D_\infty}}$, otherwise we have $g^{-1}(y)=\left(1-\frac{D_0-\alpha}{D_\infty-\alpha}+(D_0-\alpha) y\right)^{\frac{1}{\alpha-D_0}}$. Hence, if $\frac{\lambda \G(\alpha)}{\bar{C}||f||_1}<(D_\infty-\alpha)^{-1}$, we deduce from \eqref{fromhereinvert} that
$$|\{X\in \RN\mid |\Ia f(X)|> 2 \la\}| \le \frac{2A}{\la \G(\alpha+1)}  ||f||_1 \left( \frac{\lambda (D_\infty-\alpha)\G(\alpha)}{\bar{C}||f||_1}\right)^{\frac{\alpha}{\alpha-D_\infty}}=C_m \left(\frac{||f||_1}{\lambda}\right)^{\frac{D_\infty}{D_\infty-\alpha}}.$$
On the other hand, if $\frac{\lambda \G(\alpha)}{\bar{C}||f||_1}\geq(D_\infty-\alpha)^{-1}$, we have
\begin{align*}
&|\{X\in \RN\mid |\Ia f(X)|> 2 \la\}| \le \frac{2A}{\la \G(\alpha+1)}  ||f||_1 \left(1-\frac{D_0-\alpha}{D_\infty-\alpha}+\frac{\lambda (D_0-\alpha) \G(\alpha)}{\bar{C}||f||_1}\right)^{\frac{\alpha}{\alpha-D_0}}\\
&=\frac{2A \bar{C}^{\frac{\alpha}{D_0-\alpha}}}{\G(\alpha+1)\left((D_0-\alpha) \G(\alpha)\right)^{\frac{\alpha}{D_0-\alpha}}}\left(\frac{||f||_1}{\lambda}\right)^{\frac{D_0}{D_0-\alpha}}\left(1- \frac{\bar{C}||f||_1}{\lambda \G(\alpha)}\left(\frac{1}{D_\infty-\alpha}- \frac{1}{D_0-\alpha}\right)\right)^{-\frac{\alpha}{D_0-\alpha}}\\
&\leq \frac{2A \bar{C}^{\frac{\alpha}{D_0-\alpha}}}{\G(\alpha+1)\left((D_0-\alpha) \G(\alpha)\right)^{\frac{\alpha}{D_0-\alpha}}}\left(\frac{D_\infty-\alpha}{D_0-\alpha}\right)^{-\frac{\alpha}{D_0-\alpha}}\left(\frac{||f||_1}{\lambda}\right)^{\frac{D_0}{D_0-\alpha}}=C_M \left(\frac{||f||_1}{\lambda}\right)^{\frac{D_0}{D_0-\alpha}}.
\end{align*}
In any case, for any $\lambda>0$, we get
$$\min\left\{ \la\ |\{X\mid |\Ia f(X)|>2\la\}|^{\frac{D_0-\alpha}{D_0}}, \la\ |\{X\mid |\Ia f(X)|>2\la\}|^{\frac{D_\infty-\alpha}{D_\infty}}\right\} \le S ||f||_{L^1}, $$
where $S_1=S_1(D_0, D_\infty, \alpha, A, \bar{C})>0$. This proves \eqref{weakL1sum}.

\end{proof}

Using Theorem \ref{T:mainsum}, we obtain the following substitute result for Theorem \ref{T:sob}. We leave it to the interested reader to fill the necessary details.

\begin{theorem}\label{T:sobsum}
Suppose that \eqref{vol2} hold. Let $0<s< 1$. Given $1\leq p<D_\infty/2s<D_0/2s$, let $q_\infty>q_0>p$ be such that
$\frac 1p - \frac{1}{q_\infty} = \frac{2s}{D_\infty}$, $\frac 1p - \frac{1}{q_0} = \frac{2s}{D_0}$.
\begin{itemize}
\item[(a)] If $p>1$ we have $\Lo \ \hookrightarrow\ L^{\frac{pD_\infty}{D_\infty-2sp}}+L^{\frac{pD_0}{D_0-2sp}}.$ More precisely, there exists a constant $S_{p,s} >0$, depending on $N,D_\infty, D_0, s,\gamma, p$, such that for any $f\in \So$ one has
\[
||f||_{L^{q_0}+L^{q_\infty}} \le S_{p,s} ||\As f||_p.
\] 
\item[(b)]  If instead $p = 1$, we have $\mathscr L^{2s,1}\ \hookrightarrow\ L^{\frac{D_0}{D_0-2s},\infty} + L^{\frac{D_\infty}{D_\infty-2s},\infty}.
$ More precisely, there exists a constant $S_{1,s} >0$, depending on $N,D_\infty, D_0, s,\gamma$, such that for any $f\in \So$ one has
\[
\min\{\underset{\la>0}{\sup}\ \la\ |\{X\mid |f(X)|>\la\}|^{\frac{1}{q_0}}, \underset{\la>0}{\sup}\ \la\ |\{X\mid | f(X)|>\la\}|^{\frac{1}{q_\infty}} \} \le S_{1,s} ||\As f||_{1}.
\]
\end{itemize}
\end{theorem}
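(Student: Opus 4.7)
The strategy is to derive Theorem \ref{T:sobsum} as a direct consequence of the inversion formula of Theorem \ref{T:inverse} paired with the Hardy--Littlewood--Sobolev mapping properties of $\mathscr{I}_{2s}$ established in Theorem \ref{T:mainsum}. The overall pattern mirrors the reduction used to deduce Theorem \ref{T:sob} from Theorem \ref{T:main}, except that here the target exponent splits into a $D_0$-branch $q_0$ and a $D_\infty$-branch $q_\infty$ because of the $\min$ in hypothesis \eqref{vol2}. Since $D_\infty\ge 2>2s$ (Remark \ref{R:volumes}(3)), the operator $\mathscr{I}_{2s}$ is defined on $\So$, and by Lemma \ref{L:inclusion} (using \eqref{trace}) we have $(-\sA)^s f\in L^p$ for all $f\in\So$ and all $1\le p\le\infty$.

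Fix $f\in\So$ and set $g:=(-\sA)^s f$. Theorem \ref{T:inverse} gives the pointwise identity $f=\mathscr{I}_{2s}g$ on $\RN$, so both the strong and weak estimates for $f$ reduce to the corresponding mapping properties of $\mathscr{I}_{2s}$ applied to $g\in L^p$. In case (a) the range $1<p<D_\infty/(2s)<D_0/(2s)$ is precisely the hypothesis of Theorem \ref{T:mainsum}(ii) with $\alpha=2s$, which yields a decomposition $\mathscr{I}_{2s}g=g_0+g_\infty$ with $g_0\in L^{q_0}$, $g_\infty\in L^{q_\infty}$ and $\|g_0\|_{q_0}+\|g_\infty\|_{q_\infty}\le S_p\|g\|_p$. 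Since $f=\mathscr{I}_{2s}g$, this same decomposition witnesses $f\in L^{q_0}+L^{q_\infty}$ with
$$\|f\|_{L^{q_0}+L^{q_\infty}}\ \le\ \|g_0\|_{q_0}+\|g_\infty\|_{q_\infty}\ \le\ S_{p,s}\,\|(-\sA)^s f\|_p,$$
which is the claimed estimate. In case (b), apply Theorem \ref{T:mainsum}(i) with $\alpha=2s$ to $g\in L^1$; the $\min$-of-weak-norms bound \eqref{weakL1sum} transfers verbatim to $f=\mathscr{I}_{2s}g$, giving the required inequality with constant $S_{1,s}$.

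The global embeddings $\Lo\hookrightarrow L^{q_0}+L^{q_\infty}$ and $\mathscr{L}^{2s,1}\hookrightarrow L^{\frac{D_0}{D_0-2s},\infty}+L^{\frac{D_\infty}{D_\infty-2s},\infty}$ then follow by extension via the density of $\So$ in $\Lo$ (Definition \ref{D:sobolev}): for a Cauchy sequence $(f_n)\subset\So$ in the graph norm $\|\cdot\|_{D_{p,s}}$, the sequence $(-\sA)^s f_n$ is Cauchy in $L^p$, so Steps above produce uniform decompositions $f_n=g_{0,n}+g_{\infty,n}$ which are Cauchy in $L^{q_0}$ and $L^{q_\infty}$ respectively; the limit provides the required element of the target Banach space and the continuity of the embedding.

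The proof is in essence a mechanical corollary of Theorems \ref{T:inverse} and \ref{T:mainsum}, so no genuine analytic difficulty arises. The only point requiring attention, and arguably the main obstacle, is the bookkeeping of the two exponent branches $q_0,q_\infty$ throughout the decomposition argument, since the single-exponent reduction that works in Theorem \ref{T:sob} is no longer available when $D_0>D_\infty$: one must carry a genuine sum decomposition of $\mathscr{I}_{2s}g$ rather than a pointwise bound, and verify that the norm inequality for sums (rather than for single $L^q$ spaces) is preserved under the density extension.
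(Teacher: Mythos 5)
Your proof is correct and follows exactly the route the paper intends: the paper gives no details for this theorem, stating only that it is obtained from Theorem \ref{T:mainsum} in the same way Theorem \ref{T:sob} is obtained from Theorems \ref{T:inverse} and \ref{T:main}, which is precisely your argument. (One cosmetic point: in the density step the individual components $g_{0,n}$, $g_{\infty,n}$ need not themselves be Cauchy, since the decomposition in Theorem \ref{T:mainsum} is nonlinear in $g$; but applying the estimate to $f_n-f_m$ shows $(f_n)$ is Cauchy in the Banach space $L^{q_0}+L^{q_\infty}$, which suffices.)
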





\bibliographystyle{amsplain}

\end{document}